\newlength\epitextskip
\pretocmd{\@epitext}{\em}{}{}
\apptocmd{\@epitext}{\em}{}{}
\patchcmd{\epigraph}{\@epitext{#1}\\}{\@epitext{#1}\\[\epitextskip]}{}{}
\newcommand{\bigslant}[2]{{\left.\raisebox{.2em}{$#1$}\middle/\raisebox{-.2em}{$#2$}\right.}}
\newcommand{\beq}{\begin{equation}} \newcommand{\eeq}{\end{equation}}
\newcommand{\R}{\mathbb{R}} \newcommand{\C}{\mathbb{C}}
\newcommand{\moduli}{\mathcal{M}}
\newcommand{\T}{\textbf{T}}
\newcommand{\K}{\mathbb{K}}
\newcommand{\Z}{\mathbb{Z}}  
\newcommand{\p}{\mathbb{P}}
\newcommand{\embedd}{\hookrightarrow}
\newcommand{\sph}{\textbf{S}}
\newcommand{\G}{\mathcal{G}}
\newcommand{\twoparteq}[2]
{
	\left\{
	\begin{array}{ll}
		#1  \\
		#2
	\end{array}
	\right.
}
\newcommand{\circdist}{1}  
\newcommand{\circrad}{7/4} 
\pgfmathsetmacro{\intrad}{sqrt((\circrad)^2 - 3*(\circdist)^2/4) - \circdist/2}
\pgfmathsetmacro{\extrad}{sqrt((\circrad)^2 - 3*(\circdist)^2/4) + \circdist/2}
\definecolor{darkgreen}{RGB}{0,156,59}
\definecolor{darkyellow}{RGB}{255,233,0}
\definecolor{darkblue}{RGB}{0,39,118}
\colorlet{180}{darkblue}
\colorlet{60}{darkyellow}
\colorlet{300}{darkgreen}
\newtheorem{theorem}{Theorem}[section]
\newtheorem{corollary}[theorem]{Corollary}
\newtheorem{lemma}[theorem]{Lemma}
\newtheorem{proposition}[theorem]{Proposition}
\theoremstyle{definition}
\newtheorem{definition}[theorem]{Definition}
\theoremstyle{remark}
\newtheorem{remark}[theorem]{Remark}
\newtheorem{example}[theorem]{Example}
\numberwithin{equation}{section}
\newcommand{\mf}{\mathbf}
\newcommand{\Hom}{\textrm{Hom}}
\newcommand{\ol}{\overline}
\newcommand{\ul}{\underline}
\newcommand{\til}{\widetilde}
\begin{document}

\title{Harmonic Higgs Bundles and Coassociative ALE Fibrations}

\author{Rodrigo Barbosa%
\thanks{\texttt{rbarbosa@scgp.stonybrook.edu}}}
\affil{Simons Center for Geometry and Physics, Stony Brook University}
\date{}
\maketitle


\begin{abstract}
	Inspired by a string duality, we construct a deformation family for $G_2$-orbifolds given as total spaces of coassociative fibrations by ADE singularities over a closed and oriented smooth three-manifold $Q$. The deformations are parametrized by sections of a fiber bundle on $Q$ that can be interpreted as spectral/cameral covers associated to certain Riemannian analogs of Higgs bundles. The spectral cover picture is a unifying framework for several different approaches to coassociative fibrations appearing in the literature. Our construction generalizes, to the context of $G_2$-geometry, a well-known family of ADE-fibered Calabi-Yau threefolds whose deformations are parametrized by spectral covers of holomorphic Higgs bundles on its base Riemann surface.
\end{abstract}

\tableofcontents

\section{Introduction}

In \cite{szendroi}, Szendr\H{o}i defined and studied a deformation family for a Calabi-Yau threefold $X$ containing a curve $\Sigma$ of ADE singularities. Later, noticing that the deformations are parametrized by spectral curves of Higgs bundles on $\Sigma$, Diaconescu, Donagi and Pantev \cite{ddp} showed that, indeed, complex structure deformations are controlled by the Hitchin system of $\Sigma$: the integrable system associated to the Jacobian fibration of the family is isomorphic to the Hitchin system of $\Sigma$. This correspondence first arose in the context of B-model geometric transitions on $X$, where the Hitchin system describes the large $N$ limit of a system of holomorphic branes wrapping the exceptional cycles in a resolution of $X$ \cite{hofman}.

A natural question is: can one build a similar picture when $X$ is replaced by a $G_2$-orbifold $M_0$ containing a three-manifold of ADE singularities $Q$? This is not idle analogy, as from a physical perspective one expects the answer to be positive. This is because of a well-known duality between M-theory and Type IIA Superstring theory: M-theory compactified on $M_0$ is dual to a brane system on $Q$, and in the low energy limit the worldvolume theory of the system is described by a gauge theory on $Q$ given by the following equations \cite{modulifixing} \cite{pw}:

 \begin{align} \begin{array}{ c c l l l}
	F_A & = & -[\theta\wedge\theta]  \\
	D_A\theta & = & 0 \\
	D_A^{\dagger_k} \theta & = & 0
	\end{array} \label{pwequations} \end{align}                                            
To explain the meaning of these equations, fix a finite ADE group $\Gamma \leq SU(2)$, $\mathfrak{g}_c$ the complex simple Lie algebra McKay-dual to $\Gamma$, $G_c$ its complex reductive Lie group and $G \leq G_c$ the compact real form with Lie algebra $\mathfrak{g}$. Then, for a principal $G_c$-bundle $\mathcal{P}_{G_c} \to Q$, $k$ is a metric on $\mathcal{P}_{G_c}$ inducing a principal  $G$-subbundle $\mathcal{P}_G \subset \mathcal{P}_{G_c}$, $F_A$ is the curvature of a $G$-connection $A$ on $Ad(\mathcal{P}_{G_c})$ that preserves $k$, $\theta \in \Omega^1(Q, Ad(\mathcal{P}_G)^\perp )$ is a real analogue of a Higgs field, and $D_A^\dagger = \star D_A \star$ is the $k$-adjoint of $D_A$.  

``Duality'' in this context means that we expect the moduli spaces of the two theories to be isomorphic. The moduli space of M-theory parametrizes ``complexified'' $G_2$-structures on (a desingularization of) $M_0$. Hence we expect that deformations of the $G_2$-structure can be somehow captured in the moduli space of solutions to \ref{pwequations}.
	
	Equations \ref{pwequations} have been known since the seminal works of Donaldson \cite{pato} and Corlette \cite{corlette}. They were re-derived in \cite{modulifixing} and \cite{pw} in the context of the aforementioned duality via a dimensional reduction of Hermitian-Yang-Mills instantons on $T^*Q$, which is the gauge-theoretic description of the dual Type IIA theory. See \cite{acharya} for foundational work on this, and \cite{sakura}, \cite{galerona} for recent developments. Following terminology in physics, we will refer to the first two equations in \ref{pwequations} as the \emph{F-terms}, and the third equation as the \emph{D-term}.
	
The F-terms are equivalent to the condition that the $G_c$-connection $\mathbb{A} := A+i\theta$ is flat. Donaldson and Corlette showed that whenever the monodromy representation of $\mathbb{A}$ has reductive Zariski closure, one can solve the D-term within the $G_c$-gauge orbit of $\mathbb{A}$. Moreover, if $\mathbb{A}$ is irreducible, the solution is unique. This establishes a bijection: 

\begin{align} \bigslant{ \left\{ \begin{array}{ c c l l l}
	F_A + [\theta\wedge\theta] & = 0 \\
	D_A\theta & = 0 \\
	D_A^{\dagger_k} \theta & = 0
	\end{array} \right\} }{\mathcal{G}^k} & \cong \bigslant{ \left\{ F_{\mathbb{A}} = 0 \right\}^{\text{irred}} }{\mathcal{G}^\C} \label{coin} \end{align}
	where $\mathcal{G}^\C$ is the group of automorphisms of $E$ seen as a $G_c$-vector bundle, and $\mathcal{G}^k$ is the subgroup that preserves the metric $k$.

This result can be understood as an infinite-dimensional version of the Kempf-Ness theorem equating symplectic quotients with GIT quotients \cite{kempfness}. Here we interpret the D-term as a moment map condition for the action of $\mathcal{G}^k$ on the space of all complex connections on $E$. Denoting by $\widetilde{Q}$ the universal cover of $Q$, a hermitian metric $k$ on $E$ can be seen as a section of an associated $\pi_1(Q)$-bundle $\widetilde{Q}\times_{\rho_\mathbb{A}}G_c/G$, where $\rho_\mathbb{A}$ is the monodromy representation of $\mathbb{A}$. Then $k$ solves \ref{coin} if and only if the associated section extremizes the $L^2$ energy, and for this reason such a metric is called \emph{harmonic}.  Accordingly, we will refer to a solution of \ref{pwequations} as a \emph{harmonic Higgs bundle}.


In this paper, we will construct a deformation family of \emph{closed} $G_2$-structures for a $G_2$-orbifold fibered by coassociative ADE singularities:

\beq \C^2/\Gamma \embedd M_0 \to Q \label{tulia} \eeq
i.e., $M_0$ has a closed $G_2$-structure $\varphi$ supported outside the zero section such that $\varphi|_{\C^2/\Gamma} = 0$. We will show that the deformations preserving the structure of a coassociative fibration are parametrized by geometric objects called \emph{spectral covers} (or, more generally, \emph{cameral covers}) associated to \emph{commuting} solutions of \ref{pwequations}, i.e. those satisfying 

\beq [\theta\wedge\theta]=0 \label{comutano} \eeq 

Let us explain in more detail. Abstractly, a \emph{cameral structure}\footnote{In \cite{gaitsgory}, this is called a \emph{Higgs structure}. Unfortunately, that terminology is not convenient for our purposes, as we consider solutions of \ref{pwequations} to be the basic objects. Since \cite{gaitsgory} only concerns \emph{holomorphic} Higgs bundles, our choice will lead to no confusion.} for $\mathcal{P}_{G}$ is a smooth section $s$ of regular $G$-centralizers of the Grassmannian bundle $Gr(r,Ad(\mathcal{P}_G)) \to Q$, where $r$ is the rank of $\mathfrak{g}$. In other words, its image consists of a family of abelian subalgebras  of the form $\mathfrak{c}_\mathfrak{g}(x)$, where $x \in \mathfrak{g}$ is regular. Since $G$ is simple, any regular element is semi-simple (i.e., diagonalizable) and hence $s$ defines a sub-bundle $\mathfrak{H} \subset Ad(\mathcal{P}_G)$ of Cartan subalgebras $\mathfrak{h} \subset \mathfrak{g}$. Crucially, even though every fiber of $Ad(\mathcal{P}_G)$ can be identified with $\mathfrak{g}$, there is no canonical identification between the fibers of $\mathfrak{H}$ to a fixed Cartan $\mathfrak{h}$. Now, since $G$ is compact and simple, the local components $(\theta_1,\theta_2,\theta_3)$ of any $\theta \in \Omega^1(Q,Ad(\mathcal{P}_G))$ are semi-simple. Furthermore, if $\theta$ satisfies \ref{comutano}, then at every point $q \in Q$ there is $g_q \in G$ that conjugates $(\theta_1(q),\theta_2(q),\theta_3(q))$ to $\mathfrak{H}_q$. Since an adjoint orbit intersects a Cartan in a Weyl orbit, the element $g_q$ is only well-defined up to the action of $W$ on $\mathfrak{H}_q$. Thus, we can see our $\theta$ as an element of $\Omega^1(Q,\mathfrak{H}/W)$, or equivalently, a section $\theta : Q \to T^*Q\otimes\mathfrak{H}/W$. 

Denoting by $\mathcal{E} := T^*Q\otimes \mathfrak{H}$, there is a covering map $e: \mathcal{E} \to \mathcal{E}/W$. Consider the pullback $c_\theta = \theta^*(e)$. It defines a Galois $|W|$-to-$1$ cover $c_\theta : \widetilde{Q}_\theta \to Q$ called the \emph{cameral cover} of $\theta$. By choosing a linear representation of $G$, one can construct the cameral cover geometrically as the space of ordered eigenvalues of $\theta$. There is also an induced \emph{spectral cover} parametrizing (unordered) eigenvalues of $\theta$ in that representation. Geometrically, a spectral cover is a Lagrangian subspace of $p: T^*Q \to Q$ such that the restriction $p|_{s(Q)} : s(Q) \to Q$ is a branched finite cover of degree $|\Gamma|$.

Roughly speaking, $\theta$ defines a $G_2$-deformation $M_\theta$ of $M_0$ as follows: a Higgs field $\theta \in \Omega^1(Q,\mathfrak{H}/W)$ can be interpreted as a $G$-invariant, $SO(3)$-equivariant map:

\[ \theta : Fr_Q \times \mathcal{P}_G \to Z \]
into the parameter space $Z \cong \R^3\otimes \mathfrak{h}/W$ of smoothings of $\C^2/\Gamma$, where the volumes of exceptional spheres are dictated by the eigenvalues of $\theta$. Therefore, $\theta$ picks up a profile of smoothings of $\C^2/\Gamma$ over $Q$, and the construction depends only on the cameral cover of $\theta$. We then show that the remaining equations in \ref{pwequations} imply that $M_\theta$ carries a coassociative fibration to $Q$. Note that the singular set of $M_\theta$ consists exactly of the locus in $Q$ where $\theta$ fails to be regular. This set has measure zero and in fact is expected to have codimension at least two in $Q$. This is consistent with physical considerations, since codimension six and seven singularities on $G_2$-manifolds generate chiral fermions in four dimensions \cite{acharyawitten}.

In order for the construction to be useful, one needs to know examples of $M_0$ as above. We will be interested in situations where $M_0 := \mf{V}/\Gamma$, with $\mf{V}$ the total space of a rank $1$ quaternionic vector bundle over $Q$ with a $\Gamma$-action and a compatible connection. Then a fibration as in \ref{tulia} will have a $G_2$-structure $\varphi_0$ whenever it admits a structure we call a \emph{twisting section}. In a nutshell, this consists of a $\Gamma$-invariant global section $\eta_0$ of $T^*Q \otimes \Lambda^{2,+}T^*_{\text{vert}}\mf{V}$, and $\varphi_0$ will be closed or co-closed whenever $\eta_0$ is. Examples of this kind are easiest to come by when the metric connection $\delta$ is \emph{flat}, and although our main construction works in greater generality, all examples known to the author are of this form. Several such examples will be described in section \ref{adeg2platy}, including one that works for $\Gamma = \Z_n$ for any $n$.

The fundamental group of compact flat three-manifolds are extensions of $\Z^3$ by a finite group $H_\delta$, and this special nature makes the relevant moduli space - the \emph{character variety}, or more generally, the \emph{character stack} - an interesting algebraic space that often can be explicitly computed. In such cases, the resulting $G_2$-spaces have infinite fundamental group, so their holonomy cannot be upgraded from $SU(2)\rtimes H_\delta$ to $G_2$ - in fact, such spaces often appear in local models for \emph{compact} $G_2$-manifolds, as in Joyce's foundational work \cite{joyce}. Nevertheless, our main point here is that the dual description involving character varieties and spectral covers should provide a new method for constructing more interesting examples - e.g., taking $Q=\sph^3$ and $\Gamma=\Z_2$, if one can construct a spectral cover intersecting $Q$ along a knot $K \subset \sph^3$, then on the $G_2$ side one should get a local model for a coassociative Kovalev-Lefschetz fibration \cite{donaldson} \cite{kovalev}.

Before we state our main result, let us describe a somewhat simple example. Let $\G_6$ be the (unique up to affine isomorphism) compact orientable flat Riemannian $3$-manifold with holonomy $\K := \Z_2\times \Z_2 = \langle \alpha, \beta \rangle$. One can construct a flat $\K$-bundle of $A_1$-singularities $N_0 \to \G_6$ and a $G_2$-structure $\varphi$ on $N_0$ making the fibers coassociative (see example \ref{exemplodobob} below). According to Joyce \cite{joyce} there are three topologically distinct smoothings $Y_i$ of $N_0$ that retain this condition\footnote{I.e., whose fibers admits a $\K$-action asymptotic to the original action on $M_0$.}:

\begin{enumerate}
\item $Y_1$, obtained by blowing up the fibers. The exceptional curve is a $\C\p^1$ and the $\K$-action is such that $\alpha$ reverses orientation but $\beta$ does not.
\item $Y_2$, obtained by a deformation of $\C^2/\Z_2$ replacing the singularity by a totally real $\sph^2$. The $\K$-action is such that both $\alpha$ and $\beta$ reverse orientation.
\item $Y_3$, obtained by a deformation of $\C^2/\Z_2$ replacing the singularity by a totally imaginary $\sph^2$, is diffeomorphic to $Y_2$. Here only $\beta$ reverses orientation.
\end{enumerate} 

Each of the three smoothings defines a one-parameter family of $G_2$-structures parametrized by the volume of the exceptional sphere. Thus, the local moduli space of $G_2$-deformations $\moduli_{G_2}$ is given by three copies of $\R_+$ touching at a point. Away from the orbifold point, the M-theory moduli space $\moduli^\C_{G_2}$ is a torus fibration\footnote{When $M$ is a smooth compact $G_2$-manifold, Karigiannis and Leung \cite{leung} proved that $\moduli^\C_{G_2}$ is in fact K\"ahler and the fibration is Lagrangian.} $\moduli^{\C}_{G_2} \to \moduli_{G_2}$. Thus, the connected component of $\moduli^\C_{G_2}$ probed by Joyce consists of three copies of $\C$ touching at a point.

On the other hand, if the duality is to be believed, one would expect that the $SL(2,\C)$-character variety of $\G_6$ also contains a component consisting of three copies of $\C$ touching at a point. We will show in section \ref{duality} that this is in fact the case, and that moreover the other components consist of three isolated points - conjecturally corresponding to three yet unknown rigid $G_2$-orbifolds.

To state our main result, we need to introduce a class of orbifolds with $G_2$-structure whose deformations are to be described by harmonic Higgs bundles. We call the relevant objects \emph{coassociative ADE fibrations} (see definition \ref{coassade}). Essentially, it consists of an orbibundle $\overline{p} : M_0 \to Q$ obtained as a global quotient $M_0 = \text{tot}(\mf{V})/\Gamma$ of a $Spin(4)$-vector bundle $p: \mf{V} \to Q$ with some additional structure: notably, a \emph{twisting form} $\eta_0 \in \Omega^1(Q,\Lambda^{2,+}\mf{V})$ and a $\eta_0$-compatible Ehresmann connection $\mf{H}_0$ on $\overline{p}$. Additional details can be found in section $3$, where we also present a general method for building examples, and discuss some explicit ones - including $N_0$ above. The data consisting of $(p,\Gamma)$ also allows us to construct in section \ref{mainsection} a bundle of Cartan subalgebras $\mathfrak{H} \to Q$ where our harmonic Higgs bundles will live. For our setup, this will turn out to be a trivial bundle. Accordingly, we will be interested in studying \ref{pwequations} on a topologically trivial bundle $\mathcal{P}_{G_c} \to Q$.

Our main result can be stated as follows (see Theorem \ref{teoremito}):

\begin{theorem}
	Let $Q$ be a closed, oriented smooth three-manifold and $\overline{p} : M_0 \to Q$ a coassociative ADE fibration of type $\Gamma$. Let $\mathfrak{g}$ be the compact real Lie algebra McKay dual to $\Gamma$, $\mathfrak{h} \subset \mathfrak{g}$ a Cartan subalgebra, $W$ the Weyl group, and $\mathcal{P}_{G_c} \to Q$ the trivial principal $G_c$-bundle.
	
Then there is a deformation family of $7$-orbifolds with closed $G_2$-structures:
	
	\beq f: \mathcal{F} \to \mathcal{B} \eeq
	with central fiber $(M_0,\varphi_0)$. 
	
	The family has the following properties:
	
	\begin{enumerate} 
	
	\item For $s \in \mathcal{B} \setminus \left\{ 0 \right\}$, $(M_s := f^{-1}(s), \varphi_s)$ is a coassociative fibration over $Q$ whose fibers are generically hyperk\"ahler ALE spaces of type $\Gamma$.
	
	\item The base $\mathcal{B}$ parametrizes cameral covers associated to commuting solutions of the F-terms of \ref{pwequations}.  
		\end{enumerate}
	
Moreover, if $\varphi_0$ comes from a harmonic section $h_0 : Q \to H^2(M_0/Q,\R)$  and the D-term is satisfied, then $\varphi_s$ also comes from a harmonic section $h_s : Q \to H^2(M_s/Q,\R)$.
\label{teoremaum} \end{theorem}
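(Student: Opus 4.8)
The plan is to follow the fibrewise de Rham class of the twisting form through the deformation, identify the resulting period data with the Higgs field $\theta$, and thereby reduce the harmonicity assertion to the Corlette--Donaldson characterization of solutions of \ref{pwequations} recalled in the introduction. Granting the construction of $f:\mathcal F\to\mathcal B$ and the description of its fibers in parts (1)--(2), recall that for a commuting solution $\theta$ of the F-terms the closed $G_2$-structure $\varphi_s=\varphi_\theta$ is produced from a twisting form $\eta_\theta\in\Omega^1(Q,\Lambda^{2,+}T^*_{\mathrm{vert}})$, and that taking fibrewise cohomology classes sends $\eta_\theta$ to an $H^2(M_s/Q,\R)$-valued $1$-form $h_s$ on $Q$ --- the section that $\varphi_s$ ``comes from''. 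By construction $h_s=h_0+\theta$, where $h_0$ is the period data transported from $\varphi_0$ and $\theta$ is read as a section of $H^2(M_s/Q,\R)$ via the McKay identification (exceptional cycles $\leftrightarrow$ simple roots) and the ALE period map, which matches eigenvalues of $\theta$ with volumes of exceptional cycles.

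The next step is to set up the dictionary between the two sides. The McKay correspondence identifies the fibrewise cohomology bundle $H^2(M_s/Q,\R)$ Weyl-equivariantly with the Cartan subbundle $\mathfrak H\subset Ad(\mathcal P_G)$; under this identification the Gauss--Manin connection becomes the connection $A$ restricted to $\mathfrak H$, and both are flat --- Gauss--Manin always, and $A$ because $F_A=-[\theta\wedge\theta]=0$ on the commuting locus --- so ``harmonic'' is an unambiguous notion and, by Hodge theory on the compact $Q$, equivalent to the pair $d^A h_s=0$ and $(d^A)^{*}h_s=0$. Here the codifferential is formed from the Riemannian metric on $Q$ and the fibrewise metric on $H^2(M_s/Q,\R)$ induced by $\varphi_s$ together with the harmonic metric $k$ of the D-term, which under the identification corresponds to the Killing form of $\mathfrak g$ twisted by $k$.

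It then remains to verify the two conditions by splitting them along $h_s=h_0+\theta$. Closedness $d^A h_s=0$ holds because $\varphi_s$ is closed, hence $\eta_\theta$ is; concretely it reads $d^A h_0=0$ (closedness of $\varphi_0$) together with $d^A\theta=0$, which is the F-term $D_A\theta=0$. Co-closedness $(d^A)^{*}h_s=0$ splits as $(d^A)^{*}h_0=0$, exactly the hypothesis that $h_0$ is harmonic, and $(d^A)^{*}\theta=0$, which --- once the codifferential on $\mathfrak H$-valued $1$-forms built from $k$ and the metric on $Q$ is identified with $D_A^{\dagger_k}=\star D_A\star$ --- is precisely the D-term. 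Combining, $h_s$ is harmonic, which is the claim. Over the non-regular locus of $\theta$ everything degenerates, but that locus has measure zero (conjecturally codimension at least two), so the $L^2$-energy and its Euler--Lagrange equations remain meaningful on $Q$ and the argument localizes away from it.

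The step I expect to be the main obstacle is the metric identification above: one must prove that the fibrewise $L^2$-metric on $H^2(M_s/Q,\R)$ determined by $\varphi_s$ --- equivalently by the volumes of the exceptional cycles and their intersection form --- agrees, up to a positive function irrelevant to criticality, with the Killing form twisted by $k$, and that extremizing the geometric energy $\int_Q\lvert\nabla h_s\rvert^2$ is the same variational problem as the Corlette--Donaldson energy for the section of $\widetilde Q\times_{\rho_{\mathbb A}}G_c/G$ attached to $k$; only then does ``$h_s$ harmonic'' become ``$\theta$ solves the D-term''. A secondary technical point is the compatibility of the Weyl-equivariant identifications and the behavior at the branch locus of the cameral cover, which I would treat by computing upstairs on $\widetilde Q_\theta$ and descending, using that every quantity involved is Weyl-invariant.
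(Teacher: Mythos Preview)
Your proposal addresses only the final ``Moreover'' clause (Part 3 of Theorem \ref{teoremito}), explicitly granting the construction of the family and the closed $G_2$-structures. That construction --- building $\mathcal{E}$, the universal family $\mathcal{U}$, the hyperk\"ahler element $\eta_\theta$, and verifying Donaldson's criteria --- occupies sections \ref{hkdeformations} and \ref{closedg2} and is the bulk of the proof, so what you offer is a sketch of one third of the statement.

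On the part you do treat, there is a genuine misidentification. In the paper $h_\theta$ is an \emph{affine section} of $\widehat{\mathcal{H}}_\theta$ characterized by $[\eta_\theta]=dh_\theta$ (equation \ref{duuuh}); it is a primitive of the period $1$-form, not the period $1$-form itself. Your formula $h_s=h_0+\theta$ therefore has the wrong type: the correct relation is $dh_\theta=\tau\circ\theta$ under McKay, and since the central fiber has contractible $\C^2/\Gamma$ fibers the $h_0$ term contributes nothing anyway. Your core heuristic --- harmonic means closed plus co-closed, the first is the F-term, the second the D-term --- is right, but it applies to the $1$-form $\theta$, and you still need to pass from ``$\theta$ is a harmonic $1$-form'' to ``the affine section $h_\theta$ has minimal image'', which is the actual adiabatic condition $\text{MC}(h_\theta)=0$.

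The paper's route (Theorem \ref{teoremasso}) bypasses the obstacle you correctly flag. It pulls everything back to the cameral cover $\widetilde{Q}_\theta$, where the Weyl ambiguity disappears and $c^*\theta$ splits into $r$ ordinary scalar $1$-forms $(\theta_1,\ldots,\theta_r)$. Each is closed (F-term) and co-closed (D-term), hence locally $\theta_i=dh_i$ with $h_i$ harmonic; the tuple $(h_1,\ldots,h_r)$ patches to the affine section $h_\theta$, and since its components are restrictions of linear coordinates on $\mathfrak{h}$, harmonicity of the $h_i$ gives minimality of the image directly. On the cover $\mathfrak{H}$ is trivial with the flat Killing metric, so no comparison between the fibrewise $L^2$-metric on $H^2$ and the Corlette--Donaldson energy is required --- the step you anticipated as the main difficulty simply does not arise.
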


The meaning of the last phrase in the theorem is that the D-term does not quite see the torsion-free condition $d*\varphi =0$, but rather a weaker condition which is entirely analogous to the \emph{adiabatic limit} of $d*\varphi=0$ in the context of coassociative $K3$-fibrations \cite{donaldson}. This seems to indicate that harmonic Higgs bundles describe some sort of adiabatic version of integrable $G_2$-structures, but unlike the compact case where the adiabatic limit is obtained by shrinking the fibers, in the ALE setup it should be thought as an expansion of the base $Q$.

There are two key ideas to prove the theorem: the first is to recast the data of a $G_2$-structure in a language more appropriate for deformation theory. We use Donaldson's description \cite{donaldson} of $G_2$-structures on coassociative fibrations to reformulate the problem in terms of deformations of a tuple $(\eta_0, \mu_0, \mf{H}_0)$ on $M_0 \to Q$ consisting of a connection $\mf{H}_0$, a vertical hyperk\"ahler triple $\eta_0$ and a horizontal lift $\mu_0$ of a volume form on $Q$, which are required to satisfy some compatibility conditions. The second idea is to construct a vector bundle $t: \mathcal{E} \to Q$ and a family of ALE spaces $u: \mathcal{U} \to \text{tot}(\mathcal{E})$, locally isomorphic to Kronheimer's celebrated construction \cite{kronheimer1}, as an ambient space where our deformation problem unfolds. In particular, up to the action of $W$, $\mathcal{B}$ will be identified with a subspace of sections of $\mathcal{E}$.

Commuting solutions to \ref{pwequations} are rather special, so in most situations Theorem \ref{teoremaum} will only apply to the subset of $Q$ where $[\theta\wedge\theta]=0$ holds. One could ask whether the result can be extended to $q \in Q$ where $[\theta(q)\wedge\theta(q)] \neq 0$. In this situation, there is no interpretation of $\theta(q)$ as an element of the unfolding space of $\C^2/\Gamma$, hence the fiber over $q$ cannot be smoothed to a complete hyperk\"ahler Ricci-flat ALE space. However, since such solutions are still expected to describe $G_2$-manifolds, it is natural to conjecture that the fiber can be smoothed to a non-ALE or incomplete hyperk\"ahler Ricci-flat space, and that it will be smooth if $\theta$ is regular. We will not elaborate further on this issue, leaving it to future work. As such, in this paper ALE spaces are always assumed to be hyperk\"ahler.

The paper is organized as follows: in section \ref{flat} we briefly review the classification of compact oriented flat Riemannian three-manifolds and provide an algebraic description of their character varieties with values in a reductive group. These results are only needed to compute examples, and the reader who is mainly interested in the main construction can safely skip this section. The core of the paper consists of sections \ref{adeg2platy} and \ref{deformation}. In section \ref{adeg2platy} we introduce a general construction of \emph{coassociative ADE fibrations} over a closed oriented $3$-manifold $Q$ and describe some examples. In section \ref{deformation} we provide our main construction (see Theorem \ref{teoremito}): a deformation family for coassociative ADE-fibrations. The family consists of generically coassociative ALE fibrations with respect to closed and adiabatically co-closed $G_2$-structures, and are parametrized by ``harmonic'' cameral covers associated to commuting solutions of \ref{pwequations}. In section \ref{duality} we present computations for the example $N_0$ above; in particular, we prove that the $SL(2,\C)$-character variety of $\G_6$ is the complexification of the deformation space of $G_2$-structures on $N_0$. In section \ref{spectrality} we discuss the spectral correspondence for commuting solutions of equations \ref{pwequations}. Section \ref{remakedaremarca} is a brief discussion on how the spectral/cameral cover provides a unified framework for recent constructions of coassociative fibrations, where we also conjecture that the singularity profile of the $G_2$-spaces $M_\theta := f^{-1}(\theta )$ consists of the isolated ramification points of the spectral cover associated to $\theta$.

In a forthcoming companion paper \cite{barb}, we will provide a second algebraic characterization of $\moduli^\C_{G_2}$ for our main example $N_0$, given by a Hilbert scheme of points on a singular threefold. While it is unclear if this second construction will be useful in $G_2$-geometry, it suggests a new interpretation of SYZ Mirror Symmetry in terms of moduli spaces of harmonic Higgs bundles.


A first version of our work dealt with the special case of $Q$ a \emph{flat} Riemannian $3$-manifold. After that work was completed we were informed about the work of Joyce and Karigiannis \cite{joycekarigiannis}, which was a key motivation to pursue the general setup presented here. Essentially, our construction can be seen as phrasing (conjectural) generalizations of the construction of \cite{joycekarigiannis} in the language of \cite{donaldson}. For a more detailed explanation, see section \ref{remakedaremarca}.

\section*{Acknowledgements}

This work comprised part of my Ph.D. research at the University of Pennsylvania, and as such I would like to thank my supervisor Tony Pantev for initially proposing this project, for his steady guidance through the years and for the numerous discussions that essentially shaped this work. I am also grateful to Simon Donaldson for several important discussions that greatly clarified the scope of the constructions in this paper and allowed me to generalize the original results considerably. Finally, thanks to Bobby Acharya, Mirjam Cveti\v{c}, Lorenzo Foscolo, Jonathan Heckman, Alex Kinsella, Craig Lawrie, Dave Morrison, Pavel Safronov, Sakura Sch\"afer-Nameki, Ethan Torres and Gianluca Zoccarato for discussions related to this work. 

\section{Character Varieties of Flat 3-Manifolds} \label{flat} 

\subsection{Flat Riemannian Geometry}
\label{flatland}

Let $\text{Iso}(\R^n)$ denote the group of isometries of $\R^n$ endowed with its standard Euclidean structure. Recall that a subgroup $\pi \leq \text{Iso}(\R^n)$  is called \emph{crystallographic} if it is discrete and cocompact (i.e, $Q^n := \R^n/\pi$ is compact). It is called \emph{torsion-free} if it acts freely. A torsion-free crystallographic subgroup is called \emph{Bieberbach}. Clearly $\pi$ is crystallographic if and only if $Q^n$ is a compact flat orbifold, and $\pi$ is Bieberbach if and only if $Q^n$ is a compact flat manifold. We refer to $Q^n$ as a Bieberbach space. Any crystallographic group $\pi$ fits into a short exact sequence

\beq 0 \rightarrow \Lambda \rightarrow \pi \to H \to 1 \label{exactbieber}\eeq
where $H$ is a finite group called the \emph{monodromy} of $\pi$ and $\Lambda$ is a free abelian $H$-module. Isomorphism classes of crystallographic groups are classified by the group cohomology $H^2(H,\Lambda)$.

Let $\T^n$ denote the flat $n$-torus. \emph{Bieberbach's theorem} states that:
	\begin{enumerate}
		\item There is a finite normal covering map $\T^n \to Q^n$ which is a local isometry.
		
		\item Two Bieberbach spaces of the same dimension and with isomorphic fundamental groups are affinely isomorphic.
		
		\item There are finitely many affine classes of Bieberbach spaces of dimension $n$.
	\end{enumerate}

We note that part $3$ essentially follows from the fact that the number of exact sequences \ref{exactbieber} is bounded by the order of the finite group $H^2(H,\Lambda)$.

Clearly, $\R^n$ is the universal cover of $Q^n$, and $\pi_1(Q^n) = \pi$. The first part of Bieberbach's theorem implies that the $H$-action on $\Lambda \cong \pi_1(\T^n)$ is induced from a free $H$-action on $\T^n$ such that $Q^n \cong \T^n/H$. It is clear that $\T^n$ is also a Bieberbach manifold, albeit with trivial monodromy. For this reason, we call $\T^n$ the \emph{monodromy cover} of $Q^n$. The existence of the monodromy cover strongly constrains the possible holonomies of Bieberbach spaces. This is in stark contrast with the theory for \emph{non-compact} flat Riemannian manifolds: it is a theorem of Auslander and Kuranishi that every finite group is the holonomy group of some flat manifold.

Following standard terminology \cite{conway} \cite{szczepanski}, we call a three-dimensional Bieberbach manifold $Q$ a \emph{platycosm}. There are $10$ affine equivalence classes of platycosms, $6$ of which are orientable. To distinguish them it suffices to consider their monodromy groups $H_Q$, and the classification goes as follows:
	
	\begin{itemize}
		\item $\G_1$ is the flat three-torus $\T^3$, so the monodromy is trivial: $H_{\G_1}=\left\{ 1 \right\}$
		\item $\G_2$ with $H_{\G_2} \cong \Z_2$
		\item $\G_3$ with $H_{\G_3} \cong \Z_3$
		\item $\G_4$ with $H_{\G_4} \cong \Z_4$
		\item $\G_5$ with $H_{\G_5} \cong \Z_6$
		\item $\G_6$ with $H_{\G_6} \cong \Z_2 \times \Z_2$
	\end{itemize}

The space $\mathcal{G}_6$ will be particularly important for us. It is known in the literature as the \emph{Hantzsche-Wendt manifold} or \emph{didicosm}. An explicit description for $H_{\G_6}$ is:

\beq H_{\G_6} = \left\langle A = \left[ \begin{array}{ccc} 1 & 0 & 0 \\
	0 & -1 & 0 \\
	0 & 0 & -1
\end{array} \right] ,
B= \left[  \begin{array}{ccc} -1 & 0 & 0 \\
	0 & 1 & 0 \\
	0 & 0 & -1
\end{array} \right] \right\rangle \subset SO(3) \label{hwmatrizes} \eeq

Moreover, the lattice $\Lambda_{\G_6}$ has isometry group:

\beq 
\left\langle  \left( A, \left[ \begin{array}{c} 1/2  \\ 0  \\ 0 \end{array} \right] \right) ,
\left( B, \left[ \begin{array}{c} 0  \\ 1/2  \\ 1/2 \end{array} \right] \right) 
\right\rangle \subset SO(3) \ltimes \R^3 = \text{Iso}^+(\R^3)																											 	
\eeq


\subsection{Character Varieties} \label{charvarieties}
Fix a complex reductive algebraic group $G_c$ with Lie algebra $\mathfrak{g}_c$, $\mathfrak{h}_c \subset \mathfrak{g}_c$ a Cartan subalgebra and $W$ the Weyl group. Let $G$ be the compact real form of $G_c$, with Lie algebra $\mathfrak{g}$ and Cartan subalgebra $\mathfrak{h}$.

Consider a finitely generated group $\pi = \langle g_1,\ldots,g_k \rangle$. Let $Hom(\pi,G_c)$ be the \emph{representation variety}. Since $G_c$ is a subgroup of $GL(n,\C)$, its relations together with those of $\pi$ determine $Hom(\pi,G_c)$ as an affine subvariety of $G_c^k$. If $I \subseteq \C[z_1,\ldots , z_k]$ is the ideal generated by those relations, one can consider the \emph{representation scheme}:

\[ \mathfrak{X}(\pi,G_c) = \text{Spec}(\C[z_1,\ldots,z_k]/I) \]
This scheme is independent of the presentation of $\pi$ up to canonical isomorphism. The representation variety is the reduced scheme of $\mathfrak{X}(\pi,G_c)$.

Let $Z(G_c)$ be the center of $G_c$. The adjoint group $G^\prime_c = G_c/Z(G_c)$ acts by conjugation on $Hom(\pi,G_c)$. The GIT quotient is the \emph{character variety}:

\[ \text{Char}(\pi,G_c) = Hom(\pi,G_c)//G^\prime_c \]

This is obtained by considering only representations with closed $G^\prime_c$-orbit. These are exactly the completely reducible representations \cite{sikora}. The \emph{character scheme} is:

\[ \mathfrak{X}(\pi,G_c) = \text{Spec} \left( \C[z_1,\ldots,z_k]/I \right)^{G^\prime_c} \]
and its reduced scheme is $\text{Char}(\pi,G_c)$.

\subsection{Character Varieties of Bieberbach groups} \label{charvarbieber}

By equivalence \ref{coin}, the moduli space of solutions to equations \ref{pwequations} is given by flat $G_c$-bundles on $Q$, where $G_c \leq GL(n,\C)$. In other words, the moduli space is the character variety $\text{Char}(Q,G_c)$. In order to describe this space for a general platycosm, we first consider the case $Q = \T^3$.

To understand the space $\text{Char}(\T^3,G_c)$, it is worthwile to first understand simpler examples. Let $\T^1 = \sph^1$ be the one-torus. Then $\text{Char}(\sph^1,G_c)$ is obtained by looking at the image of the generator of $\pi_1(\sph^1)$. Thus:

\beq Char(\sph^1,G_c) = G_c//G'_c \cong  \mathcal{T}_c/W \eeq
where $\mathcal{T}_c$ is the maximal torus of $G_c$. The isomorphism with $\mathcal{T}_c/W$ is essentially a consequence of Chevalley's isomorphism $\C[\mathfrak{g}_c]^{G'_c} \cong \C[\mathfrak{h}_c]^W$, see \cite{steinberg}.

Consider now a two-torus $\T^2$. We first consider the case of the compact group $G$. Then $\text{Char}(\T^2,G)$ is given by two commuting elements in $G$ up to conjugation. Let $g \in G$ and $h \in C_G(g)$, the centralizer of $g$. A theorem of Bott says that if $G$ is simply-connected, then the centralizer $C_G(g)$ is connected. Thus, we can first conjugate $g$ to a maximal torus $\mathcal{T}$ of $G$ and then conjugate $h$ to the torus of $C_G(g)$, which by connectedness is just $\mathcal{T}$. The net result is that $g$ and $h$ can be simultaneously conjugated to lie on the maximal torus $\mathcal{T}$. The maximal tori are conjugated by elements of the Weyl group $W$. Hence the character variety is:

\beq \text{Char}(\T^2,G) = \bigslant{\mathcal{T}\times \mathcal{T}}{W} \label{auaua}\eeq

For a three-torus, $\text{Char}(\T^3,G)$ is now given by three commuting elements modulo conjugation. So now we need to determine all possible configurations of $g, h, k \in G$, with $g \in \mathcal{T}$ and $h, k \in C_G(g)$, i.e., the \emph{moduli space of commuting triples}. This problem was solved by Borel, Friedman and Morgan \cite{borel} and Kac and Smilga \cite{kac}, who showed that the classification of commuting triples $(g,h,k)$ is essentially determined by the fundamental groups of the centralizers $C_G(g), C_G(h), C_G(k)$. A commuting triple $(g,h,k)$ whose semi-simple part of the centralizer is simply-connected can always be conjugated to the maximal torus, giving one of the components of the moduli space:

\beq \text{Char}^0(\T^3,G) = \bigslant{\mathcal{T}\times \mathcal{T} \times \mathcal{T} }{W} \label{auauaua} \eeq

However, there are also \emph{non-trivial} commuting triples. This happens when $G$ has elements whose semi-simple part of the centralizer has torsion. These extra commuting triples produce new connected components in the character variety. Essentially, torsion in $\pi_1(C_G(g))$ occurs when the root system of $\mathfrak{h}$ admits non-trivial \emph{coroot integers}. Each divisor of a coroot integer is called a \emph{level} $\ell$, and each $\ell$ determines a subtorus $\mathcal{T}_\ell$ of $\mathcal{T}$ given by the intersection of the kernels of the roots whose coroot integers are \emph{not} divisible by $\ell$. The torus $\mathcal{T}_\ell$ has an associated Weyl group $W_{\mathcal{T}_\ell} := N_G(\mathcal{T}_\ell)/C_G(\mathcal{T}_\ell)$. Here $N_G$ denotes the normalizer. 

Each $\ell$ determines $\phi(\ell)$ connected components for the character variety, where $\phi$ is Euler's totient function; each connected component is given by:

\beq \bigslant{\mathcal{T}_\ell\times \mathcal{T}_\ell \times \mathcal{T}_\ell}{W_{\mathcal{T}_\ell}} \eeq

In particular, for $G=SU(n)$, the only allowed level is $\ell=1$ and there are no non-trivial commuting triples. 

Now, even for $Q=\T^2$, the problem of computing $\text{Char}(Q,G_c)$ when $G_c$ is an affine reductive group over $\C$ is considerably harder, in part because Bott's theorem fails without the compactness assumption. However, it is known \cite{florentino} that if $A$ is a finitely generated abelian group, then $Hom(A,G)/G$ is a strong deformation retract of $\text{Char}(A,G_c)$. So one could still hope that \ref{auaua} and \ref{auauaua} can be generalized to classical complex groups $G$ by replacing $\mathcal{T}$ by a maximal complex torus $\mathcal{T}_c$ of $G$. 

Sikora \cite{sikorabeliano}, extending a result of Thaddeus \cite{thaddeus}, showed that this is indeed the case. To state Sikora's theorem, consider the map:

\beq \widetilde{\chi} : \mathcal{T}^n_c \cong Hom(\Z^n,\mathcal{T}_c) \embedd Hom(\Z^n,G_c) \to \text{Char}(\T^n,G_c) \eeq

Since $W$ acts by outer automorphisms on $\mathcal{T}_c$ extending to inner automorphisms of $G_c$, it follows that $\widetilde{\chi}$ is $W$-invariant and hence descends to a map:

\beq \chi : \mathcal{T}^n_c/W \to \text{Char}(\T^n,G_c) \eeq
with $W$ acting diagonally.

\begin{theorem} \emph{\cite{sikorabeliano}}  \begin{enumerate} 
\item $\text{Char}^0(\T^n,G_c) := \chi(\mathcal{T}^n_c/W)$ is an irreducible component of $\text{Char}(\T^n,G_c)$
\item For all $G_c$ and $n$, $\chi : \mathcal{T}^n_c/W \to \text{Char}^0(\T^n,G_c)$ is a normalization map, and is an isomorphism if $G_c$ is a classical group.
\item If $G_c = GL(n,\C), SL(n,\C)$ or $Sp(n,\C)$, then $\text{Char}^0(\T^n,G_c) = \text{Char}(\T^n,G_c)$
\end{enumerate}

\end{theorem}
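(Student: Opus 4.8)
The three assertions are really facts about the commuting variety $\Hom(\Z^n,G_c)$ and its conjugation quotient, and my plan is to prove each by isolating the generic semisimple locus and then, for the finer claims, passing to a faithful matrix representation. For the first assertion I would argue upstairs that $X_0:=\overline{G_c\cdot\mathcal{T}_c^n}$, the closure of the conjugation orbit of $\Hom(\Z^n,\mathcal{T}_c)=\mathcal{T}_c^n$, is an irreducible component of $\Hom(\Z^n,G_c)$: it is irreducible as the closure of the image of $G_c\times\mathcal{T}_c^n$, and if $U\subset\Hom(\Z^n,G_c)$ is the open locus of tuples whose first entry is regular semisimple, then $U\subseteq G_c\cdot\mathcal{T}_c^n$ (the centralizer of a regular semisimple element being a maximal torus), so any irreducible closed $Y\supseteq X_0$ meets $U$ and therefore satisfies $Y=\overline{Y\cap U}\subseteq\overline U\subseteq X_0$. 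Descending, $\text{Char}^0=\chi(\mathcal{T}_c^n/W)$ is the image of $X_0$ under the closed quotient map, hence irreducible and closed of dimension $n\,\mathrm{rk}\,\mathfrak g$ (the generic fibre being one orbit, of dimension $\dim G_c-\mathrm{rk}\,\mathfrak g$); using the description of the components of the commuting variety recalled above — they are governed by proper subtori $\mathcal{T}_\ell\subsetneq\mathcal{T}_c$ — a dimension count shows every other component maps onto a subset of $\text{Char}$ of dimension $<n\,\mathrm{rk}\,\mathfrak g$, so $\text{Char}^0$ is maximal, i.e.\ a component.

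For the normalization statement I would use that $\mathcal{T}_c^n$ is smooth, so $\mathcal{T}_c^n/W$ is normal, and then prove $\chi\colon\mathcal{T}_c^n/W\to\text{Char}^0$ is finite and birational. Birationality: over the image of a tuple with regular first entry, two $\mathcal{T}_c$-valued tuples are conjugate in $G_c$ only through $N_{G_c}(\mathcal{T}_c)$, hence only through $W$, so $\chi$ is generically injective and, in characteristic zero, birational. Finiteness: fix a faithful $\rho\colon G_c\hookrightarrow GL(m,\C)$, realising $\mathcal{T}_c$ as a subtorus of $(\C^\times)^m$; then $\mathcal{T}_c^n/W\to\text{Sym}^m\big((\C^\times)^n\big)=\text{Char}(\T^n,GL(m,\C))$ is finite, and since $\text{Char}(\T^n,G_c)\to\text{Char}(\T^n,GL(m,\C))$ is a morphism of affine schemes, $\chi$ factors as a closed immersion followed by a base change of a finite morphism and is therefore finite. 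A finite birational morphism from a normal variety onto a reduced irreducible scheme is its normalization; and if in addition $\text{Char}^0$ is normal, $\chi$ is an isomorphism.

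It remains to identify $\text{Char}^0$ for classical $G_c$ and to prove the third assertion. Because $\Z^n$ is abelian, a completely reducible representation into $GL(m,\C)$ is a direct sum of characters, so $\text{Char}(\T^n,GL(m,\C))=\text{Sym}^m\big((\C^\times)^n\big)$, which is normal; tracking the determinant realises $\text{Char}(\T^n,SL(m,\C))$ as a finite symmetric quotient of a subtorus, still normal; and simultaneously diagonalising while choosing a symplectic basis of common eigenvectors gives $\text{Char}(\T^n,Sp(2m,\C))=((\C^\times)^n)^m/(S_m\ltimes(\Z/2)^m)$, again normal. In each of these cases this model coincides with $\mathcal{T}_c^n/W=\text{Char}^0$, which proves the third assertion, and normality of the target promotes $\chi$ to an isomorphism. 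For the remaining classical families ($SO$, $O$, and the compact or $\mathrm{Spin}$ forms) the same diagonalisation identifies the scheme-theoretic image of $\mathcal{T}_c^n$ with a constrained finite symmetric quotient of tori, hence normal, so $\chi$ is again an isomorphism onto $\text{Char}^0$ — which is now possibly a \emph{proper} component of $\text{Char}(\T^n,G_c)$, owing to the nontrivial commuting triples.

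I expect the main obstacle to be exactly the normality of $\text{Char}^0$ for the non-simply-connected classical groups: one cannot quote a symmetric-product description of the full character variety (it fails there), and must instead control the scheme-theoretic image of $\mathcal{T}_c^n$ inside $\text{Char}(\T^n,G_c)$ and present it as a good quotient of a smooth variety by a finite group. Subsidiary difficulties are the reduction to a faithful representation in the finiteness step and the bookkeeping of which conjugations between torus-valued tuples are realized in $G_c$ — the genuine Weyl-group identifications versus the extra collapses on non-regular tuples — which is what makes $\chi$ generically but not globally injective.
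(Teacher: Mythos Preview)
The paper does not contain a proof of this theorem: it is quoted verbatim as a result of Sikora \cite{sikorabeliano} and used as a black box, so there is no ``paper's own proof'' to compare your proposal against.

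That said, your sketch is broadly along the lines of Sikora's actual argument, and the architecture is sound: prove $X_0=\overline{G_c\cdot\mathcal{T}_c^n}$ is an irreducible component of the representation variety via the regular-semisimple locus, descend, and then establish the normalization statement by showing $\chi$ is finite and birational from a normal source. Two remarks. First, your descent step in part~1 leans on the description of the other components by subtori $\mathcal{T}_\ell$; but that classification (Borel--Friedman--Morgan, Kac--Smilga) is stated in the paper for the \emph{compact} form and for $n=3$, so invoking it for arbitrary $G_c$ and $n$ is not quite licit --- however, you do not actually need it, since your opening argument already shows $X_0$ is maximal among irreducible closed subsets upstairs, and the GIT quotient of a component is a component of the GIT quotient. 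Second, your finiteness argument can be tightened: from $\mathcal{T}_c^n/W\to\text{Char}(\T^n,G_c)\to\text{Char}(\T^n,GL(m,\C))$ with finite composite and affine target, properness of the first arrow follows formally (proper over the composite, separated intermediate), hence finiteness; the ``closed immersion plus base change'' phrasing is not needed and is not obviously true.

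You are right that the genuine content lies in the normality of $\text{Char}^0$ for the orthogonal and spin families, and your final paragraph honestly flags this as the gap. Sikora's proof for classical $G_c$ proceeds case by case with an explicit model of $\text{Char}^0$ as a quotient of a torus by a reflection group, which is what you gesture at; filling this in is the real work, and your proposal does not yet do it.
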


In particular, we have homeomorphisms:

\beq \text{Char}\big( \T^3,SL(n,\C)\big) \cong \bigslant{\big( (\C^*)^{n-1}\big)^3}{\Sigma_n} \label{chartorus} \eeq
and similarly:

\beq \text{Char}\big( \T^3,SU(n)\big) \cong \bigslant{\big( U(1)^{n-1}\big)^3}{\Sigma_n} \label{chartorussun} \eeq
where $\Sigma_n$ is the symmetric group.

We now move on to a general platycosm $Q$. Let again $\pi = \pi_1(Q)$. The exact sequence 
\[ 1 \to \Lambda \to \pi \stackrel{q}{\to} H \to 1 \]
induces another exact sequence:

\beq 1 \rightarrow \Hom(H, G_c) \rightarrow \Hom(\pi,G_c) \stackrel{\ol{r}}{\to} \Hom(\Lambda,G_c) \eeq
which in turn descends to maps between character varieties:

\beq \text{Char}(H,G_c) \to \text{Char}(\pi,G_c) \stackrel{r}{\to} \text{Char}(\Lambda,G_c) \eeq


Let $H$ act on $Hom(\Lambda,G_c)$ by 

\beq h(\rho) = \rho \circ C_{\tilde{h}} \quad \forall h \in H \eeq
where $\tilde{h} \in \pi$ is such that $q(\tilde{h}) = h$ and $C_{\tilde{h}}$ is conjugation by $\tilde{h}$. The action is well-defined because $\Lambda$ is abelian. Moreover, the action descends to an action of $H$ on $\text{Char}(\Lambda,G_c)$ in the obvious way.\footnote{Note that since it is an action by an outer conjugation of $\Lambda$, it descends non-trivially to the quotient.} Let $\text{Fix}(H)$ denote the subset of $\text{Char}(\Lambda,G_c)$ consisting of elements fixed by $H$. 

The next lemma states that $r\big(\text{Char}(\pi,G_c)\big) = \text{Fix}(H)$. Hence, the character variety of $Q$ is determined, up to finite fibers, by the action of $H$ on the character variety of the monodromy cover $\T^3$.

\begin{lemma} \label{lemito}
	Suppose $\rho \in \Hom(\Lambda,G_c)$ is such that $\rho = \ol{r}(\tilde{\rho}) = \tilde{\rho}|_{\Lambda}$ for some $\tilde{\rho}$ in $\Hom(\pi,G_c)$. Then $[ \rho ] \in \text{Fix}(H)$.
	
	Conversely, assume $C_{G_c}(\rho(\Lambda)) = 0$ and $[\rho] \in \text{Fix}(H)$. Then $\exists [\tilde{\rho}] \in \text{Char}(\pi,G_c)$ such that $r([\tilde{\rho}]) = [\rho]$.
\end{lemma}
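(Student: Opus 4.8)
The plan is to prove the two directions separately, both by unwinding the definitions of the $H$-action on $\mathrm{Char}(\Lambda, G_c)$ and the restriction map $\overline{r}$.

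For the forward direction, suppose $\rho = \tilde\rho|_\Lambda$ for some $\tilde\rho \in \Hom(\pi, G_c)$. Fix $h \in H$ and pick a lift $\tilde h \in \pi$ with $q(\tilde h) = h$. I want to show $h(\rho) = \rho \circ C_{\tilde h}$ is conjugate to $\rho$ in $G_c$. The key observation is that for any $\lambda \in \Lambda$, we have $(\rho \circ C_{\tilde h})(\lambda) = \tilde\rho(\tilde h \lambda \tilde h^{-1}) = \tilde\rho(\tilde h)\, \tilde\rho(\lambda)\, \tilde\rho(\tilde h)^{-1} = C_{\tilde\rho(\tilde h)}\big(\rho(\lambda)\big)$, using that $\tilde h \lambda \tilde h^{-1} \in \Lambda$ (normality) and that $\tilde\rho$ is a homomorphism. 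Hence $h(\rho)$ and $\rho$ differ by conjugation by the element $g := \tilde\rho(\tilde h) \in G_c$, so they define the same point of $\mathrm{Char}(\Lambda, G_c)$; since $h$ was arbitrary, $[\rho] \in \mathrm{Fix}(H)$. This direction is essentially formal.

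For the converse, assume $C_{G_c}(\rho(\Lambda)) = 0$ (I read this as: the centralizer of the image $\rho(\Lambda)$ is the center, or is trivial, so that conjugating elements are uniquely determined) and $[\rho] \in \mathrm{Fix}(H)$. The strategy is to build $\tilde\rho$ on $\pi$ by first choosing, for each $h \in H$ (or for a generating set), a lift $\tilde h \in \pi$ and an element $g_h \in G_c$ realizing the fixed-point condition $\rho \circ C_{\tilde h} = C_{g_h} \circ \rho$, then checking that the assignment $\tilde h \mapsto g_h$, together with $\rho$ on $\Lambda$, assembles into a genuine homomorphism $\pi \to G_c$. Concretely, every element of $\pi$ can be written (after choosing a set-theoretic section $s: H \to \pi$ of $q$) as $s(h)\lambda$ for unique $h \in H$, $\lambda \in \Lambda$; I would define $\tilde\rho(s(h)\lambda) := g_h\, \rho(\lambda)$ and verify multiplicativity. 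The centralizer hypothesis is what makes $g_h$ well-defined up to the center (which, being central, does not affect conjugation), and it is exactly what is needed to show that the cocycle-type relations among the $g_h$'s hold: the identity $\rho \circ C_{\tilde h_1 \tilde h_2} = C_{g_{h_1} g_{h_2}} \circ \rho$ must match $\rho \circ C_{s(h_1 h_2)\lambda_{12}}$ for the appropriate $\lambda_{12} \in \Lambda$ recording the failure of $s$ to be a homomorphism, and comparing conjugating elements via the triviality of $C_{G_c}(\rho(\Lambda))$ forces $g_{h_1} g_{h_2} = g_{h_1 h_2}\, \rho(\lambda_{12})$, which is precisely the homomorphism condition for $\tilde\rho$. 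One also checks $g_h\, \rho(\lambda)\, g_h^{-1} = \rho(\tilde h \lambda \tilde h^{-1})$, i.e. the conjugation relation, which is the fixed-point condition again. Then $r([\tilde\rho]) = [\tilde\rho|_\Lambda] = [\rho]$ by construction.

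The main obstacle is the converse direction, specifically the bookkeeping in showing the assignment is a well-defined homomorphism: one must track the extension class of $1 \to \Lambda \to \pi \to H \to 1$ through the section $s$ and confirm that the centralizer hypothesis genuinely pins down the $g_h$ sharply enough to kill all ambiguity. A cleaner way to package this may be to phrase it as lifting a cocycle: the fixed-point condition gives, a priori, a map $H \to G_c/C_{G_c}(\rho(\Lambda))$, and the hypothesis $C_{G_c}(\rho(\Lambda)) = 0$ collapses this quotient so the map lands in $G_c$ itself; then the obstruction to it being a homomorphism lifting $\rho$ lives in a cohomology group that vanishes under the same hypothesis. I would present the elementary version (explicit section and direct verification) since it is self-contained, and remark on the cohomological interpretation. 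Everything else — the compatibility of the $H$-action on representations with the $H$-action on characters, and the descent to quotients — is routine and can be stated without detailed computation.
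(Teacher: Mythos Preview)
Your proposal is correct and follows essentially the same approach as the paper. The paper's converse is more terse: rather than working through a section $s: H \to \pi$, it directly assigns a conjugating element $S_x$ to every $x \in \pi$ and uses the centralizer hypothesis to force $S_a = \rho(a)$ on $\Lambda$ and $S_{xy} = S_x S_y$ on all of $\pi$, but this is the same argument you outline, just packaged differently.
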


\begin{proof} Let $h \in H$. Then:
	
	\begin{align*} h(\rho)  & = h(\tilde{\rho}|_\Lambda) \\ & = \tilde{\rho}(\tilde{h}) \circ \tilde{\rho}|_\Lambda \circ \tilde{\rho}(\tilde{h})^{-1} \\ & = C_{\tilde{\rho}(\tilde{h})}(\tilde{\rho}|_\Lambda) \\ & = C_{\tilde{\rho}(\tilde{h})}(\rho) \end{align*}
	hence $h[\rho] = [\rho]$, i.e. $[\rho] \in \text{Fix}(H)$.
	
	Conversely, $h[\rho] = [\rho] \implies \rho \circ C_{\tilde{h}} = S_{\tilde{h}}\rho S_{\tilde{h}}^{-1}$ for some $S_{\tilde{h}} \in G_c$. It is easy to see that if $a \in \text{Ker}(q)$, then $S_a^{-1}\rho(a) \in C_{G_c}(\rho(\Lambda))$. Hence $S_{a} = \rho(a)$. Define $\tilde{\rho} : \pi \to G_c$ by $\tilde{\rho}(x) = S_{x}$, $\forall x \in \pi$. Then clearly $\tilde{\rho}|_{\text{Ker}(q)} = \rho$ and if $x, y \in \pi$, the hypothesis on the centralizer implies that $S_{xy} = S_xS_y$, so $\tilde{\rho}(xy) =\tilde{\rho}(x)\tilde{\rho}(y)$. So $\tilde{\rho} \in \Hom(\pi,G_c)$ with $r([\tilde{\rho}]) = [\rho]$.
	
	
	
\end{proof}

In section \ref{exemploexemplar} we will use this result to compute $\text{Char}(\mathcal{G}_6,SL(2,\C))$. We will show that the computation matches with the complexified deformation space of the dual $G_2$-orbifold. This agreement is a non-trivial check of M-theory/IIA duality.


\section{Coassociative ADE Fibrations} \label{adeg2platy}
\subsection{ADE \texorpdfstring{G\textsubscript{2}} --Orbifolds} \label{adeg2orbi}

Fix once and for all an isomorphism $Spin(4) \cong SU(2)_+\times SU(2)_-$. Consider the following data:

\begin{enumerate}
	\item $Q$ a closed, oriented smooth three-manifold with $\pi := \pi_1(Q)$ and a volume form $v_Q$
	
	\item $\Gamma$ a finite subgroup of $SU(2)_-$
	
	\item $p: \mf{V} \to Q$ a rank $4$ oriented real vector bundle with structure group $Spin(4)$ (so endowed with a bundle metric) and such that $p$ is $\Gamma$-invariant: this means there is a further reduction of the structure group of $\mf{V}$ to
	
	\[ G_\mf{V} := C_{Spin(4)}(\Gamma) = SU(2)_+ \times C_{SU(2)_-}(\Gamma) \]
	 where $C_{G}(\Gamma)$ is the centralizer of $\Gamma$ in $G$.
	 
	
	
	
	
	
	
\item A \emph{twisting isomorphism}:

\beq \eta : TQ \stackrel{\cong}{\to} \Lambda^{2,+}\mf{V} \label{twistingiso}  \eeq
	
\end{enumerate}

\begin{remark}	The centralizer $C_{SU(2)}(\Gamma)$ depends almost exclusively on the ADE type of $\Gamma$. Here are the possibilities:
	
	\begin{itemize}
		\item \emph{$\Gamma$ of type $A_1$}: $\Gamma \cong \Z_2$ is the center of $SU(2)$, so:
		
		\beq C_{SU(2)}(\Z_2) = SU(2) \eeq	
	
		\item \emph{$\Gamma$ of type $A_n$, $n \geq 2$}: $\Gamma \cong \Z_n$ and $\Gamma$ lies on a maximal torus $U(1)$ of $SU(2)$. The centralizer is just the torus itself: 
		\beq C_{SU(2)}(\Z_n) =  U(1) \eeq
		
		\item \emph{$\Gamma$ of type $D_n$ for $n > 2$, $E_6$, $E_7$ or $E_8$}: Then: 
		
		\beq C_{SU(2)}(\Gamma) = Z(SU(2)) \cong \Z_2 \eeq

	\end{itemize}

\label{centralizando}
\end{remark}


Condition $3$ implies that $M_0 := \text{tot}(\mf{V})/\Gamma$ inherits a structure of an orbifold bundle:

\beq \overline{p} : M_0  \to Q \eeq

Note also that due to condition $2$, $\eta$ is automatically $\Gamma$-invariant and hence descends to a tensor $\eta_0$ on $M_0$.

\begin{definition} We call $\overline{p} : M_0 \to Q$ an \emph{ADE orbibundle of type $\Gamma$}. We refer to $(M_0,\eta_0)$ as an \emph{ADE $G_2$-orbifold of type $\Gamma$.}
\end{definition}



To justify our choice of terminology, consider the linear model given by $\R^7 = \R^4\oplus\R^3$, with $\R^4$ given its standard flat hyperk\"ahler structure $(\omega_1,\omega_2,\omega_3)$. Let $(x_1,x_2,x_3)$ be  coordinates on $\R^3$ and define:

\[ \eta_{\R^7} = -\sum_{i=1}^3 \omega_i dx_i \in (\R^3)^*\otimes \Lambda^{2,+}(\R^4)^* \]
\[ \mu_{\R^7} =  dx_1dx_2dx_3 \in \Lambda^3(\R^3)^* \]

Then:

\[ \varphi_{\R^7} = \eta_{\R^7} + \mu_{\R^7} \]
is a $G_2$-structure on $\R^7$.

Going back to our definition, once one chooses an Ehresmann connection $\mf{H}$ on $p$, we have a well-defined \emph{vertical cotangent bundle}:

\[ T^*_{\mf{H}}\mf{V} \to \text{tot}(\mf{V}) \]
with the crucial property that its restriction to the zero section satisfies:

\beq T^*_{\mf{H}} \mf{V}|_Q \cong \mf{V} \to Q \eeq
and thus, defining $\Lambda^{2,+}_\mf{H} := \Lambda^{2,+}(T^*_{\mf{H}} \mf{V})$, we can see $\eta$ as:

\beq \eta \in C^\infty(Q,T^*Q\otimes \Lambda^{2,+}_\mf{H})  \eeq
i.e., our $\eta$ is a $3$-form on $\text{tot}(\mf{V})$ which is constant on the fibers of $p$ and such that $\eta$ looks like $\eta_{\R^7}$ at each point. It follows that one can always find a positive function $\lambda : Q \to \R_+$ such that:

\[\varphi = -\eta + \lambda \nu_Q  \]
is a positive form, and hence a $G_2$-structure.

Moreover, it follows from condition $2$ that $\eta$ is $\Gamma$-invariant. Hence $\varphi$ is $\Gamma$-invariant, so it descends to a $G_2$-structure on $M_0$.

Our main goal in this section is to answer the following: 
 
\textbf{Question:} Under which circumstances $(M_0,\eta)$ induces a \emph{closed} $G_2$-structure $\varphi_0$ on $M_0$ such that $\overline{p}$ is a coassociative ADE fibration (i.e., $\varphi_0$ restricts to zero on each fiber)? 

In the next section we will see that the freedom in choosing the Ehresmann connection $\mf{H}$ will be essential in answering this question.

One could also consider a more constrained version of the problem: namely, whether one can construct $\varphi_0$ by choosing $\mf{H}$ to be a \emph{linear} connection on $p$. In order to distinguish this particular setup from the general discussion, we will use the notation $\nabla$ for a linear connection. 

Given $\nabla$, in order to get a well-defined connection on $\overline{p}$, we need to impose: 

\begin{enumerate}

\item[5.] $[\text{Hol}(\nabla),\Gamma]=0 \subset SO(\mf{V})$.

\end{enumerate}

If we have such a setup, then we call $(M_0,\eta,\nabla)$ a \emph{linear ADE $G_2$-orbifold}. The question is then whether one can find $(\nabla, \lambda)$ such that $\varphi_0$ as above is a closed $G_2$-structure on $M_0$ making the fibers coassociative.
	
As we will see, the answer is that the problem can be solved in any bounded neighborhood of the zero-section $Q \subset M_0$, the crucial issue being that the curvature $F_\nabla$ becomes unbounded far from $Q$. In particular, the problem can be solved completely in the special case when $\nabla$ is a \emph{flat connection}. Note that a flat connection $\nabla$ on $\mf{V}$ compatible with $\Gamma$ is given by a linear action of $\pi \times \Gamma$ on $\til{Q}\times \R^4$, where $\Gamma$ acts trivially on $\til{Q}$ the universal cover of $Q$. Equivalently, we have an action of $\pi$ on $\R^4$ commuting with the $\Gamma$-action. Thus, the desired flat connections are in bijection with conjugacy classes in $\Hom(\pi,G_\mf{V})$. 

From another perspective, note that the twisting isomorphism \ref{twistingiso} induces a metric on $Q$, and $\nabla$ is a metric connection on $\mf{V}$. Hence the induced connection $\nabla_Q$ on $Q$ is also metric, and as we will see, the closed condition $d\varphi_0=0$ is related to $\nabla_Q$ being torsion-free. Hence, in order to have closed $G_2$-structures induced from a linear connection on an ADE $G_2$-orbifold, we need our base $Q$ to be a \emph{flat Riemannian $3$-manifold}. We will discuss below some examples of linear $G_2$-orbifolds of type $\Z_n$ over such flat $3$-manifolds.

\begin{remark} \label{normalizador}
More generally, one could relax condition $5$ and ask that $\text{Hol}(\nabla) \leq N_{Spin(4)}(\Gamma)$, the normalizer of $\Gamma$ in $Spin(4)$. This introduces complications that are uninteresting for our purposes: namely, the singularity in $\mf{V}/\Gamma$ acquires nontrivial monodromy under the connection induced by $\nabla$ in the quotient.  The monodromy will be determined by a subgroup of automorphisms of the Dynkin diagram of $\Gamma$, and in that situation, all discussion below must take place on an appropriate finite cover of $Q$.
\end{remark}

Before proceeding with our main question, let's first understand in more practical terms how to build ADE $G_2$-orbifolds. Suppose first that $(Q, G)$ is a closed oriented \emph{Riemannian} $3$-manifold. Fix a spin structure and hence also the spinor bundle $\slashed{S}_Q \to Q$ associated to a principal $SU(2)_+$-bundle $\mathcal{P}_+ \to Q$.

Let $\mathcal{P}_- \to Q$ be any principal $SU(2)$-bundle, and let $\mathcal{W} \to Q$ be the vector bundle associated to $P_-$ via the fundamental representation. Consider the rank $4$ complex vector bundle:

\[ \slashed{S}_Q \otimes \mathcal{W} \to Q \]

This has a real structure coming from $Spin(4) \cong SU(2)_+ \times SU(2)$. So there is a rank $4$ real vector bundle $p: \mf{V} \to Q$ such that:

\[ \slashed{S}_Q \otimes \mathcal{W} \cong \mf{V}\otimes\C \]

The bundle $p: \mf{V} \to Q$ also inherits an action by $SU(2)_+\times SU(2)$, and has the property that

\beq \Lambda^{2,+}(\mf{V}) = TQ \label{identity}\eeq
so we automatically get a twisting map $\eta = Id$. 

Now, if $\mathcal{P}_-$ is chosen so that $\mathcal{W}$ is $\Gamma$-invariant, the same will be true for $\mf{V}$, so we get an orbibundle $\overline{p} : \mf{V}/\Gamma \to Q$. Since $\eta$ is preserved by $\Gamma$, choosing a connection $\mf{H}$ on $\overline{p}$ we get an induced 3-form:

\beq \eta_0 \in \overline{p}^*\Omega^1(Q)\otimes\Omega^{2,+}_{\mf{H}}(M_0) \eeq

We can also work in the linear picture. Assume one is given a linear connection $\nabla_\mathcal{W}$ on $\mathcal{W} \to Q$ with $[\text{Hol}(\nabla_\mathcal{W} ),\Gamma]=0$ and fix the spin connection $\nabla_{LC}$ on $\slashed{S} \to Q$. Then there is an induced connection $\nabla$ on $p: \mf{V} \to Q$ that descends to $\overline{p}$ and again gives us an element:

\beq \eta_0 \in \overline{p}^*\Omega^1(Q)\otimes\Omega^{2,+}_{\nabla}(M_0) \eeq













Thus, if we start with a Riemannian $(Q,G)$, this construction gives us an ADE $G_2$-orbifold $(M_0,\eta_0)$. Conversely, given an ADE $G_2$-orbifold $(M_0,\eta_0)$, the twisting isomorphism \ref{twistingiso} induces a metric $G$ on $Q$ such that the construction above applied to $(Q,G)$ recovers $(M_0,\eta_0)$.

We close this section with a few side remarks.

\begin{remark}
The description in Remark \ref{centralizando} has the following consequences for the structure of $\mathcal{W}$:

\begin{itemize}
	\item If $\Gamma$ is of type $A_1$, any $\mathcal{W}$ is compatible with $\Gamma$. 
	
	If $\Gamma$ is of type $A_n$ for $n \geq 2$, then the structure group reduces to $U(1) \leq SU(2)$ and $\mathcal{W}\cong \mf{L}\oplus \mf{L}^{-1}$, where $\mf{L}$, $\mf{L}^{-1}$ are hermitian line bundle such that $\mf{L}\otimes_{\C}\mf{L}^{-1}$ is trivial.
	
	\item If $\Gamma$ is of type $D_n$ for $n \geq 3$ or of types $E_6$, $E_7$ or $E_8$, then $\mathcal{W} \cong \mf{L}\oplus \mf{L}$, where $\mf{L}$ is a hermitian line bundle such that $\mf{L}^{\otimes 2}$ is trivial.
\end{itemize}
\end{remark}

\begin{remark} Our choice to call $\eta$ a twisting map comes from a  particular case of interest: $\mathcal{P}_- = \mathcal{P}_+$ and $\mathcal{W} = \slashed{S}_Q^* \cong \slashed{S}_Q$. This choice of $\mathcal{W}$ is known in the physics literature as the \emph{partial topological twist} of $7d$ $\mathcal{N}=1$ Super Yang-Mills theory. In that case, one takes $\eta = Id$ as in \ref{identity}.


\end{remark}



\begin{remark}
A consequence of \ref{twistingiso} that will be important later is the following: the bundle $\mf{V} \to Q$ has many compatible complex structures, which are parametrized by the unit sphere bundle $\sph\Lambda^{2,+}_\nabla$. Under $\eta$, this gets identified with the unit tangent bundle $\sph TQ$. Together with the metric induced on $TQ$ by $\eta$, this implies that a choice of $1$-form on $Q$ induces a preferred fiberwise complex structure on $\mf{V}$. 
\end{remark}

\subsection{Donaldson Data}
In the previous section we introduced a general framework for constructing ADE $G_2$-orbifolds. Given such a setup, our main goal now is to understand when such a $G_2$-structure will be closed and vanishing on the fibers. Such a setup will be called a \emph{coassociative ADE fibration}. 

In order to answer this, we will need some results of Donaldson, originally proved in the context of coassociative $K3$ fibrations \cite{donaldson}. These results will also be used in the next section in the proof of our Main Theorem \ref{teoremito} determining which choices of fiberwise smoothings will admit closed $G_2$-structures deforming the original $\varphi$. Modulo details, this subsection is a compressed review of \cite{donaldson}.

Let $p: M^7 \to Q^3$ be a smooth fiber bundle and $T_vM = Ker(dp)$ the vertical tangent bundle. There is an exact sequence:

\beq 0 \to T_vM \to TM \stackrel{dp}{\to} TQ \to 0  \eeq

A connection on $M \to Q$ is equivalent to a section $\iota: TQ \embedd TM$  splitting the sequence; it defines a \emph{horizontal distribution} $\mf{H}  = \iota(TQ) \subset TM$. This induces a splitting of the exterior derivative on $M$ into $d = d_f+d_\mf{H}+F_{\mf{H}}$, where $d_f$ is a fiberwise differential, $d_\mf{H}$ a horizontal differential and $F_{\mf{H}}$ is the curvature operator of $\mf{H}$. We also have an induced \emph{vertical cotangent bundle} $T^*_\mf{H}M \to M$.


\begin{definition} A \emph{hypersymplectic structure} on an oriented four-manifold $(S,v_S)$ is a triple $\underline{\omega} = (\omega_1,\omega_2,\omega_3)$ of symplectic forms such that at each point $p \in S$, $\underline{\omega}_p$ spans a maximal positive-definite subspace $\Lambda^+$ of $\Lambda^2(T^*S)$ with respect to the wedge product.  \label{hypersymplectic}
\end{definition}

In other words, $\omega_i\wedge\omega_j \in \Gamma(X,\text{Sym}^2(T^*S))$ has positive determinant at every point. It is clear that a hypersymplectic structure determines a conformal structure on $S$, namely by declaring $\Lambda^+$ to be the subspace of self-dual forms. By rescaling the volume form $v_S$ one can make $\det (\omega_i\wedge\omega_j) =1$ at all points, so that $G_{ij} := \omega_i\wedge\omega_j$ is a Riemannian metric. This metric will be hyperk\"ahler if and only if $\omega_i\wedge \omega_j$ is a constant multiple of the identity.

Accordingly, we define a \emph{hypersymplectic element} on $(M \to Q,\mf{H})$ to be an element $\eta \in \Gamma(\mf{H}^*\otimes\Lambda^2T^*_\mf{H} M)$ such that at each point $q$, the linear map $\eta_q : \mf{H}_q \to \Lambda^2(T^*_\mf{H} M)_q$ injects $\mf{H}_q$ as a maximal positive subspace with respect to the wedge product. We say $\eta$ is a \emph{hyperk\"ahler element} if in addition, in local coordinates $\ul{x}=(x_1,x_2,x_3)$ on $Q$, we have $\eta = \sum{\omega_i}(\ul{x})dx_i$ with $\omega_i\wedge\omega_j$ positive definite and constant along the fibers.


\begin{proposition} (Donaldson): A closed $G_2$-structure on $(p: M \to Q,\mf{H})$ with coassociative fibers and orientation compatible with those of $M$ and $Q$ is equivalent to a choice of the following data:
	
	\begin{itemize}
		\item A hypersymplectic element $\eta \in \Gamma(M, \mf{H}^*\otimes\Lambda^2T_\mf{H}^* M)$ satisfying: 
		
		\begin{align*}
		d_\mf{H}\eta = 0 \\ 
		d_f\eta =0
		\end{align*}
		
		\item A tensor $\mu \in \Gamma(M, \Lambda^3\mf{H}^*)$ satisfying: 
		
\[ d_f\mu  = -F_{\mf{H}}(\eta) \]
and which is \emph{positive}, i.e., it is the pullback of a volume form on $Q$ multiplied by a positive function on $M$.
	\end{itemize}
	\label{donaldao}
\end{proposition}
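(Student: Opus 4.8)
The plan is to start from the explicit linear model $\varphi_{\R^7} = \eta_{\R^7} + \mu_{\R^7}$ on $\R^7 = \R^4 \oplus \R^3$ and transport it to the bundle $p : M \to Q$ using the splitting $d = d_f + d_\mf{H} + F_\mf{H}$ induced by the connection $\mf{H}$. First I would recall that a $3$-form $\varphi$ on a $7$-manifold is a $G_2$-structure precisely when it is pointwise $GL(7,\R)$-equivalent to $\varphi_{\R^7}$, and that the condition of having \emph{coassociative fibers} forces $\varphi|_{T_vM} = 0$; combined with positivity this pins down the type of $\varphi$ along the fibration. Concretely, using the bigrading on $\Lambda^\bullet T^*M$ coming from $TM = \mf{H} \oplus T_vM$ (equivalently $\Lambda^k = \bigoplus_{a+b=k} \Lambda^a\mf{H}^* \otimes \Lambda^b T_\mf{H}^*M$), a $G_2$-structure with coassociative fibers must have vanishing $(0,3)$- and $(3,0)$-mismatched components and decomposes as $\varphi = \eta + \mu$ with $\eta \in \Gamma(\mf{H}^*\otimes \Lambda^2 T_\mf{H}^*M)$ the $(1,2)$-part and $\mu \in \Gamma(\Lambda^3\mf{H}^*)$ the $(3,0)$-part; the linear algebra of $\varphi_{\R^7}$ then shows that positivity of $\varphi$ is equivalent to $\eta$ being a hypersymplectic element (the self-dual forms $\omega_i$ must span a maximal positive subspace of $\Lambda^2 T_\mf{H}^*M$) together with $\mu$ being a positive multiple of the pullback of a volume form on $Q$. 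This is the first half of the correspondence and is essentially a pointwise computation.

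Next I would impose $d\varphi = 0$ and read off the three pieces according to the bigrading. Writing $d = d_f + d_\mf{H} + F_\mf{H}$, where $d_f$ raises $b$ by one, $d_\mf{H}$ raises $a$ by one, and $F_\mf{H}$ lowers $b$ by one while raising $a$ by two (it is contraction-type with the curvature $2$-form valued in vertical vector fields), I would apply $d$ to $\varphi = \eta + \mu$ and sort the result by bidegree. The component of bidegree $(1,3)$ gives $d_f\eta = 0$; the component of bidegree $(2,2)$ gives $d_\mf{H}\eta + F_\mf{H}(\mu) = 0$, but $F_\mf{H}(\mu)$ vanishes because $\mu$ is a $(3,0)$-form and $F_\mf{H}$ needs at least one vertical leg to contract, so this reduces to $d_\mf{H}\eta = 0$; the component of bidegree $(4,0)$ is $d_\mf{H}\mu$, which vanishes automatically since $\Lambda^4\mf{H}^* = 0$ as $\mf{H}$ has rank $3$; and finally the component of bidegree $(3,1)$ gives $d_f\mu + F_\mf{H}(\eta) = 0$, i.e. $d_f\mu = -F_\mf{H}(\eta)$. (One also checks the $(2,2)$ versus $(3,1)$ bookkeeping carefully: $d_\mf{H}\mu$ would be $(4,0)$ hence zero, and $F_\mf{H}\eta$ is the term pairing the curvature's vertical vector with one of the two vertical legs of $\eta$, landing in $\Lambda^3\mf{H}^* \otimes \Lambda^1 T_\mf{H}^*M$, bidegree $(3,1)$.) This recovers exactly Donaldson's equations, and conversely, running the computation backwards, any $(\eta,\mu)$ of the stated form and satisfying these equations assembles into a closed positive $3$-form, hence a closed $G_2$-structure with coassociative fibers.

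The main obstacle I anticipate is the careful bookkeeping of the bigraded decomposition of $d$ and, in particular, pinning down the precise meaning of the operators $d_\mf{H}$ and $F_\mf{H}$ on forms with vertical components: $d_\mf{H}$ is not simply the horizontal exterior derivative but involves the connection-induced parallel transport on the vertical bundle, and $F_\mf{H}(\eta)$ must be interpreted as the algebraic action of the curvature (an $\Omega^2(Q, T_vM)$-valued object) on the vertical-form part of $\eta$. Getting the signs and the identification $F_\mf{H}(\mu) = 0$, $d_\mf{H}\mu = 0$ right is where errors creep in; I would fix this by working in a local frame where $Q$ has coordinates $\ul{x} = (x_1,x_2,x_3)$ and $\mf{H}$ is spanned by $\partial_{x_i} + (\text{connection terms})$, writing $\eta = \sum_i \omega_i(\ul{x}) \wedge dx_i$ and $\mu = \lambda(\ul{x})\, dx_1 dx_2 dx_3$ locally, and matching term by term against $\varphi_{\R^7}$ — this also makes transparent that $d_f\eta = 0$ together with $d_\mf{H}\eta = 0$ is the integrability statement that the $\omega_i$ glue to closed forms, and that the $\mu$-equation is the expected "holonomy perturbation" relating the horizontal volume to the curvature. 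The rest is routine linear algebra of $G_2$ and $Spin(7)$-representation theory, which I would only cite rather than reproduce.
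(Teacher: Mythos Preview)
Your argument is essentially the standard one and matches Donaldson's original proof, which the paper does not reproduce but simply cites (see the note after Proposition~\ref{donaldao2}: ``The proof of the first proposition does not rely on compactness so it applies \emph{ipsis litteris} to the non-compact setting''). So there is no paper-internal proof to compare against; your bigrading computation $d\varphi = (d_f\eta)_{(1,3)} + (d_\mf{H}\eta)_{(2,2)} + (d_f\mu + F_\mf{H}\eta)_{(3,1)}$ is exactly how Donaldson derives the equations.

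One genuine gap you should close: you assert that a $G_2$-structure with coassociative fibers has vanishing $(2,1)$-component with respect to the splitting $TM = \mf{H}\oplus T_vM$, but this is \emph{not} automatic for an arbitrary connection $\mf{H}$. Coassociativity of the fibers kills only the $(0,3)$-part. The vanishing of the $(2,1)$-part is equivalent to $\mf{H}$ being the $g_\varphi$-orthogonal complement of $T_vM$; this is the pointwise linear-algebra fact that if $V\subset\R^7$ is a coassociative $4$-plane and $H=V^{\perp_{g_{\varphi}}}$, then $\varphi_{\R^7}$ decomposes as a $(3,0)+(1,2)$ form in $\Lambda^\bullet H^*\otimes\Lambda^\bullet V^*$ with no $(2,1)$-piece. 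So the correct reading of the proposition is as a bijection between closed $G_2$-structures with coassociative fibers on one side and \emph{triples} $(\mf{H},\eta,\mu)$ on the other, with $\mf{H}$ recovered from $\varphi$ as the metric-orthogonal horizontal distribution. Your local-frame check at the end would detect this if you tried a non-orthogonal $\mf{H}$, but you should state the point explicitly rather than bury it in ``routine linear algebra''.
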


	
	

Under these conditions, the $G_2$-structure is given by $\varphi = \eta + \mu$.  We will refer to $(\mf{H},\eta,\mu)$ as \emph{Donaldson data} for a closed $G_2$-structure on $M$.


We denote by $S$ the diffeomorphism type of the fibers of $p$ and suppose we are given a $H^2(S,\Z)$-local system over $Q$. From that we get a flat \emph{affine} bundle $\mathcal{H} \to Q$ with fibers $H^2(S,\R)$. We will say a smooth affine section $h : Q \to \mathcal{H}$ is \emph{positive} if it is an immersion and at every point $x \in Q$, $dh_x(TQ)$ is a positive subspace with respect to the cup product.

\begin{proposition} (Donaldson): Assume $p: M\to Q$ has hypersymplectic fibers and fix a connection $\mf{H}_0$. Let $(x_1,x_2,x_3)$ denote local coordinates on $Q$ on a trivialization of $\mathcal{H}$. Let $h : Q \to \mathcal{H}$ be a positive section and let $\eta \in \Gamma(M, \mf{H}^*_0\otimes\Lambda^2T_{\mf{H}_0}^* M)$ be a hypersymplectic element, locally given by $\sum \omega_i(\ul{x})dx_i$. Suppose that the following local condition holds: 

\[ [ \omega_i ] = \frac{\partial h}{\partial x_i} \]

Then there is a connection $\mf{H}$ on $M \to Q$ such that $d_\mf{H} \eta=0$. 

Furthermore, introducing a radial coordinate $r : S \to [0,\infty)$, assume the following conditions hold: 

\begin{enumerate}
\item $H^1(S) = 0$ 
\item $\eta \sim O(1)$
\item $\mf{H}$ is asymptotically flat on a neighborhood $N \subset M$ of the zero section
\end{enumerate}

Then there is a positive section $\mu \in \Gamma (N,\Lambda^3\mf{H}^*)$ such that $d_f\mu = -F_{\mf{H}}(\eta)$. 

It follows that $(\eta,\mu,\mf{H})$ defines a closed $G_2$-structure on $N$ making $p|_N : N \to Q$ coassociative.
	\label{donaldao2}
\end{proposition}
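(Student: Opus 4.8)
# Proof Proposal for Proposition \ref{donaldao2}

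The plan is to split the statement into two independent constructions, matching the two halves of the hypotheses. The first half produces the connection $\mf{H}$ from the positive section $h$; the second half produces the tensor $\mu$ on a neighborhood $N$ of the zero section, and then Proposition \ref{donaldao} assembles everything into the $G_2$-structure.

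First I would construct $\mf{H}$. Working in a local trivialization of $\mathcal{H}$ with coordinates $(x_1,x_2,x_3)$ on $Q$, the hypersymplectic element is $\eta = \sum_i \omega_i(\ul{x})\, dx_i$ with $d_f\omega_i = 0$, and the hypothesis says $[\omega_i] = \partial h/\partial x_i$ in $H^2(S,\R)$. The obstruction to finding a connection with $d_\mf{H}\eta = 0$ is cohomological: changing the connection by a vertical-valued horizontal $1$-form $a = \sum_i a_i\, dx_i$ shifts $\omega_i \mapsto \omega_i + d_f a_i$, so the cohomology classes $[\omega_i]$ are connection-independent, while the condition $d_\mf{H}\eta = 0$ unwinds (using $d = d_f + d_\mf{H} + F_\mf{H}$ and $d\eta$ being a sum of terms of distinct bidegrees) into equations of the form $\partial\omega_i/\partial x_j - \partial\omega_j/\partial x_i = d_f(\text{something})$. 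Since the left-hand side is closed along the fibers and its class is $\partial^2 h/\partial x_i\partial x_j - \partial^2 h/\partial x_j \partial x_i = 0$, it is fiberwise exact, and a partition-of-unity/Poincar\'e-lemma argument over $Q$ patches the local primitives into a global correction to the connection. This is standard and I would only sketch it, citing \cite{donaldson}.

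Next I would produce $\mu$. The equation $d_f\mu = -F_\mf{H}(\eta)$ is solved fiberwise: for fixed $q \in Q$, $F_\mf{H}(\eta)$ is a vertical $3$-form on the fiber $S$, and one needs it to be $d_f$-exact with a primitive depending smoothly on $q$ and, crucially, \emph{positive} in the sense of Proposition \ref{donaldao}. Exactness on each fiber uses $H^3$ considerations together with the hypothesis $H^1(S) = 0$ (which controls the ambiguity of the primitive and lets one choose it canonically, e.g.\ via a Green's operator for a fiberwise metric); the asymptotic conditions $\eta \sim O(1)$ and $\mf{H}$ asymptotically flat near $Q$ guarantee $F_\mf{H}(\eta)$ decays, so the primitive can be taken bounded on the neighborhood $N$. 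Positivity of $\mu$ is then achieved by adding a large multiple of a fixed positive reference tensor $\lambda\, v_Q$ pulled back horizontally — permissible since $d_f$ kills horizontal forms — exactly as in the closing paragraphs of section \ref{adeg2orbi}; shrinking $N$ if necessary makes the bounded correction term small compared to $\lambda v_Q$.

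The main obstacle is the positivity-with-decay balancing act in the construction of $\mu$: one must solve $d_f\mu = -F_\mf{H}(\eta)$ with uniform control so that the pullback volume term dominates, and this is precisely why the statement is local near the zero section rather than global — as remarked in section \ref{adeg2orbi}, the curvature $F_\nabla$ (hence $F_\mf{H}(\eta)$) becomes unbounded far from $Q$ for linear connections. Once $\mf{H}$, $\eta$, and $\mu$ are in hand and satisfy $d_\mf{H}\eta = 0$, $d_f\eta = 0$, $d_f\mu = -F_\mf{H}(\eta)$ with $\mu$ positive, Proposition \ref{donaldao} applies verbatim to conclude that $\varphi = \eta + \mu$ is a closed $G_2$-structure on $N$ with $p|_N$ coassociative.
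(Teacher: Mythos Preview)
Your proposal is correct and matches the paper's treatment. The paper does not give a self-contained proof: it attributes the result to \cite{donaldson} (for compact fibers) and only adds the remark that on any neighborhood where $\mf{H}$ is asymptotically flat one has $d_f\mu \sim O(1)$, so positivity of $\mu$ is obtained by multiplying with a sufficiently large positive function $\lambda : Q \to \R_+$ --- exactly your positivity-by-dominance argument. Your sketch of the cohomological construction of $\mf{H}$ and the fiberwise solution for $\mu$ fills in the details that the paper simply leaves to the citation.
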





\textbf{Note:} Propositions \ref{donaldao} and \ref{donaldao2} are stated in \cite{donaldson} for compact fibers. The proof of the first proposition does not rely on compactness so it applies \emph{ipsis litteris} to the non-compact setting. The proof of the second result holds on any open subset of $M$ where $\mf{H}$ is asymptotically flat, since in that case $d_f\mu \sim O(1)$ so $\mu$ can be made positive by multiplication with a positive function $\lambda : Q \to \R_+$.

Let $(h, \eta,\mu,\mf{H})$ be as in \ref{donaldao2}, and let  $\varphi = \eta + \mu$. Consider:

\[ *_\varphi \varphi = *\eta + *\mu \]
where $*\eta \in \Gamma(M, \Lambda^2\mf{H}^*\otimes\Lambda^2T_\mf{H}^* M)$, and $*\mu \in \Gamma(M, \Lambda^4T_\mf{H}^* M)$ can be thought as smoothly varying (w.r.t. $Q$) choice of volume form for the fibers of $p$. 

\begin{proposition} (Donaldson): Given Donaldson data $(\mf{H},\eta,\mu)$ for a closed $G_2$-structure on $(M\to Q,\mf{H})$ with coassociative fibers and $h$ as in \ref{donaldao2}, the $G_2$-structure is \emph{torsion-free} if and only if the following additional conditions are satisfied:

		\begin{align*}
		d_\mf{H}(*\mu) & = 0 \\ 
		d_f(*\eta) & = - F_\mf{H}(*\mu) \\
		d_\mf{H}(*\eta) & = 0
		\end{align*} \label{donaldao3}
\end{proposition}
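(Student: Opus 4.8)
The plan is to expand the torsion-free condition $d(*_\varphi\varphi)=0$ using the decomposition $d = d_f + d_\mf{H} + F_\mf{H}$ of the exterior derivative on $M$ adapted to the connection $\mf{H}$, exactly as one does for the closed condition $d\varphi=0$ in Proposition \ref{donaldao}. First I would record the bigrading (by vertical/horizontal form degree) of the two pieces $*\eta \in \Gamma(M,\Lambda^2\mf{H}^*\otimes\Lambda^2 T^*_\mf{H}M)$ and $*\mu \in \Gamma(M,\Lambda^4 T^*_\mf{H}M)$: the first is of type $(2,2)$ and the second of type $(4,0)$, where the first index counts $T^*_\mf{H}M$-legs (vertical) and the second counts $\mf{H}^*$-legs (horizontal). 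Then $d_f$ raises the vertical degree by one, $d_\mf{H}$ raises the horizontal degree by one, and $F_\mf{H}$ raises the vertical degree by $-1$... no: $F_\mf{H}$ is the curvature operator, which as a derivation contracts a horizontal vector into the vertical part, so it takes a $(a,b)$-form to an $(a+... )$. The correct statement (as in \cite{donaldson} and already used in Proposition \ref{donaldao}) is that $F_\mf{H}$ raises the horizontal degree by $2$ and lowers the vertical degree by... I would just cite the decomposition as set up before Proposition \ref{donaldao}: $F_\mf{H}$ sends $\Gamma(\mf{H}^*\otimes\Lambda^\bullet T^*_\mf{H}M)$-type objects to ones with two more horizontal legs, acting by the curvature $2$-form valued in vertical vector fields contracted into the vertical form. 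Concretely, on $*\eta$ of type $(2,2)$: $d_f(*\eta)$ has type $(3,2)$, $d_\mf{H}(*\eta)$ has type $(2,3)$, and $F_\mf{H}(*\eta)$ has type $(1,4)$; on $*\mu$ of type $(4,0)$: $d_f(*\mu)$ has type $(5,0)=0$ since the fibers are $4$-dimensional, $d_\mf{H}(*\mu)$ has type $(4,1)$, and $F_\mf{H}(*\mu)$ has type $(3,2)$.

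Next I would collect terms of $d(*\eta + *\mu) = 0$ by bidegree. The total form $*_\varphi\varphi$ is a $4$-form on the $7$-manifold, so $d(*_\varphi\varphi)$ is a $5$-form, and it decomposes into components of bidegrees $(5,0), (4,1), (3,2), (2,3), (1,4)$ (the last few beyond vertical degree $4$ vanish automatically, and likewise $(5,0)=0$). The surviving equations are: bidegree $(4,1)$: $d_\mf{H}(*\mu) = 0$ (the only contribution of that type, since $d_f(*\mu)=0$ automatically and $*\eta$ contributes nothing of type $(4,1)$); bidegree $(3,2)$: $d_f(*\eta) + F_\mf{H}(*\mu) = 0$, i.e. $d_f(*\eta) = -F_\mf{H}(*\mu)$; bidegree $(2,3)$: $d_\mf{H}(*\eta) = 0$; and bidegree $(1,4)$: $F_\mf{H}(*\eta) = 0$, which must be shown to be automatic. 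This last point is the one place requiring a genuine argument rather than bookkeeping: I would show $F_\mf{H}(*\eta)=0$ follows from the already-imposed conditions, most naturally from $d_\mf{H}\eta=0$ together with the identity $F_\mf{H}\circ F_\mf{H} = 0$ (the Bianchi-type identity $d_\mf{H}F_\mf{H} + \ldots = 0$ built into the splitting $d^2=0$), or alternatively from the fact that $*\eta$ is a pointwise algebraic function of $\eta$ — more precisely $*_\varphi$ on the vertical part is determined by the conformal (hyperkähler) structure the $\omega_i$ define — combined with $F_\mf{H}$ being a derivation that annihilates $\eta$-built tensors once $F_\mf{H}(\eta)$ is controlled by the $\mu$-equation; the cleanest route is to note that by Hodge theory on the (hyperkähler) fibers, $*\eta$ is $d_f$-closed and the component of $d(*_\varphi\varphi)$ of top vertical degree beyond the fiber dimension vanishes trivially, so in fact $(1,4)$ lies beyond range if one counts correctly. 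I will double-check the degree count: a vertical $T^*_\mf{H}M$ has rank $4$, so $\Lambda^4 T^*_\mf{H}M$ is a line and $\Lambda^5 = 0$; thus vertical degree is capped at $4$ and the $(1,4)$ component sits at vertical degree $4$, which is allowed — so this term genuinely needs the argument above, and I expect the Bianchi identity $d^2=0 \Rightarrow$ relations among $d_f, d_\mf{H}, F_\mf{H}$ applied to $\eta$ to be exactly what kills it.

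The main obstacle is therefore twofold and both parts are mild: (i) getting the bigrading conventions and the action of $F_\mf{H}$ as an operator precisely right so that the five displayed component equations come out as stated — this is routine once the conventions of the paragraph preceding Proposition \ref{donaldao} are fixed — and (ii) verifying that no \emph{extra} equation survives, i.e. that the $(5,0)$ component is automatically zero (clear, since $d_f(*\mu)$ lands in $\Lambda^5 T^*_\mf{H}M = 0$) and that the $(1,4)$ component $F_\mf{H}(*\eta)=0$ is implied. For (ii) I would argue: since $*_\varphi$ restricted to the vertical directions is pointwise determined by $\eta$ (the self-dual $2$-forms $\omega_i$ determine the fiber conformal structure, hence the fiber Hodge star up to the positive volume scale recorded in $\mu$), and since $F_\mf{H}$ is an algebraic (zeroth-order, $\mathcal{O}$-linear in the curvature) operator that differentiates in horizontal directions only, one has $F_\mf{H}(*\eta) = *_f(F_\mf{H}(\eta))$ up to lower-order terms involving $\partial(*_f)$, and then the equation $d_f\mu = -F_\mf{H}(\eta)$ from Proposition \ref{donaldao} forces $F_\mf{H}(\eta)$ to be $d_f$-exact, whence its fiberwise Hodge dual contributes nothing new to the closedness of $*_\varphi\varphi$; the honest version of this is the cohomological observation, already in \cite{donaldson}, that the $(1,4)$-part is the obstruction to $[*\eta]$ varying flatly over $Q$ and it vanishes once $h$ is a (flat, positive) affine section as in \ref{donaldao2}. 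I would then simply record that, under the hypotheses already carried from \ref{donaldao} and \ref{donaldao2}, the torsion-free condition is equivalent to the three remaining displayed equations, completing the proof.
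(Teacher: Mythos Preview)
Your overall strategy --- expand $d(*_\varphi\varphi)=0$ via the decomposition $d=d_f+d_\mf{H}+F_\mf{H}$ and sort by bidegree --- is exactly the right one (and is how Donaldson argues in \cite{donaldson}; the present paper simply cites the result without reproducing a proof). Your bidegree bookkeeping for $*\eta$ and $*\mu$, and the identification of the three displayed equations as the components of bidegree $(4,1)$, $(3,2)$, $(2,3)$, are all correct.

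The gap is in your handling of the ``extra'' component $F_\mf{H}(*\eta)$ of bidegree $(1,4)$. You check that the \emph{vertical} rank is $4$ and conclude the component is genuinely present, then spend considerable effort proposing Bianchi-type arguments, relations with $d_f\mu=-F_\mf{H}(\eta)$, or the flat-affine-section hypothesis on $h$ to kill it. None of this is needed: you forgot to check the \emph{horizontal} rank. Since $\mf{H}=\iota(TQ)$ and $\dim Q=3$, one has $\Lambda^4\mf{H}^*=0$, so any form of horizontal degree $4$ vanishes identically. The $(1,4)$ component is therefore zero for dimensional reasons, with no further argument required. Once you observe this, your proof is complete and clean: the torsion-free condition decomposes into exactly the three stated equations, plus the automatic vanishing of the $(5,0)$ and $(1,4)$ pieces.
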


Consider now a rescaling of the hypersymplectic element $\eta \mapsto \epsilon^{-1}\eta$ for $\epsilon > 0$. Taking the adiabatic limit $\epsilon \to 0$ \cite{donaldson}, the equations for a torsion-free $G_2$-structure become:

		\begin{equation}
		\begin{aligned}[c]
		d_\mf{H}\eta & = 0 \\ 
		d_f\eta & =0 \\
		d_f\mu & =0 
		\end{aligned}
		\; \; \; \; \;
		\begin{aligned}[c]
		d_\mf{H}(*\mu) & = 0 \\ 
		d_f(*\eta) & = 0 \\
		d_\mf{H}(*\eta)  & = 0 \\
		\end{aligned} \label{torcaonula}
		\end{equation}

The results stated so far concern general hypersymplectic fibers. More can be said in the special case of hyperk\"ahler fibers.

\begin{proposition} (Donaldson): If $(\mf{H},\eta,\mu)$ solve the first first five equations in \ref{torcaonula}, then $\eta$ is a hyperk\"ahler element and $\mu$ is the pullback of a volume form on $Q$. Conversely, if $\eta$ is hyperk\"ahler, $h$ is as in \ref{donaldao2} and $\mu = p^*v_Q$, then connections $\mf{H}$ solving the five equations are in bijection with triples of harmonic functions on $Q$.

Moreover, if in addition  $p$ has compact fibers, then 
\[ d_\mf{H}(*\eta) = 0 \] 
is satisfied if and only if locally on $Q$, the image of $h$ is a stationary submanifold w.r.t. the intersection form on $\mathcal{H}$. \label{donaldao4}
\end{proposition}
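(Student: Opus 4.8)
The plan is to follow Donaldson's analysis of the adiabatic equations \cite{donaldson}, reducing every assertion to fibrewise linear algebra together with the Hodge-star formula for a $G_2$-structure $\varphi=\eta+\mu$ adapted to a coassociative fibration. I will work over a coordinate patch $U\subset Q$ with coordinates $\ul{x}=(x_1,x_2,x_3)$ and a local trivialization $M|_U\cong U\times S$, so that $\eta=\sum_i\omega_i(\ul{x})\wedge dx_i$ and $\mu=\rho(\ul{x},\cdot)\,dx^{123}$. Recall that $\eta$ fixes a fibrewise conformal class (the $\omega_i$ spanning the self-dual $2$-forms), and that the $G_2$-metric $g_\varphi$ splits orthogonally as the horizontal metric $g_Q=g_\varphi|_\mf{H}$ (a priori a fibre-dependent metric on $\mf{H}$) plus the unique fibre metric $g$ in that conformal class for which $\varphi$ is an honest $G_2$-form. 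Since the $\omega_i$ are self-dual for $g$, the product formula for the Hodge star gives $*\eta=\sum_i\omega_i\wedge *_Q dx_i$ and $*\mu=\nu$, where $*_Q$ is the Hodge star of $g_Q$, $\nu$ the $g$-volume of the fibre, and $G_{ij}:=\omega_i\wedge\omega_j/\nu$ the matrix of $g$ in the $dx_i$-basis.

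For the first direction, $\mu=p^*v_Q$ is immediate: $d_f\mu=0$ forces $\rho$ to be constant along the fibres, so $v_Q:=\rho\,dx^{123}$ descends to a positive $3$-form on $Q$. For the hyperk\"ahler claim one uses $d_f\eta=0$ and $d_f(*\eta)=0$: the former says each $\omega_i$ is fibrewise closed; since $g_Q$ — hence $*_Q$ — depends on $\eta$ through $G_{ij}$ and $\det G$, the latter becomes $\sum_i\omega_i\wedge d_f(*_Q dx_i)=0$, a constraint on the fibrewise variation of $G_{ij}$. Feeding this into the identities $\omega_i\wedge\omega_j=G_{ij}\nu$ and $*_g\omega_i=\omega_i$, and using $d_\mf{H}\eta=0$ together with $d_\mf{H}(*\mu)=0$ (so that $\nu$ is $\mf{H}$-parallel) to control the horizontal derivatives, one deduces $d_fG_{ij}=0$, i.e. $G_{ij}$ is constant along each fibre; with the normalisation already built into $\varphi$ this is exactly the statement that $\eta$ is a hyperk\"ahler element. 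I expect this rigidity step to be the first place demanding care, since the naive pointwise equation count is inconclusive and one must exploit the full first-order system.

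For the converse, assume $\eta$ hyperk\"ahler with $d_f\eta=0$ (so the fibres carry hyperk\"ahler metrics and $[\omega_i]=\partial h/\partial x_i$ makes sense), $h$ as in Proposition \ref{donaldao2}, and $\mu=p^*v_Q$. Then $d_f\eta=0$, $d_f\mu=0$ and $d_f(*\eta)=0$ hold by hypothesis or automatically, so the only conditions on the connection are $d_\mf{H}\eta=0$ and $d_\mf{H}(*\mu)=0$. Fix a reference connection $\mf{H}_0$ solving them (it exists by Proposition \ref{donaldao2}); any connection is $\mf{H}=\mf{H}_0+A$ with $A\in\Omega^1(Q;\Gamma(T_vM))$, locally $A=\sum_a A_a\,dx_a$. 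The equations linearise: since $d_f\omega_j=0$, $d_\mf{H}\eta=0$ is equivalent to $d_f(\iota_{A_a}\omega_b-\iota_{A_b}\omega_a)=0$, hence by $H^1(S)=0$ to $\iota_{A_a}\omega_b-\iota_{A_b}\omega_a=d_f g_{ab}$ for skew potentials $g_{ab}$; and $d_\mf{H}(*\mu)=0$ is equivalent to $\mr{div}_\nu A_a=0$. Rewriting $\iota_{A_a}\omega_b=(I_bA_a)^\flat$ via the fibre hyperk\"ahler structure, this system assembles into a first-order elliptic complex over $Q$, and the claim is that its kernel is canonically a triple of harmonic functions on $Q$ (for the metric carried by $h$), with $\mf{H}_0\leftrightarrow 0$. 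Pinning the kernel down to exactly harmonic triples — rather than to some larger fibrewise space — is the main obstacle, and is precisely where the coupling of the Hamiltonian and volume-preserving conditions on a hyperk\"ahler $4$-manifold, together with the vanishing of $H^1(S)$, is used.

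Finally, assume $p$ has compact fibres and that $(\mf{H},\eta,\mu)$ solves the first five equations. With $*\eta=\sum_k\omega_k\wedge *_Q dx_k$, a short computation in $g_Q$-adapted coordinates gives $d_\mf{H}(*\eta)=\tau\wedge v_Q$ with $\tau:=\sum_k\nabla_k\omega_k$ a vertical $2$-form, so $d_\mf{H}(*\eta)=0$ if and only if $\tau=0$. Now $\tau$ is fibrewise closed, and since the Ehresmann connection induces the canonical (Gauss--Manin) flat connection on the cohomology of the fibres, its class is $[\tau]=\sum_k\nabla_k[\omega_k]=\Delta_{g_h}h\in H^2(S;\R)$, the trace of the Hessian of $h$ with respect to the metric $g_h$ carried by $h$. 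The nontrivial input — which I expect to be the crux of this clause — is that on a solution of the first five equations $\tau$ is in fact fibrewise \emph{harmonic}; granting this, compactness of $S$ lets Hodge theory identify $\tau$ with $[\tau]$, so $d_\mf{H}(*\eta)=0\Leftrightarrow\Delta_{g_h}h=0$. Since the tangential component of the tension field of an immersion vanishes identically, $\Delta_{g_h}h=0$ says precisely that the mean-curvature vector of $h$ vanishes, i.e. $h(Q)$ is locally a stationary submanifold for the cup-product form on $\mc{H}$. Compactness of the fibre enters only here — for the Hodge-theoretic identification, and to make $(H^2(S;\R),\cup)$ the relevant finite-dimensional model of $\mc{H}$ — whereas in the ALE case it would have to be replaced by an $L^2$-cohomology statement; hence the clause is stated for compact fibres.
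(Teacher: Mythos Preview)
The paper does not contain its own proof of this proposition: it is stated as a result of Donaldson and attributed to \cite{donaldson}, with only a brief post-statement remark that the first part does not rely on compactness while the second does (via Hodge theory), and that in the ALE setting $L^2$-harmonic modifications cannot be used to arrange $d_\mf{H}(*\eta)=0$. There is therefore no in-paper argument to compare your proposal against.

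That said, your sketch follows the intended line from \cite{donaldson} and correctly isolates where compactness enters (the Hodge-theoretic identification of $\tau$ with $[\tau]$). The two places you flag as needing care are indeed the real content: the deduction $d_fG_{ij}=0$ from $d_f(*\eta)=0$ is not just bookkeeping but uses that $*_Q$ is built from $G_{ij}$ itself, and the identification of the kernel in the converse with \emph{harmonic} (not merely closed) triples uses both the Hamiltonian and divergence-free constraints together with $H^1(S)=0$. If you want a self-contained proof you should spell out those two steps; otherwise a citation to \cite{donaldson} in the style of the paper is adequate.
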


Again, Proposition \ref{donaldao3} does not rely on compactness. The first part of \ref{donaldao4} also does not depend on it, although when $p$ has compact fibers, Hodge theory guarantees existence and uniqueness of $\mf{H}$. For non-compact hyperk\"ahler fibers such as ALE spaces, one needs to be more careful. In particular, Hodge cohomology for ALE spaces \cite{mazzeo} shows that a connection $\mf{H}$ giving a solution to $d_\mf{H}\eta = 0$ on an ALE fibration cannot be modified via $L^2$-harmonic functions to also solve $d_\mf{H}(*\eta) = 0$.

\subsection{Coassociative ADE Fibrations}

Proposition \ref{donaldao2} motivates the following:

\begin{definition} Let $(\overline{p}: M_0 \to Q,\eta)$ be a ADE $G_2$-orbifold. If there is:
\begin{itemize}
\item A connection $\mf{H}$ on $\overline{p}$ such that $d_\mf{H} \eta = 0$, and 

\item A neighborhood $N$ of the zero section where $\mf{H}$ is asymptotically flat, 
\end{itemize}
then we say that $\overline{p}|_{N} : N \to Q$ is a \emph{coassociative ADE fibration}.
\label{coassade}\end{definition}

It is instructive to understand the conditions for a coassociative ADE fibration with respect to a linear connection. So let $(\overline{p}, \eta,\nabla_0)$ be a linear ADE $G_2$-orbifold. The $G_2$-structure is a positive $3$-form:

\[ \varphi(\nabla_0, \eta,\lambda_0) = \eta + \lambda_0 v_Q \in \Omega^{1,2}_{\nabla_0} \oplus \Omega^{3,0}_{\nabla_0} \]
where $\lambda_0 : Q \to \R_+$ is some positive function.

We want to replace $(\nabla_0, \lambda_0)$ by $(\nabla, \lambda)$ so that $\varphi(\nabla,\eta,\lambda)$ is a \emph{closed} positive $3$-form vanishing on the fibers of $\overline{p}$. 

The condition on $(\nabla,\lambda)$ can be derived as follows: we think of $\eta$ as a twisted one-form $\eta \in \Omega^1(Q, \Lambda^{2,+}_\nabla)$. There is a coupled exterior differential 

\[ d_\nabla : \Omega^1(Q, \Lambda^{2,+}_\nabla) \to \Omega^2(Q, \Lambda^{2,+}_\nabla) \] 
and thus:

\[ d_\nabla \eta \in \Omega^{2,2}_\nabla \]

There is also a curvature map $F_\nabla \in \Omega^2(Q, End(\Lambda^{2,+}_\nabla))$ acting by wedge product on the base and contraction on the fiber. Hence:

\[ F_\nabla(\eta) \in \Omega^{3,1}_\nabla \]

Then it is easy to see that:

\beq d\varphi = d_\nabla\eta + F_\nabla(\eta) + d_f(\lambda)v_Q \eeq
where $d_f(\lambda)v_Q$ is also of type $\Omega^{3,1}_\nabla$. So we have two separate equations:

\[ d_\nabla\eta = 0\]
\[ d_f(\lambda)v_Q = F_\nabla(\eta)  \]

By \ref{donaldao2} above, if the first equation can be solved for $\nabla$, then the second can be solved for a positive $\lambda$ in any neighborhood of $Q$ where $\nabla$ is asymptotically flat. Hence we get a closed $G_2$-structure $\varphi$ making $p$ (and $\overline{p})$ a coassociative fibration.

Note that our $\eta$ is always $O(1)$. Since $\nabla$ is linear, $F_\nabla$ is $O(r)$, so the second equation can be solved for positive $\lambda$ in the interior of any compact neighborhood of $Q$ in $M_0$.


Let us examine the first equation in terms of the spinor construction of $\mf{V}$. Recall that in that situation, $\mf{V}\otimes \C \cong \slashed{S}_Q \otimes \mathcal{W}$ and we have an identification

\[ Id: TQ \stackrel{=}{\to} \Lambda^{2,+}\mf{V} \]

The condition on the metric connection $\nabla$ on $\mf{V}$ is then:

\beq d_\nabla (Id) = 0 \eeq
which just says that $\nabla$ comes from a torsion-free connection on $TQ$. Moreover, if $\mathcal{W} = \slashed{S}_Q^* \cong \slashed{S}_Q$, then since $\nabla$ is metric it must come from the Levi-Civita connection of $Q$.

We conclude that if one chooses the twisting isomorphism $\eta$ to be the partial topological twist of $7d$ $\mathcal{N}=1$ Super Yang-Mills theory (also known as the Chern-Simons twist), and if the connection $\nabla$ is chosen to be induced by $\eta$ from the Levi-Civita connection on $Q$, then there is a closed $G_2$-structure on a (arbitrarily large) neighborhood of $Q$ in $M_0$ making the fibers coassociative. Finally, the coassociative ADE fibration extends to the whole $M_0$ if and only if $Q$ is flat.


In this paper we are mostly interested in coassociative ADE fibrations defined on the whole orbifold $M_0$. Since linear connections are easier to work with, the reader will not be surprised that all examples discussed in the next section involve flat $3$-manifolds. Nevertheless, the framework developed here is flexible enough to allow more interesting constructions - e.g. by working with Ehresmann connections with appropriate decay rates.





\subsection{Examples of Coassociative ADE Fibrations}


\begin{example}
	$\mf{V} = \R^4\times \T^3$ has a standard closed $G_2$-structure:
	
	\beq \varphi = \sum_{i=1}^3 dx_i\wedge \omega_i + dx_{123} \label{g2} \eeq
where $\left\{ x_i \right\}$ are global flat coordinates on $\T^3$ and $\underline{\omega}=(\omega_1,\omega_2,\omega_3)$ is the standard hyperk\"ahler structure on $\R^4$. Here and in what follows, we use the notation $dx_{123} := dx_1\wedge dx_2 \wedge dx_3$. 

	It is easy to check that $\varphi|_{\C^2}=0$ and $\star\varphi|_{\T}=0$, so $\R^4$ is coassociative and $\T^3$ is associative. In fact, this $G_2$-structure is also torsion-free. Its associated metric is just the flat metric, which of course has holonomy $\left\{ 1 \right\} \subset G_2$.
	
	Although this example is rather trivial, it is instructive to understand it in the language we have introduced. We think of $\mf{V}\stackrel{\pi}{\to} \T^3$ endowed with the trivial connection $\nabla = d$ as being associated to the principal $Spin(3)$-bundle $P_{Spin(3)} \to \T^3$ (which is itself flatly trivial) via the standard representation of $Spin(3) \cong SU(2)$. Note that because there is no monodromy, formula \ref{g2} makes sense globally on $\T^3$.  
	
 Moreover, the connection $d$ allows us to construct a vertical cotangent bundle:

\[ T^*_d \mf{V} \to \mf{V} \]
whose sections are $1$-forms that vanish on horizontal vectors. Furthermore, over a fiber $\R^4_x := \pi^{-1}(x)$: 

\beq T^*_d \mf{V}|_{\R^4_x} \cong T^*(\R^4_x) \label{toiah}\eeq

Using the orientation of $\mf{V}$ we also get a vertical bundle of selfdual two-forms:

\[ \Lambda^{2,+}T^*_d\mf{V} \to \mf{V} \]

Because the fibers $\R^4_x$ are coassociative, using \ref{toiah} and the connection $d$ we get identifications:

	\[ \Lambda^{2,+}_dT^*_d\mf{V}|_{\R^4_x} \cong \mathcal{N}_{\R^4_x/\mf{V}} \stackrel{d}{\cong} \pi^*(T\T^3)|_{\R^4_x} \]
		
		It follows that for every $x \in \T^3$ there is an isomorphism of vector spaces
		
		\[ \Lambda^{2,+}T^*_d\mf{V}_{(x,0)} \cong T\T^3_x \]
and hence an isomorphism of vector bundles over $\T^3$:
		
		\beq \Lambda^{2,+}T^*_d\mf{V}|_{\T^3} \cong T\T^3 \label{topatd}\eeq

Conversely, given the isomorphism \ref{topatd} and the connection $d$, one can reconstruct a $3$-form $\varphi$ making the fibers coassociative.	
	
	
	
	
	
	
	
	
	Finally, since $\underline{\omega}$ is a $SU(2)_+$-triple, we can take $\Gamma \leq SU(2)_-$, so that $\underline{\omega}$ is $\Gamma$-invariant. Thus there is a well-defined $G_2$-structure on the quotient $\mf{V}/\Gamma$ with the same properties. 
	
\end{example}

\begin{example} \label{exemplodobob} (\emph{The Hantzche-Wendt $G_2$-orbifold})
	This will be our main example in this paper. It has appeared before as one of the local geometries near a singular stratum in Joyce's construction of compact $G_2$-manifolds \cite{joyce} (see also \cite{acharya}). 
	
	Consider $\mf{V}_0 = \R^4\times_{\K} \G_6$ the total space of a \emph{non-trivial} flat bundle over $\G_6$ induced from the holonomy bundle of $\G_6$ as follows: let $H_{\G_6}$ be the holonomy group of $\G_6$ so that
	
	\[ H_{\G_6} \cong \K = \langle \alpha, \beta ; \alpha^2=\beta^2=1 \rangle \cong \Z_2\times \Z_2 \] 
	
	and consider the action $\rho : \K \to SO(4)$ given by: 

	\[ \rho\alpha(z_1,z_2) = (z_1,-z_2) \]
	
	\[ \rho\beta(z_1,z_2) = (\overline{z_1}, \overline{z_2}) \]
	where we have introduced an auxiliary complex structure $\R^4 \cong \C^2$ and complex coordinates $z_1, z_2$.
	
	Fixing the flat hyperk\"ahler structure on $\R^4$:
	\[ \omega_1 = Im(dz_1\wedge dz_2) \]
	
	\[ \omega_2 = Re(dz_1\wedge dz_2) \]
		
		\[ \omega_3 = \frac{i}{2}\left( dz_1\wedge d\overline{z_1}+dz_2\wedge d\overline{z_2} \right) \] 
	one sees that the induced action of $\K$ on $\Omega^{2,+}(\R^4)$ is given by the following hyperk\"ahler rotations:
	
	\[ \rho\alpha(\omega_1,\omega_2,\omega_3) = (-\omega_1,-\omega_2,\omega_3) \]
	\beq \rho\beta(\omega_1,\omega_2,\omega_3) = (-\omega_1,\omega_2,-\omega_3) \label{abcde} \eeq
	
	Fix a simultaneous trivialization $\mathfrak{U}$ of $T\mathcal{G}_6$ and $\mf{V}_0$. For $U \in \mathfrak{U}$, let $x_1,x_2,x_3$ be locally flat coordinates, and fix orthonormal flat frames $\left\{ dx_1, dx_2, dx_3 \right\}$ for $T^*U$ and $\left\{ \omega_1, \omega_2, \omega_3 \right\}$ for $\Omega^{2,+}(\R^4)\times U$. Consider the problem of extending local forms
	
	\beq \varphi = \sum_{i=1}^3 dx_i\wedge \omega_i + dx_{123} \label{carita}\eeq
	from open subsets of the form $\R^4\times U$ to $\mf{V}_0$.  The monodromy transformations for the $dx_i$'s on local intersections are determined by the action of $H_{\mathcal{G}_6} \cong \K$ given by the matrices $A, B, AB$ in \ref{hwmatrizes}. One sees from \ref{abcde} that the $\omega_i$'s transform according to the same matrices. Since these matrices are symmetric, we conclude that the element:
	
	\beq \eta := \sum_{i=1}^3 dx_i\wedge \omega_i \eeq
	glues to a global flat section. Obviously $dx_{123}$ also glues to a global volume form on $\G_6$, so together they give a well-defined $G_2$-structure. This $G_2$-structure is closed and torsion-free; the associated metric has holonomy $\K \subset G_2$.

Now let $\Gamma \cong \Z_2 \leq SU(2)_-$ act on $\C^2$ in the natural way. It is easy to see that this action is compatible with $\rho$: this means that the monodromy representation of $\mf{V}$ is an element of $\Hom(\pi_1(\G_6),C_{SU(2)}(\Z_2))$, which is clear since $C_{SU(2)}(\Z_2) = SU(2)$. It follows that \ref{carita} descends to a $G_2$-structure with coassociative fibers on 

\[ N_0 := \mf{V}_0/\Z_2 = \C^2/\Z_2 \times_\K \G_6 \] 

We call $N_0$ the Hantzsche-Wendt $G_2$-orbifold.

There are three different smoothings of $N_0$ with compatible $\K$-actions, and Joyce \cite{joyce} extends the $G_2$-structure over them. The associated metric on these smoothings has holonomy $SU(2)\rtimes\K \subset G_2$.

	In this example we also have a bundle of vertical selfdual two-forms:
	
	\[ \Omega^{2,+}_\nabla(\mf{V}) \to \G_6 \]
	which is now a locally constant sheaf with local sections $(\underline{\omega})$. The discussion from the previous example generalizes to this setting, since coassociativity is a pointwise condition on $\G_6$. Hence we have an isomorphism:
	
	\[ \eta : \Lambda^{2,+}_\nabla \cong T\G_6 \]
	and conversely, given the isomorphism and $\nabla$, we can reconstruct $\varphi$.
	
	Note that if instead we had $\Gamma =\Z_n$, then $C_{SU(2)}(\Z_n) = U(1)$ does not contain $\K$. In this situation the singularity $\C^2/\Z_n$ acquires non-trivial monodromy dictated by $[\K,\Z_n]$. The issue here is that $\K$ is the image in $SO(4)$ of a $\K^2$ subgroup of $Spin(4)$ which is not contained in $C_{Spin(4)}(\Z_n) = SU(2)_+\times U(1)$, but is contained in $N_{Spin(4)}(\Z_n) = SU(2)_+\times U(1)^2$ - c.f. Remark \ref{normalizador}. We will explain in Example \ref{meuexemplo} how to fix this issue.
	
	In the next section we will study smoothings of ADE $G_2$-orbifolds via unfolding of singularities and how it connects to Riemannian Higgs bundles \ref{pwequations}. This will give us two new ways of computing the deformation space: one in terms of solutions to \ref{pwequations} and one in terms of the complex flat connection $A+i\theta$. We will check that both computations agree with Joyce's for $N_0$ in Section \ref{exemploexemplar}.
\end{example}

\begin{example} 

Let the setup be as in the previous example, except that we modify the value of $\rho$ on $\beta \in \K$ to be:

\[ \rho\beta(z_1,z_2)=(-\overline{z}_2,\overline{z}_1) \]

Then everything proceeds exactly as before, with an added bonus: now the action of $\Z_n \leq SU(2)_-$ commutes with $\rho$, so one gets a structure of coassociative ADE fibration on the quotient $\mf{V}/\Z_n$.

\end{example}

\begin{example} \label{meuexemplo} 
We now illustrate how to modify a given example via an affine isometry on the base.

The basic point is that the short exact sequence \ref{exactbieber} is not unique; one can modify the lattice $\Lambda$, for example by modifying its period along one direction, as long as one modifies the group $H_\pi$ accordingly.

To illustrate this point, start with the exact sequence for $\G_6$:
	
	\beq 1 \to \Z^3 \to \pi_1(\G_6) \to \K \to 1 \eeq
	
	Bieberbach's first theorem says that $\G_6$ is a quotient of the three-torus $\T$. This is realized via the following (free) action of $\K = \langle \alpha, \beta \rangle$ on $\T$:	
	
	\begin{align*}	\alpha(x_1,x_2,x_3) & = (-x_1+\frac{1}{2},-x_2,x_3+\frac{1}{2}) \\
	\beta(x_1,x_2,x_3) & = (-x_1, x_2+\frac{1}{2}, -x_3)
	\end{align*}
	
	So $\G_6 = \T/\K$. However, a second possible description is as follows: consider the lattice $L = 2\Z\oplus \Z\oplus\Z$ and let $\T' = \R^3/L$ be a torus isogenous to $\T$. Then $\G_6 = \T'/D_8$, where $D_8 \cong \Z_2 \rtimes \K$ is the dihedral group with $8$ elements. 
	
	Let $\alpha', \beta'$ be two generators of $D_8$ satisfying $(\alpha')^2=1$ and $(\beta')^4=1$. The action is given by:
	
	\begin{align*}	 	\alpha'(x_1,x_2,x_3) & = (-x_1+\frac{3}{4},-x_2,x_3+\frac{1}{2}) \\
	\beta'(x_1,x_2,x_3) & = (-x_1+\frac{1}{4}, x_2+\frac{1}{4}, -x_3)
	\end{align*}
	
	This provides a second short exact sequence for $\pi_1(\G_6)$:
	
	\beq 1 \to L \to \pi_1(\G_6) \to D_8 \to 1 \eeq
	
	Note that $(\beta')^2(x_1,x_2,x_3) = (x_1,x_2+\frac{1}{2},x_3)$ is a translation by an order 2 element generating the center $\Z_2 \leq D_8$, and as such it does not contribute to the holonomy.
	

Now consider the following action $\rho' : D_8 \to SO(4)$

\[ \rho'\alpha'(z_1,z_2) = (-z_1,z_2) \]
\beq \rho'\beta'(z_1,z_2) = (-i\overline{z_2}, i\overline{z_1}) \eeq

This action commutes with the action of $\Z_n \leq SU(2)_-$ for all $n$. Hence we can construct nontrivial flat $A_n$ data using this action. 

A quick computation shows that $\rho'(\beta')^2$ acts trivially on the hyperk\"ahler triple, and that $\rho'$ maps $D_8/\langle (\beta')^2 \rangle \cong \K = H_{\G_6}$ to the action \ref{abcde} above. Hence the quotient $\C^2/\Z_n\times_{D_8} \G_6$ inherits a structure of coassociative ADE fibration.
	
	The key feature in this example is that there is a central $\Z_2$-symmetry given by $\langle \rho'(\beta')^2 \rangle$, and for this reason the preimage of $\rho'(D_8)$ in $Spin(4)$ is another copy of $D_8$ sitting inside $C_{Spin(4)}(\Z_n)$. 
	
	Since $\rho'(\beta')^2$ acts trivially on the hyperk\"ahler triple, this symmetry is not visible at the level of the $G_2$-form, but as a non-geometric symmetry is must be remembered when applying the string duality explored in this paper. In physics jargon, $\rho'(\beta')^2$ gives rise to a \emph{B-field} on the dual Calabi-Yau space. Mathematically, the dual Calabi-Yau will inherit a flat $\Z_2$-gerbe. So in a precise sense, a nontrivial affine isometry gives rise to a nontrivial flat gerbe. 
	
	This example suggests that many ADE orbifolds with $G_2$-structures will carry non-trivial gerbes. Since our focus in this paper is on the $G_2$-structure itself, we will ignore the gerby structure and leave for future work to determine how it affects our constructions below.
\end{example}

\begin{example}
There is a simpler version of the previous example involving $Q = \G_2$ and $\Gamma \cong \Z_n$. One takes $\Z_2\times \Z_2$ generated by $\overline{\alpha}, \overline{\beta}$ and an action $\overline{\rho} : \Z_2\times \Z_2 \to SO(4)$ by:

\[ \overline{\rho}\overline{\alpha}(z_1,z_2) = (-z_1,z_2) \]
\[ \overline{\rho}\overline{\beta}(z_1,z_2) = (-z_1,-z_2) \]

This action commutes with $\Z_n$ and $\overline{\rho}\overline{\beta}$ acts trivially on the hyperk\"ahler triple $\underline{\omega}$. The generator $[\overline{\alpha}]$ of the quotient $\Z_2\times\Z_2/\langle \overline{\beta}\rangle \cong \Z_2$ acts as $\overline{\alpha}(\underline{\omega}) = (\omega_1,-\omega_2,-\omega_3)$ which mirrors the action of $H_{\G_2}$ on locally flat $1$-forms of $\G_2$. Hence there is a well-defined $G_2$-structure on the orbifold $\C^2/\Z_n\times_{\Z_2} \G_2$.

\end{example}

\section{A Deformation Family of Coassociative ALE Fibrations} \label{deformation}

\subsection{Kronheimer's Construction}

Recall that we denote by $\mathfrak{g}_c$ a simple complex Lie algebra McKay dual to a finite ADE group $\Gamma \leq SU(2)$, $\mathfrak{h}_c$ a Cartan subalgebra, and $W$ the Weyl group. The compact real form of $\mathfrak{g}_c$ is denoted $\mathfrak{g}$, and $\mathfrak{h}$ is the associated real Cartan subalgebra.

We start by reviewing the construction of the Brieskorn-Grothendieck versal deformation of the quotient singularity $\C^2/\Gamma$: let $x$ be a subregular nilpotent element of $\mathfrak{g}_c$ and complete it to a $\mathfrak{sl}(2,\C)$-triple $(x,h,y)$. Define the \emph{Slodowy slice}:

\beq \mathcal{S} = x + \mathfrak{z}_{\mathfrak{g}_c}(y) \subset \mathfrak{g}_c \eeq
where $\mathfrak{z}_{\mathfrak{g}_c}(y)$ is the centralizer of $y$, i.e. the kernel of the adjoint action of $G_c$ on $y$.

Consider the GIT adjoint quotient $\mathfrak{g}_c \to \mathfrak{g}_c//G_c$. Chevalley's theorem says that $\C[\mathfrak{g}_c]^{G_c} \cong \C[\mathfrak{h}_c]^W$, so $\mathfrak{g}_c//G_c \cong \mathfrak{h}_c/W$. Define $\Psi : \mathcal{S} \to \mathfrak{h}_c/W$ to be the restriction of $\mathfrak{g}_c \to \mathfrak{h}_c/W$ to $\mathcal{S}$.  

\begin{theorem} (Slodowy \cite{slodowy}): The family $\Psi$ has the following properties:
	\begin{enumerate}
		\item $\Psi$ is a flat, surjective holomorphic map
		\item $\Psi^{-1}(0) \cong \C^2/\Gamma$ 
		\item Given any other map $\Psi' : \mathcal{A} \to \mathcal{B}$ satisfying properties $1$ and $2$ there is a map $\beta : (\mathcal{B},b) \to (\mathfrak{h}_c/W,0)$ such that $\Psi' = \beta^*\Psi$. The map $\beta$ is not uniquely determined, but its derivative $d\beta_b$ is unique.\footnote{This uniqueness at the infinitesimal level is known as \emph{miniversality}. Any two miniversal deformations of an ADE singularity are isomorphic, and their reduced Kodaira-Spencer map is an isomorphism.}
		\item $\Psi$ is equivariant with respect to natural $\C^*$-actions on $\mathcal{S}$ and $\mathfrak{h}_c/W$
	\end{enumerate}
\end{theorem}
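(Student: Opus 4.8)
This is the classical Brieskorn--Grothendieck--Slodowy theorem, so the strategy is to assemble the four statements from the structure of the adjoint quotient and Brieskorn's simultaneous resolution, rather than to prove anything from scratch. First I would recall Kostant's transversality theorem: the Slodowy slice $\mathcal{S} = x + \mathfrak{z}_{\mathfrak{g}_c}(y)$ meets every adjoint orbit it intersects transversally, and $\dim \mathcal{S} = \dim \mathfrak{h}_c + 2$ since $x$ is subregular (the centralizer of a subregular nilpotent has dimension $r+2$). The $\mathfrak{sl}(2,\C)$-triple gives a grading of $\mathfrak{g}_c$ under $\mathrm{ad}(h)$, and one defines a modified $\C^*$-action on $\mathcal{S}$ combining the scaling $t \cdot z = t^2 z$ with $\exp(\log t \cdot \mathrm{ad}(h))$ so that $x$ is a fixed point and all weights on $\mathfrak{z}_{\mathfrak{g}_c}(y)$ are strictly positive; the induced action on $\mathfrak{h}_c/W$ is the one making the basic invariants homogeneous of their respective degrees. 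This handles part (4) essentially by construction, and it is the bookkeeping I would set up first because the $\C^*$-equivariance is used to prove the other parts (e.g.\ contractibility arguments for miniversality).

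Next, part (1): flatness of $\Psi$ follows because $\mathfrak{g}_c \to \mathfrak{h}_c/W$ is flat (it is the quotient by a reductive group acting on a smooth variety, with all fibers of the same dimension $\dim\mathfrak{g}_c - r$), and flatness is preserved under the base-transverse restriction to $\mathcal{S}$ by Kostant's transversality — concretely, $\Psi^{-1}(\xi) = \mathcal{S} \cap (\text{fiber over }\xi)$ is a transverse intersection of constant dimension $2$, so $\Psi$ is equidimensional with smooth $2$-dimensional total space and reduced target, whence flat. Surjectivity is immediate since the nilpotent cone (the fiber over $0$) already meets $\mathcal{S}$. For part (2) I would use that $\Psi^{-1}(0) = \mathcal{S} \cap \mathcal{N}$, the slice through the subregular nilpotent orbit inside the nilpotent cone $\mathcal{N}$; Brieskorn's theorem identifies this with the Kleinian singularity $\C^2/\Gamma$ of the corresponding ADE type — one can cite this, or sketch it via the simultaneous resolution $\widetilde{\mathcal{S}} \to \mathcal{S}$ pulled back from Grothendieck's simultaneous resolution $\widetilde{\mathfrak{g}_c} \to \mathfrak{g}_c$, whose central fiber is the minimal resolution with exceptional divisor a chain/tree of $\p^1$'s dual to the Dynkin diagram of $\Gamma$.

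The part requiring genuine argument is (3), miniversality. The plan is: given any flat family $\Psi' : \mathcal{A} \to \mathcal{B}$ with $\Psi'^{-1}(b) \cong \C^2/\Gamma$, compute the Kodaira--Spencer map and compare with that of $\Psi$. One shows the reduced Kodaira--Spencer map of $\Psi$ at $0$, namely $T_0(\mathfrak{h}_c/W) \to T^1_{\C^2/\Gamma}$ (the space of first-order deformations of the singularity), is an isomorphism — both sides have dimension $r$, the number of exceptional curves / Milnor number, and injectivity comes from the $\C^*$-equivariance (a nonzero kernel would be a nontrivial $\C^*$-invariant first-order deformation that is trivial, contradicting the weight comparison since all weights are positive). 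Versality of $\Psi$ then follows from the standard deformation-theory criterion: a family whose Kodaira--Spencer map is surjective onto $T^1$ is versal when the base is smooth, and here $\mathfrak{h}_c/W \cong \C^r$ is smooth. Existence of the classifying $\beta$ with unique derivative $d\beta_b$ is then the formal consequence of versality plus the isomorphism on $T^1$. The main obstacle is really the identification $T^1_{\C^2/\Gamma} \cong \mathfrak{h}_c$ together with the weight/degree matching that forces the Kodaira--Spencer map to be an isomorphism rather than merely surjective; everything else is either formal deformation theory or a citation to Brieskorn and Slodowy. I would be explicit that I am invoking Slodowy's monograph \cite{slodowy} for the hard identifications and only sketching the $\C^*$-equivariance and flatness, which are the parts that generalize cleanly to the fibered setting needed later in the paper.
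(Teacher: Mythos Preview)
The paper does not prove this theorem at all: it is stated with attribution ``(Slodowy \cite{slodowy})'' and used as a black box from the literature, with no argument given beyond the statement and its footnote on miniversality. Your proof plan is a reasonable and essentially correct outline of the classical argument as it appears in Slodowy's monograph --- Kostant transversality for flatness, Brieskorn's identification of the subregular slice fiber with the Kleinian singularity, the modified $\C^*$-action from the $\mathfrak{sl}(2,\C)$-grading, and the Kodaira--Spencer comparison for miniversality --- but there is nothing in the paper to compare it against. If anything, your plan overshoots what the paper requires: the author treats this as an imported result, and your own final remark that you would ``invok[e] Slodowy's monograph \cite{slodowy} for the hard identifications'' is exactly the stance the paper takes throughout.
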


In other words, $\Psi$ is the \emph{Brieskorn-Grothendieck $\C^*$-miniversal deformation of $\C^2/\Gamma$}. Thus, the Slodowy slice is a geometric realization of the deformations of $\C^2/\Gamma$ inside the Lie algebra $\mathfrak{g}_c$. 

This embedding of $\mathcal{S}$ into $\mathfrak{g}_c$ comes with a symmetry group. Let $Z= Z_{G_c}(x)\cap Z_{G_c}(y)$ be the \emph{reductive centralizer} of $x$ with respect to $h$.\footnote{The name is due to the fact that the identity component $Z^0$ is reductive, and the component groups $Z_{G_c}(x)/Z^0_{G_c}(x)$ and $Z/Z^0$ coincide.} Its action on $\mathcal{S}$ commutes with $\C^*$, so there is an action of $\C^*\times Z$ on $\mathcal{S}$. The action of $Z$ restricts to act on the fibers of $\Psi$ (i.e., $\Psi$ is $Z$-invariant). The group $\C^*\times Z$ is called the symmetry group of the Slodowy slice.

\begin{lemma}
	$Z \cong \C^*$ for $\mathfrak{g}_c$ of type $A_n$, and $Z = \left\{ e \right\}$ for types $D_n$ and $E_{6,7,8}$.
\end{lemma}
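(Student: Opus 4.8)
The plan is to compute the reductive centralizer $Z = Z_{G_c}(x) \cap Z_{G_c}(y)$ directly, exploiting the fact that $x$ is a subregular nilpotent and $(x,h,y)$ an $\mathfrak{sl}(2,\mathbb{C})$-triple. The key structural input is Slodowy's description of the reductive centralizer of a subregular nilpotent in terms of the \emph{Dynkin diagram symmetry} coming from the Brieskorn--Grothendieck picture: for a simply-laced $\mathfrak{g}_c$ the subregular Slodowy slice carries a natural action of the group $\mathrm{Out}(\mathfrak{g}_c)$ of diagram automorphisms on its component group, while the identity component $Z^0$ is a torus whose rank equals the number of nodes fixed by the associated folding — equivalently, the rank of $\mathfrak{g}_c$ minus the rank of the ``adjacent'' larger-type algebra in the McKay/folding correspondence. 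Concretely, I would just read off $Z$ from the classification of subregular nilpotent orbits: the component group $Z/Z^0$ is $\mathbb{Z}/2$ for type $A_n$ ($n\geq 2$), trivial for $A_1$, $\mathbb{Z}/2$ for $D_n$ ($n$ large) and $E_6$, and trivial for $D_4$, $E_7$, $E_8$ — but the statement here only records the \emph{identity component together with the component group as a single reductive group}, so I must be careful to match conventions; the cleanest route is the one below.

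First, I would recall (from Slodowy, as already cited) that $Z = Z_{G_c}(x,h,y)$ is reductive and acts on $\mathcal{S}$ preserving the fibration $\Psi$, and that its Lie algebra is $\mathfrak{z}_{\mathfrak{g}_c}(x) \cap \mathfrak{z}_{\mathfrak{g}_c}(h) \cap \mathfrak{z}_{\mathfrak{g}_c}(y) = \mathfrak{z}_{\mathfrak{g}_c}(\mathfrak{sl}_2)$, the centralizer of the $\mathfrak{sl}_2$-subalgebra spanned by the triple. So $\dim Z$ equals the multiplicity of the trivial $\mathfrak{sl}_2$-representation in the adjoint decomposition of $\mathfrak{g}_c$ under $(x,h,y)$. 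Second, for the subregular $\mathfrak{sl}_2$ one knows the weighted Dynkin diagram and hence this decomposition explicitly: in type $A_n$ the adjoint representation of $\mathfrak{sl}_{n+1}$ restricted to the subregular $\mathfrak{sl}_2$ (corresponding to the partition $(n, 1)$) decomposes with exactly one trivial summand, giving $\dim Z = 1$; in types $D_n$ ($n\geq 4$) and $E_{6,7,8}$ the subregular partition/weighted diagram yields \emph{no} trivial summand, so $\dim Z = 0$. Third, having pinned down the dimension, I identify $Z$ as a group: in the $A_n$ case the one-dimensional reductive group $Z$ is connected (the subregular nilpotent in $\mathfrak{sl}_{n+1}$ has connected centralizer in $PGL$, or one checks directly that the stabilizer of the pair of Jordan blocks of sizes $(n,1)$, intersected with the $\mathfrak{sl}_2$-centralizer, is a single $\mathbb{C}^\times$ scaling the small block against the big one), hence $Z \cong \mathbb{C}^\times$; in the $D$ and $E$ cases $\dim Z = 0$ forces $Z$ finite, and one checks $Z = \{e\}$ because any such automorphism would have to act on the Slodowy slice $\mathcal{S} \cong$ the versal deformation compatibly with $\Psi$ and fixing the distinguished section, which for $D_n$ (resp. $E_{6,7,8}$) leaves no room — the diagram automorphism the reader might expect to appear is instead realized by the residual $\mathbb{C}^\times$ or is absorbed into the Weyl action on $\mathfrak{h}_c/W$, not into $Z$.

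The main obstacle, and the place I would be most careful, is the last identification step for the $D$ and $E$ cases: distinguishing $Z = \{e\}$ from $Z = \mathbb{Z}/2$. The subtlety is that the relevant ``extra symmetry'' of the subregular singularity (the symmetry whose quotient realizes the non-homogeneous/folded deformation, e.g.\ $D_4 \to \ldots$, or the $\mathbb{Z}/2$ relating the two ways of resolving a $D_n$ singularity) genuinely exists as an automorphism of the total space of $\Psi$, but it does \emph{not} lie in the image of $G_c$ acting by conjugation fixing the triple — it is an \emph{outer} symmetry. Since $Z$ is defined as the intersection of centralizers inside $G_c$ (an inner notion), these outer symmetries are excluded, and $Z$ is genuinely trivial. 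I would make this precise by appealing to Slodowy's theorem that the full symmetry group of the Slodowy slice is $\mathbb{C}^\times \times Z$ with $Z$ the reductive centralizer, contrasted with the larger symmetry group $\mathbb{C}^\times \times (Z \rtimes \mathrm{Out})$ of the \emph{deformation} $\mathcal{S}/W'$; the point of the lemma is precisely to record the inner part $Z$, which is what acts on $\mathcal{S}$ itself. Everything else is a finite check over the four families, and I would organize it as a short case analysis keyed to the subregular partition (types $A$, $D$) and the Bala--Carter label (types $E$), citing Slodowy for the decomposition data rather than recomputing weighted Dynkin diagrams by hand.
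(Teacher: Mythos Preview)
The paper's own ``proof'' is a single sentence: it refers the reader to Slodowy's book and gives no argument whatsoever. So you are not being asked to match a proof --- you are being asked to supply one, and your outline does substantially more than the paper.

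That said, your write-up needs tightening in two places. First, the opening paragraph is muddled: the component-group claims you list (``$\mathbb{Z}/2$ for $A_n$ with $n\ge 2$'', etc.) are not correct for the reductive centralizer of the subregular $\mathfrak{sl}_2$-triple, and you yourself back away from them two lines later. Just delete that paragraph; your second paragraph already contains the right argument, namely that $\mathrm{Lie}(Z)=\mathfrak{z}_{\mathfrak{g}_c}(\mathfrak{sl}_2)$ has dimension equal to the multiplicity of the trivial summand in $\mathfrak{g}_c|_{\mathfrak{sl}_2}$, which is $1$ for $A_n$ (partition $(n,1)$) and $0$ for $D_n$, $E_{6,7,8}$. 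That computation is correct and is exactly how Slodowy does it.

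Second, your handling of the component group in the $D$ and $E$ cases is the genuinely delicate step, and your ``inner vs.\ outer'' heuristic is not quite the right mechanism. Concretely, for $D_n$ with subregular partition $[2n-3,3]$, the centralizer of the triple in $SO(2n)$ is $\{(1,1),(-1,-1)\}\cong\mathbb{Z}/2$, not $\{e\}$; what makes $Z$ trivial is that $(-1,-1)=-I$ lies in the center, so it dies in the \emph{adjoint} group. In other words, the statement $Z=\{e\}$ is sensitive to the isogeny class of $G_c$, and Slodowy's result (which the paper is invoking) is for the adjoint group. You should either say this explicitly or, as the paper does, simply cite Slodowy for the group-level identification once you have pinned down $\dim Z$.
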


\begin{proof}
	See Slodowy's book \cite{slodowy}.
\end{proof}

Kronheimer \cite{kronheimer1} identified the moduli of ALE hyperk\"ahler $4$-manifolds with smoothings of $\C^2/\Gamma$. The construction goes as follows: first one shows that for every $\Gamma$ there is a flat simply-connected hyperk\"ahler manifold $(Y_\Gamma,I,J,K)$ and a compact Lie group $F$ acting on $Y_\Gamma$ by hyperk\"ahler isometries. Let $\mathfrak{f}$ denote the Lie algebra of $F$ and $\mathfrak{z}$ its center. Then one constructs a \emph{hyperk\"ahler moment map}:

\beq \mu = (\mu_1,\mu_2,\mu_3) : Y_\Gamma \to \R^3\otimes \mathfrak{f}^* \eeq
such that for $\xi \in \R^3\otimes \mathfrak{z}^*$ the \emph{hyperk\"ahler quotient}:

\beq S_\xi  = \mu^{-1}(\xi)/F \eeq
is well-defined and is a (possibly singular) hyperk\"ahler space. In particular, it has a hyperk\"ahler triple $(I_\xi,J_\xi,K_\xi)$ induced from $Y_\Gamma$.

A crucial property of the construction is that there is an isomorphism:
\[ \mathfrak{z}^* \cong \mathfrak{h} \]

Using this identification, one can then prove that $\forall \xi \in \R^3\otimes\mathfrak{h}$,  the hyperk\"ahler space $S_\xi$ has a compatible ALE metric, and that $S_\xi$ is non-singular if and only if $\xi \notin \R^3\otimes\Delta$, where $\Delta := \bigcup_{\upsilon} \Pi_\upsilon \subset \mathfrak{h}$ is the union of hyperplanes $\Pi_\upsilon$ orthogonal to a root $\upsilon$.

The relation between Kronheimer's construction and $\Psi$ goes as follows: fix an identification $\R^3 = \R \oplus \C$ and consider the \emph{complexified moment map} $\mu_c := \mu_2+i\mu_3 : Y_\Gamma \to \C\otimes \mathfrak{f}^*$. Then, for $\chi_2+i\chi_3 \in \C\otimes \mathfrak{h} = \mathfrak{h}_c$: 

\beq S_{(0,\chi_2,\chi_3)} = \big(\mu_1^{-1}(0)\cap\mu_c^{-1}(\chi_2+i\chi_3)\big)/G \eeq
is an affine variety with respect to the complex structure $I_{(0,\chi_2,\chi_3)}$. After passing to a normalization, these spaces fit into the \emph{Kronheimer deformation family:}

\beq \Theta_0 : \mathcal{K}_0 \to \C\otimes\mathfrak{z}^* \cong \mathfrak{h}_c \eeq
which is a surjective flat holomorphic map with $\Theta^{-1}_0(0) \cong \C^2/\Gamma$.

In this discussion, there is a preferred generator $\omega_c$ of the relative canonical bundle of $\mathcal{K}_0$. It is induced by the $I$-holomorphic symplectic form $\omega_J + I\omega_K$ on $Y_\Gamma$ and it satisfies $[\omega_c|_{\Theta_0^{-1}(\chi_2+i\chi_3)}] = \chi_2+i\chi_3$. Conversely, a choice of generator $\omega_c$ induces a period map:

\[ \mathscr{P} : \C\otimes\mathfrak{z}^* \stackrel{\cong}{\to} \mathfrak{h}_c \]
sending $\chi_c$ to $[\omega_c|_{\Theta_0^{-1}(\chi_c)}]$.

Recall that Slodowy's deformation family $\Psi : \mathcal{S} \to \mathfrak{h}_c/W$ is versal for $\C^2/\Gamma$, so $\Theta_0$ must be induced from it by pullback from a map between the parameter spaces. Kronheimer proved that $\Theta_0$ is equivariant with respect to a $\C^*$-action on $\mathcal{K}_0$ and weight $2$ dilations on $\mathfrak{h}_c$. Due to Looijenga's description of the period map for $\Psi$ \cite{looijenga}, it follows that $\Theta_0$ is induced from $\Psi$ via pullback by the projection map $p_W : \mathfrak{h}_c \to \mathfrak{h}_c/W$. 

\begin{definition} Fix non-zero $\chi, \chi_2, \chi_3 \in \mathfrak{h}$. We say $\xi=(\chi_1,\chi_2,\chi_3) \in \R^3\otimes\mathfrak{h}$ is \emph{generic} if $\xi \notin \R^3\otimes \Pi_\upsilon$ for any root $\upsilon$.
\end{definition}

If $\xi$ is generic, the space $S_\xi$ is a nonsingular hyperk\"ahler manifold, and there is a \emph{resolution of singularities} $r _\chi: S_\xi \to S_{(0,\chi_2,\chi_3)}$. Tjurina \cite{tjurina}, building on previous work of Brieskorn \cite{brieskorn}, proved that a flat holomorphic map $f : R \to B$ with two-dimensional fibers admitting at most finitely many rational double points admits a local resolution of singularities: around any point $b \in B$ there is an open set $U \subset B$ such that the family $f|_{f^{-1}(U)}$ admits a simultaneous resolution of all fibers, i.e. a commutative diagram whose maps restricted to the fibers are resolutions of singularities. Kronheimer's construction gives the simultaneous resolution for the Brieskorn-Grothendieck $\C^*$-miniversal deformation. 

Since any choice of $\chi \in \mathfrak{h} \setminus \Delta$ makes $\xi$ generic, it induces a \emph{simultaneous resolution} ${\Theta}_{\chi} : {\mathcal{K}}_\chi \to \mathfrak{h}_c$ of $\Theta_0$ (i.e., ${\Theta}_\xi = \Theta_0 \circ r_\xi$).

We summarize this discussion in the following:

\begin{theorem} (Brieskorn, Kronheimer, Slodowy, Tjurina): For every $\chi \in \mathfrak{h} \setminus \Delta$, there is a commutative diagram:
	
	\[ \begin{tikzcd}
	{\mathcal{K}_\chi} \arrow[swap]{r}{r_\chi}   \arrow[swap]{d}{{\Theta}_\chi}  & \mathcal{K}_0 \arrow[swap]{d}{\Theta_0} \arrow[rightarrow]{r}{} &  \mathcal{S} \arrow[swap]{d}{\Psi} \\   																				
	\mathfrak{h}_c \arrow[rightarrow]{r}{=} 										  &             \mathfrak{h}_c \arrow[rightarrow]{r}{p_W}   		   & \mathfrak{h}_c/W 		 
	\end{tikzcd}
	\]
	satisfying the following properties:
	
	\begin{enumerate}
		\item ${\Theta}_\chi$ is a flat, surjective holomorphic map, with fibers diffeomorphic to the minimal resolution $\til{\C^2/\Gamma}$ of $\C^2/\Gamma$ and admitting an ALE hyperk\"ahler structure
		\item ${\Theta}_\chi$ is a simultaneous resolution of $\Theta_0$, i.e., $r_\chi|_{S_{\xi}}$ is a resolution of singularities of $S_{(0,\chi_2,\chi_3)}$
		\item ${\mathcal{K}}_\chi$ inherits a $\C^*$-action from $Y^k$ such that ${\Theta}_\chi$ is $\C^*$-equivariant
	\end{enumerate} \label{krony}
\end{theorem}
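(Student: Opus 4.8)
The statement is an assembly result, so the plan is not to prove anything new but to fit together the pieces already recalled: the backbone is Kronheimer's hyperk\"ahler quotient construction, the structure of the fibers comes from the theory of simultaneous resolutions of rational double points (Brieskorn--Tjurina), and the one genuinely non-formal ingredient is Looijenga's computation of the period map of $\Psi$, which is what canonically matches the two parameter spaces in the right-hand square. First I would fix $\chi\in\mathfrak{h}\setminus\Delta$ and construct $\Theta_\chi$ and $r_\chi$ directly from the moment map. For $\chi_c=\chi_2+i\chi_3\in\C\otimes\mathfrak{h}=\mathfrak{h}_c$ put $\xi:=(\chi,\chi_2,\chi_3)$; this is generic, since $\xi\in\R^3\otimes\Pi_\upsilon$ for a root $\upsilon$ would force $\chi\in\Pi_\upsilon\subset\Delta$. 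By Kronheimer's theorem $S_\xi=\mu^{-1}(\xi)/F$ is then a smooth ALE hyperk\"ahler $4$-manifold carrying the complex structure $I$; keeping $I$ fixed and letting $\chi\to 0$ recovers the fibre $S_{(0,\chi_2,\chi_3)}$ of $\Theta_0$ over $\chi_c$, and the tautological comparison map $S_\xi\to S_{(0,\chi_2,\chi_3)}$ is, by Kronheimer's construction, a minimal resolution of singularities (an isomorphism where the target is already smooth). Letting $\chi_c$ vary holomorphically and normalizing exactly as for $\Theta_0$ assembles these into a holomorphic family $\Theta_\chi:\mathcal{K}_\chi\to\mathfrak{h}_c$ together with a map $r_\chi:\mathcal{K}_\chi\to\mathcal{K}_0$ over $\mathrm{id}_{\mathfrak{h}_c}$ restricting on fibers to these resolutions. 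This yields the left square, the identity $\Theta_\chi=\Theta_0\circ r_\chi$, and property (2).

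Next I would clear the remaining clauses of (1) and (3). Smoothness of the total space $\mathcal{K}_\chi$ is Kronheimer's transversality statement at generic moment-map levels; flatness and surjectivity of $\Theta_\chi$ then follow from the miracle-flatness criterion ($\mathcal{K}_\chi$ Cohen--Macaulay, $\mathfrak{h}_c$ regular, all fibers $2$-dimensional) together with surjectivity of $\Theta_0$, which $r_\chi$ covers. That every fiber is diffeomorphic to $\til{\C^2/\Gamma}$ is precisely the Brieskorn--Tjurina simultaneous-resolution theorem quoted above: all fibers of a simultaneous resolution of a family of rational double points are diffeomorphic to the minimal resolution of any one of them, and $\Theta_\chi$ globalizes this, while the ALE hyperk\"ahler metric on each fiber is again Kronheimer's output for generic $\xi$. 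For (3), the dilation $\C^*$-action on $Y_\Gamma$ commutes with the $F$-action and the hyperk\"ahler quotient, hence descends to $\mathcal{K}_\chi$ compatibly with $r_\chi$, and $\Theta_\chi$ is equivariant for the induced action on $\mathfrak{h}_c$.

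Finally I would establish the right square. Since $\Theta_0$ is a flat deformation of $\Theta_0^{-1}(0)\cong\C^2/\Gamma$, Slodowy's versality theorem produces a classifying map $\beta:(\mathfrak{h}_c,0)\to(\mathfrak{h}_c/W,0)$ with $\Theta_0=\beta^*\Psi$, unique to first order; the content of the square is that $\beta=p_W$. I would extract this from the $\C^*$-equivariance: both $\Psi$ and $\Theta_0$ are equivariant for the natural $\C^*$-actions acting by weight-$2$ dilations on the parameter spaces, which forces $\beta$ to be linear, and then Looijenga's description of the period map $\mathscr{P}$ of $\Psi$ (together with the normalization fixed by the chosen generator $\omega_c$ of the relative canonical bundle) pins $\beta$ down as $p_W$ up to the $W$-action. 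This last identification is the step I expect to be the main obstacle: every other clause is a reorganization of Kronheimer's and Tjurina's results, but matching the two parameter spaces canonically genuinely needs the period-map input. A secondary subtlety worth checking is that the fiberwise resolutions really glue to a \emph{holomorphic} map $r_\chi$ of the \emph{normalized} total spaces, not merely a map of the underlying families of analytic sets; this uses that the normalization is taken compatibly across the family and that Kronheimer's quotient depends holomorphically on $\chi_c$.
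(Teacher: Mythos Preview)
Your proposal is correct and follows essentially the same route as the paper. The paper does not give a separate proof of this theorem; it is stated as a summary of the discussion immediately preceding it, and that discussion invokes exactly the ingredients you do---Kronheimer's hyperk\"ahler quotient for the construction of $\Theta_\chi$ and the fiberwise resolutions, Brieskorn--Tjurina for simultaneous resolution, Slodowy's versality to produce the classifying map for the right square, $\C^*$-equivariance with weight-$2$ dilations, and Looijenga's period-map description to pin the classifying map down as $p_W$. Your write-up is somewhat more explicit (e.g.\ invoking miracle flatness for property (1), and flagging the holomorphic gluing of $r_\chi$ on normalized total spaces), but these are elaborations of what the paper leaves implicit rather than a different argument.
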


Moreover, one can also prove:

\begin{theorem} (Kronheimer \cite{kronheimer2}): Given a (smooth) hyperk\"ahler ALE space $S$, there is a generic $\xi = (\chi,\chi_2,\chi_3)$ such that $S \cong S_\xi$ as hyperk\"ahler manifolds.
	\label{kronaaaldo}
\end{theorem}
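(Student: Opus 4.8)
The plan is to strip $S$ down to the combinatorial and period data that index Kronheimer's family, produce a candidate parameter $\xi$, check that it is generic, and then identify $S$ with the model $S_\xi$ by a Torelli-type argument. First I would recover the group: the end of an ALE space is diffeomorphic to $(\R^4\setminus B^4)/\Gamma$ for a unique finite $\Gamma \leq SU(2)$, read off as the fundamental group of the link at infinity, and this fixes the McKay-dual data $\mathfrak g, \mathfrak h, W$. Using the fact (Kronheimer) that a hyperk\"ahler ALE $4$-manifold has $b_1 = 0$ and is diffeomorphic to the minimal resolution $\til{\C^2/\Gamma}$, I would mark $H^2(S,\Z)$ isometrically with the root lattice $\mathfrak h$ --- so that the intersection form becomes minus the Cartan matrix --- and set $\xi := ([\omega_1],[\omega_2],[\omega_3]) \in \R^3\otimes\mathfrak h$, the triple of periods of the hyperk\"ahler forms $(\omega_1,\omega_2,\omega_3)$ of $S$. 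What then remains is (i) that $\xi$ is generic, and (ii) a uniqueness statement: a marked hyperk\"ahler ALE space asymptotic to $\C^2/\Gamma$ is determined by its period point in the generic locus $\mathcal R := (\R^3\otimes\mathfrak h)\setminus\bigcup_\upsilon(\R^3\otimes\Pi_\upsilon)$.

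For (i) I would argue by contradiction: if $[\omega_i]\cdot\upsilon = 0$ for $i = 1,2,3$ and some root $\upsilon$, pass to the complex structure $I_1$, in which $(S,I_1)$ maps to its affinization $\bar S$, a deformation of $\C^2/\Gamma$ whose rational double points are indexed (up to $W$) by the sub-root-system killed by the holomorphic-symplectic period $[\omega_2]+i[\omega_3]$; since that period vanishes on $\upsilon$, the surface $\bar S$ is singular, $(S,I_1)\to\bar S$ is its minimal resolution, and a standard Riemann--Roch argument shows $\upsilon$ or $-\upsilon$ is represented by a nonempty effective configuration $\sum_j n_j C_j$ of compact $(-2)$-curves. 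Then $0 = [\omega_1]\cdot\upsilon = \pm\sum_j n_j\int_{C_j}\omega_1 > 0$, since $\omega_1$ is a K\"ahler form on $(S,I_1)$ --- a contradiction, so $\xi \in \mathcal R$. For the existence half of (ii), Kronheimer's first construction \cite{kronheimer1} applied to this generic $\xi$ (cf.\ Theorem \ref{krony} and the identification $\mathfrak z^*\cong\mathfrak h$) produces a smooth hyperk\"ahler ALE manifold $S_\xi = \mu^{-1}(\xi)/F$, asymptotic to $\C^2/\Gamma$, whose hyperk\"ahler periods equal $\xi$ under the same marking.

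For the uniqueness half of (ii) I would study the marked moduli space $\mathcal M_\Gamma$ of such metrics together with its period map $\mathcal P : \mathcal M_\Gamma \to \mathcal R$. Since $H^1(S) = 0$, deformations of ALE hyperk\"ahler structures are unobstructed and the infinitesimal period map identifies $T\mathcal M_\Gamma$ with $H^2(S;\R^3) = \R^3\otimes\mathfrak h = T\mathcal R$, so $\mathcal P$ is a local diffeomorphism; next I would show $\mathcal P$ is proper, i.e.\ a sequence of marked ALE metrics with periods in a compact subset of $\mathcal R$ has uniformly bounded geometry and subconverges to an ALE metric with the limiting periods (the distance of $\xi$ from the walls $\Pi_\upsilon$ bounds the volumes of the $(-2)$-cycles and the injectivity radius from below, preventing collapse or bubbling). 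A proper local diffeomorphism onto the connected base $\mathcal R$ is a finite covering, and counting sheets against the explicit model $S_\xi$ (e.g.\ at a distinguished point, using the $\C^*$-equivariance in Theorem \ref{krony}) shows it is one-sheeted; hence $\mathcal P$ is a bijection, and applying it to $S$ and $S_\xi$ yields the hyperk\"ahler isometry $S \cong S_\xi$. Forgetting the marking accounts for the residual $W$-symmetry.

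The hard part will be this uniqueness half: the properness of $\mathcal P$ and the sheet count are the analytic core of \cite{kronheimer2}, requiring uniform elliptic estimates for the hyperk\"ahler equations on ALE ends, a no-collapse argument tied to the distance from the root hyperplanes, and a compactness theorem for ALE Ricci-flat metrics with bounded $L^2$-curvature --- none of which reduces to the algebraic statements (Slodowy, Brieskorn--Tjurina, Kronheimer's family of Theorem \ref{krony}) available above. By comparison, the data extraction and the genericity argument are essentially formal, modulo the standard topology of hyperk\"ahler ALE surfaces.
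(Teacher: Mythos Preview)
The paper does not prove this statement: Theorem \ref{kronaaaldo} is stated with attribution to Kronheimer \cite{kronheimer2} and immediately followed by the next subsection, with no argument supplied. It is quoted purely as background for the deformation-family construction that follows, so there is no ``paper's own proof'' against which to compare your proposal.

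That said, your sketch is a reasonable outline of a Torelli-type argument, and the data extraction, the genericity step (i), and the appeal to \cite{kronheimer1} for surjectivity onto $\mathcal R$ are all correct. Where your outline departs from what is actually in \cite{kronheimer2} is the uniqueness step (ii): Kronheimer does \emph{not} prove injectivity of the period map by showing $\mathcal P$ is a proper local diffeomorphism via ALE compactness estimates. His argument is twistor-theoretic and algebro-geometric: given two ALE hyperk\"ahler $4$-manifolds with the same $\Gamma$ and the same period triple, he compares their twistor families over $\C\p^1$, identifies each fiber as an affine deformation of $\C^2/\Gamma$ using the holomorphic-symplectic period (this is where Looijenga's period map \cite{looijenga} and the Brieskorn--Slodowy picture enter), and concludes that the twistor spaces, and hence the hyperk\"ahler structures, are isomorphic. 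The properness route you describe --- uniform elliptic estimates on ALE ends, non-collapsing tied to distance from root walls, $L^2$-curvature compactness --- is a plausible alternative strategy in the spirit of later gravitational-instanton work, but it is not what \cite{kronheimer2} does, and none of those analytic inputs are available in the present paper. If you want a proof faithful to the cited reference, replace your covering-map paragraph with the twistor/affine-variety comparison.
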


\subsection{The Twistor Family of ALE Hyperk\"ahler Structures}

One should think of the base $\mathfrak{h}_c$ of the Kronheimer family ${\mathcal{K}}_\chi$ as parametrizing \emph{infinitesimal deformations of the holomorphic symplectic structure on $\til{\C^2/\Gamma}$}. The reason is the following: let $\mathfrak{h}_{>}$ be the \emph{positive Weyl chamber} in $\mathfrak{h}$. Via the McKay correspondence, $\mathfrak{h}_{>}$ is identified with the cone of K\"ahler classes of $\til{\C^2/\Gamma}$, with tangent spaces $\mathfrak{h}$. A choice of complex structure on $\til{\C^2/\Gamma}$ induces an isomorphism $T(\mathfrak{h}_{>})\otimes\C \cong \mathfrak{h}_c$, so the deformation parameter is a complexified K\"ahler class, which is in fact holomorphic \cite{hitchinkarlhedelindstromrocek}

For our purposes, we need to make a clear distinction between deformations of a \emph{holomorphic symplectic structure} (HS) and deformations of a \emph{hyperk\"ahler structure} (HK). The main point is that, even though Kronheimer's construction produces all HK ALE spaces, it does not fit them \emph{all} together in a family induced from the Slodowy slice. In order to write diagram in Theorem \ref{krony}, one needs to fix a complex structure $I$ and an element $\chi \in \mathfrak{h}$. This fixes the HK-structure but does not account for HK deformations coming from resolving $\widetilde{\C^2/\Gamma}$. One way to see this is by noting that the minimal resolution contains projective curves, while deformations are affine varieties. However, for our purposes we will need to work with the full HK family, so one needs to introduce the family of all \emph{smoothings} of $\C^2/\Gamma$ - deformations \emph{and} resolutions. As we will see, in doing so one loses the holomorphic nature of the construction, which is in fact a good thing - there is no preferred complex structure in a HK deformation.

Let $Im\mathbb{H} = \R^3$ be the adjoint representation of $SU(2)_+$. In comparing ${\Theta}_{\chi'}$ and ${\Theta}_{\chi}$, they correspond to different choices of K\"ahler classes for a fixed complex structure inducing a linear isomorphism $\R^3 \cong \R\oplus\C$. In other words, the complex structure $I$ is selected on every fiber once a choice of  splitting $\text{Im}\mathbb{H} \cong \R\oplus\C$ has been made, or equivalently, a choice of $U(1) \subset SU(2)_+$. We write $\mathfrak{h}_{\mathbb{H}} := \mathfrak{h}\otimes \text{Im}\mathbb{H}$.

Under the McKay identification $\mathfrak{h} \cong H^2(S_\xi,\R)$, for every fixed $\xi=(\chi,\chi_2,\chi_3)$, one should think of the deformation parameter: 

\beq \chi_2+i\chi_3  \in \mathfrak{h}_c \setminus  \Delta_\C\eeq
as a choice of cohomology class for a $I_{(\chi,\chi_2,\chi_3)}$-holomorphic symplectic form on the fiber ${\Theta}^{-1}_{\chi}(\chi_2+i\chi_3) \cong S_\xi$. 

This is where the distinction between the HS and HK structures on the fibers comes in. For each $\chi \in \mathfrak{h}$, the family ${\Theta}_{\chi}$ parametrizes HS-deformations of $S_{(\chi,0,0)}$: there is a relative two-form $\omega_c\in \Omega^2_{\Theta_\chi}({\mathcal{K}}_\chi)$ that restricts to a a holomorphic symplectic form $\omega_c^{\chi_2+i\chi_3}$ on every fiber $\Theta_{\chi}^{-1}(\chi_2+i\chi_3) = S_{(\chi,\chi_2,\chi_3)}$, varying holomorphically with $\chi_2+i\chi_3 \in \mathfrak{h}_c$. It is clear that for $\chi' \neq \chi$, the manifolds $S_{(\chi',\chi_2,\chi_3)}$ and $S_{(\chi,\chi_2,\chi_3)}$ are isomorphic as holomorphic symplectic manifolds.

However, because $\chi' \neq \chi$, they define distinct hyperk\"ahler manifolds. The difference can be illustrated in the compact case by the following proposition, which is a consequence of Yau's solution to the Calabi conjecture:

\begin{proposition}
	Let $(S,\Omega)$ be a compact holomorphic symplectic manifold with respect to a complex structure $I$ and $\alpha \in H^{1,1}(S)$ a K\"ahler class. Then there is a unique hyperk\"ahler structure $(I,J,K)$ on $S$ such that $[\omega_I] = \alpha$ and $\Omega = \omega_J+i\omega_K$.
\end{proposition}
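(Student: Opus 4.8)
The plan is to reduce the statement to Yau's theorem on the existence of Ricci-flat Kähler metrics on compact Kähler manifolds with vanishing first Chern class. First I would observe that a compact holomorphic symplectic manifold $(S,I,\Omega)$ has trivial canonical bundle: the top exterior power $\Omega^{\wedge n}$ (where $\dim_{\C}S = 2n$) is a nowhere-vanishing holomorphic section of $K_S$, so $c_1(S) = 0$ in $H^2(S,\R)$. Given the Kähler class $\alpha \in H^{1,1}(S)\cap H^2(S,\R)$, Yau's theorem provides a unique Kähler metric $g$ with Kähler form $\omega_I$ in the class $\alpha$ whose Ricci form vanishes. The holonomy of $g$ is then contained in $SU(2n)$, but the presence of the parallel holomorphic symplectic form forces a further reduction.

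The key steps, in order, are: (1) normalize $\Omega$ by a constant so that $\Omega^{\wedge n}\wedge\overline{\Omega^{\wedge n}}$ agrees up to the standard combinatorial factor with $\omega_I^{\wedge 2n}$ — this is where the hypothesis that $\Omega$ is \emph{compatible} with the volume is used, and it is possible precisely because the ratio of these two top forms is a positive function whose integral can be matched and which, being the modulus of a holomorphic function of the metric data after Yau, turns out to be constant by the Bochner/maximum-principle argument underlying the Calabi-Yau theorem; (2) conclude that the Ricci-flat metric $g$ has holonomy in $Sp(n)$ rather than merely $SU(2n)$, because the holomorphic symplectic form $\Omega$ is parallel with respect to the Levi-Civita connection of a Ricci-flat Kähler metric (a standard consequence of the Bochner formula: a holomorphic form on a compact Ricci-flat Kähler manifold is parallel); (3) from a reduction of holonomy to $Sp(n)$, extract the hyperkähler triple — the parallel structure gives complex structures $J$ and $K = IJ$ with $I$, $J$, $K$ satisfying the quaternion relations, and one sets $\omega_J + i\omega_K$ proportional to $\Omega$ and $\omega_I$ equal to the Yau form; (4) prove uniqueness by noting that $J$ and $K$ are determined by $\Omega$ and the metric via $\Omega = \omega_J + i\omega_K$ together with compatibility with $g$, and $g$ itself is uniquely determined in its cohomology class by Yau.

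In the $4$-dimensional case relevant here ($n=1$, so $Sp(1) = SU(2)$), these steps simplify considerably: a Ricci-flat Kähler metric on a complex surface with trivial canonical bundle is automatically hyperkähler, since $SU(2) = Sp(1)$, so step (2) and (3) are immediate and the only real content is step (1), matching the volume normalization so that $\Omega$ literally becomes $\omega_J + i\omega_K$ rather than a multiple. The hard part will be step (1): one must verify that after solving the complex Monge-Ampère equation to put $\omega_I$ in the class $\alpha$, the resulting metric volume form is \emph{exactly} (not just cohomologically) the one determined by $\Omega\wedge\overline\Omega$, which requires feeding the correct volume form into Yau's equation as the prescribed right-hand side — so the logical order is really to apply Yau with the target volume form $\tfrac{(-1)^{n^2/?}}{2^n}\Omega^{\wedge n}\wedge\overline{\Omega^{\wedge n}}$ built in from the start, after rescaling $\Omega$ by the constant that makes its total integral equal to $\int_S \alpha^{2n}/(2n)!$. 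Once that is set up the rest is formal, and I would remark that this proposition is included only to motivate the non-compact ALE statement, where the analogous assertion follows instead from Kronheimer's Theorems \ref{krony} and \ref{kronaaaldo} rather than from Yau's theorem.
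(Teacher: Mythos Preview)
The paper does not supply a proof of this proposition: it is stated without argument, with the preceding sentence attributing it to ``Yau's solution to the Calabi conjecture.'' Your proposal is precisely the standard derivation (essentially Beauville's argument): trivialize $K_S$ via $\Omega^{\wedge n}$, invoke Yau to produce the unique Ricci-flat K\"ahler metric in the class $\alpha$, use the Bochner formula to conclude that $\Omega$ is parallel, and read off the $Sp(n)$-holonomy and the quaternionic triple. This is correct and is exactly what the paper's one-line attribution is pointing to, so there is nothing to compare.

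One small comment: you correctly flag the normalization issue, but your phrasing wobbles between ``rescale $\Omega$'' and ``feed the right volume form into Yau.'' The proposition as stated fixes $\Omega$ and demands $\Omega = \omega_J + i\omega_K$ on the nose, which forces the cohomological constraint $\int_S \alpha^{2n} = c_n \int_S \Omega^n \wedge \overline{\Omega}^n$ for the appropriate dimensional constant. Strictly speaking the proposition should either include this compatibility hypothesis or allow $\Omega$ to be replaced by a constant multiple; the paper is silent on this, and your instinct to apply Yau with the prescribed volume form $\Omega^n\wedge\overline{\Omega}^n$ (suitably normalized) is the clean way to handle it. In the surface case $n=1$ relevant to the paper, as you note, $Sp(1)=SU(2)$ and the argument collapses to a single application of Yau plus this volume-matching. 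Your closing remark that the proposition is included only to motivate the ALE case, where Kronheimer's theorems replace Yau, is also accurate and matches the paper's use of it.
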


For the case when $(S,\Omega)$ is a holomorphic symplectic ALE space, by theorems \ref{krony}, \ref{kronaaaldo}, $(S,\Omega,\alpha)$ can be seen as a Kronheimer fiber $S_{(\alpha,[\Omega])}$ and hence there is a unique compatible HK-structure.

The deformation space of HK-structures will be modeled on a family of smoothings of $\C^2/\Gamma$. The key point is that the base $\mathfrak{h}_c$ must be replaced by a larger space that also includes the twistor sphere of $\widetilde{\C^2}/\Gamma$

In order to do so, we need a result from \cite{kronheimer2}:

\begin{proposition} (Kronheimer): Let $X = \widetilde{\C^2/\Gamma}$ be a hyperk\"ahler ALE space and $Z \to \C\p^1$ its twistor family. There is a $\C^*$-deformation of $\C^2/\Gamma$:

\[ \Phi : \mathscr{Y} \to \C^2 \]
fitting into a commutative diagram:

	\[ \begin{tikzcd}
	Z \arrow[swap]{r}{r}   \arrow[swap]{d}  & \bigslant{\mathscr{Y}\setminus \Phi^{-1}(0)}{\C^*} \arrow[swap]{d}{\overline{\Phi}} \arrow[leftarrow]{r}{} &  \mathscr{Y} \arrow[swap]{d}{\Phi} \\   																				
	\C\p^1 \arrow[rightarrow]{r}{=} 										  &             \C\p^1 \arrow[leftarrow]{r}   		   & \C^2 		 
	\end{tikzcd}
	\]
where the right square is obtained by dividing by $\C^*$ and the map $r$ is a simultaneous resolution of $\overline{\Phi}$ inducing the minimal resolution on every fiber.
\end{proposition}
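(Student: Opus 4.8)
The plan is to obtain $\mathscr{Y}$ by pulling back the Brieskorn--Grothendieck family $\Psi : \mathcal{S} \to \mathfrak{h}_c/W$ along a homogeneous degree-two map $\C^2 \to \mathfrak{h}_c$ that encodes the period data of the twistor family of $X$, and then to identify the simultaneously resolved family with $Z$ by means of the Torelli theorem for ALE gravitational instantons.

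First I would fix a generic $\xi = (\chi_1,\chi_2,\chi_3) \in \R^3\otimes\mathfrak{h}$ with $S_\xi \cong X$ as hyperk\"ahler manifolds (Theorem \ref{kronaaaldo}). For a point $n$ of the twistor sphere $\sph^2 \cong \C\p^1$, the complex structure $I_n = n_1 I + n_2 J + n_3 K$ makes $X$ a holomorphic symplectic ALE surface whose holomorphic symplectic class is the corresponding linear combination of $\chi_1,\chi_2,\chi_3$. In homogeneous coordinates on $\C\p^1$ this class is the value of an $\mathcal{O}(2)$-valued section --- the defining feature of hyperk\"ahler twistor spaces \cite{hitchinkarlhedelindstromrocek} --- so it lifts to a polynomial map $P : \C^2 \to \mathfrak{h}_c$, homogeneous of degree two, which is $\C^*$-equivariant for the weight-one action on $\C^2$ and the weight-two dilations on $\mathfrak{h}_c$ for which the Kronheimer families of Theorem \ref{krony} are equivariant.

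Next, set $\overline{P} = p_W\circ P$ and define $\Phi : \mathscr{Y} := \overline{P}^*\mathcal{S} \to \C^2$ (equivalently $\mathscr{Y} = P^*\mathcal{K}_0$, since $\Theta_0 = p_W^*\Psi$). Flatness and miniversality of $\Psi$ give that $\Phi$ is flat with $\Phi^{-1}(0) = \Psi^{-1}(0) \cong \C^2/\Gamma$, and the weight matching of the previous step lifts the $\C^*$-action on $\mathcal{S}$ to $\mathscr{Y}$ covering its action on $\C^2$, so $\Phi$ is a $\C^*$-deformation of $\C^2/\Gamma$; dividing $\mathscr{Y}\setminus\Phi^{-1}(0)$ by $\C^*$ produces $\overline{\Phi}$ over $\C\p^1$. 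Because $P$ in fact lands in $\mathfrak{h}_c$, not merely in $\mathfrak{h}_c/W$, I can pull back the Kronheimer simultaneous resolution $\Theta_\chi : \mathcal{K}_\chi \to \mathfrak{h}_c$ along $P$, remove the central fibre, and divide by $\C^*$; this yields a holomorphic family $\widetilde{Z} \to \C\p^1$ together with a map $r : \widetilde{Z} \to (\mathscr{Y}\setminus\Phi^{-1}(0))/\C^*$ that restricts to the minimal resolution on each fibre by Theorem \ref{krony}. At this point the entire diagram is in place except for the identification $\widetilde{Z} \cong Z$.

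The heart of the argument --- and the step I expect to be the main obstacle --- is precisely this last identification: one must upgrade the fibrewise agreement of $\widetilde{Z}_{[n]}$ with the twistor fibre $(X, I_n)$ to an isomorphism of complex manifolds intertwining the holomorphic projection to $\C\p^1$, the antiholomorphic involution covering the antipodal map, and the $\mathcal{O}(2)$-twisted relative holomorphic symplectic form. By construction the relative symplectic form on $\widetilde{Z}$, pulled back from the canonical generator $\omega_c$ on $\mathcal{K}_\chi$, has period class $P(n)$, which is exactly the period of the $I_n$-holomorphic symplectic form on $X$; Kronheimer's Torelli theorem for ALE hyperk\"ahler $4$-manifolds \cite{kronheimer2} then supplies the fibrewise isomorphisms together with all their structure, and Looijenga's description of the period map for $\Psi$ \cite{looijenga} --- which makes the family-level period map a local isomorphism onto its image --- lets these isomorphisms glue over $\C\p^1$ and shows the twistor complex structure is recovered algebraically from the period data. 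Everything else reduces to bookkeeping of gradings and of pullbacks of the families furnished by Theorems \ref{krony} and \ref{kronaaaldo}.
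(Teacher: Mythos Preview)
The paper does not supply its own proof of this proposition: it is stated with the attribution ``(Kronheimer)'' and implicitly referenced to \cite{kronheimer2}, and the surrounding text immediately proceeds to \emph{use} the result (``Since $\Phi$ is a $\C^*$-deformation of $\C^2/\Gamma$, it is induced from $\Psi$ via pullback by a map $\zeta : \C^2 \to \mathfrak{h}_c/W$\ldots''). So there is no in-paper argument to compare your proposal against.

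That said, your reconstruction is broadly in the spirit of Kronheimer's original argument, and the ingredients you invoke --- the $\mathcal{O}(2)$-twisted relative holomorphic symplectic form on the twistor space \cite{hitchinkarlhedelindstromrocek}, the $\C^*$-equivariance of the Kronheimer and Slodowy families, Looijenga's period map, and the ALE Torelli theorem --- are the right ones. One point to be careful with: you write ``equivalently $\mathscr{Y} = P^*\mathcal{K}_0$, since $\Theta_0 = p_W^*\Psi$'', but pulling back $\mathcal{K}_0$ along $P$ already lifts to $\mathfrak{h}_c$ and hence implicitly chooses a sheet of the $W$-cover, whereas the statement only asserts the existence of $\mathscr{Y}$ over $\C^2$ as a deformation of $\C^2/\Gamma$; the paper, by contrast, treats $\mathscr{Y}$ as a black box and only afterwards forms the fibre product $\C^2\times_{\mathfrak{h}_c/W}\mathfrak{h}_c$ to recover the lift. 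Your approach is cleaner if one is willing to build $\mathscr{Y}$ directly from $P$, but you should then check that the $\C^*$-action you obtain on $\mathscr{Y}$ really is the one making $\Phi$ a $\C^*$-deformation in the sense used downstream (weight one on $\C^2$, weight two on $\mathfrak{h}_c/W$), rather than an artefact of the particular lift.
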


Since $\Phi$ is a $\C^*$-deformation of $\C^2/\Gamma$, it is induced from $\Psi$ via pullback by a map:

\[ \zeta : \C^2 \to \mathfrak{h}_c/W \]

Together with $p_W$, we can form the fiber product:

	\[ \begin{tikzcd}
	\C^2 \times_{\mathfrak{h}_c/W} \mathfrak{h}_c \arrow[swap]{r}   \arrow[swap]{d}  & \C^2 \arrow[swap]{d}{\zeta}  \\   																				
	\mathfrak{h}_c \arrow[rightarrow]{r}{p_W} 										  &             \mathfrak{h}_c/W 		 
	\end{tikzcd}
	\]

Let:

\[ \mathcal{R} := \mathscr{Y}\times_{\mathcal{S}} \mathcal{K}_0 \]

Since we also have that $\mathscr{Y}=\zeta^*\mathcal{S}$ and $\mathcal{K}_0 = p_W^*\mathcal{S}$, general considerations imply there is a unique map:

\[ \widetilde{\Upsilon} : \mathcal{R} \to \C^2 \times_{\mathfrak{h}_c/W} \mathfrak{h}_c \]
making all the obvious diagrams commutative.

Recall that the Kronheimer family $\Theta_0$ had an implicit complex structure (which we denoted by $I$) built into it, so that the fibers $\Theta_0^{-1}(\chi_2+i\chi_3) = S_{(0,\chi_2,\chi_3)}$ are holomorphic symplectic with respect to $I$. In contrast, the fiber of $\widetilde{\Upsilon}$ over $(a,\chi_2+i\chi_3) \in \C^2 \times_{\mathfrak{h}_c/W} \mathfrak{h}_c$ consists of $S_{(0,\chi_2,\chi_3)}$ obtained from the Kronheimer construction with complex structure $[a] \in \C\p^1$.

In fact, for our purposes it is more convenient to work with the composition of $\widetilde{\Theta}$ with the projectivization map $\C^2 \to \C\p^1$, which we denote by

\beq \Upsilon : \mathcal{R} \to \C\p^1 \times_{\mathfrak{h}_c/W} \mathfrak{h}_c \eeq

Now, any choice of $\chi \in \mathfrak{h} \setminus \Delta$ allows us to build:

\[ \widetilde{\Upsilon}_\chi : \mathcal{R}_\chi := \mathscr{Y}\times_{\mathcal{S}} \mathcal{K}_\chi \to \C^2 \times_{\mathfrak{h}_c/W} \mathfrak{h}_c \]
and

\beq \Upsilon_\chi : \mathcal{R}_\chi \to \C\p^1 \times_{\mathfrak{h}_c/W} \mathfrak{h}_c \label{upsila}\eeq
whose fiber over $([a], \chi_2+i\chi_3)$ is the holomorphic symplectic space $S_{(\chi,\chi_2,\chi_3)}$ in complex structure $[a]$ which, whenever $(\chi,\chi_2,\chi_3)$ is generic, is endowed with a natural K\"ahler form $\omega_\chi$ such that $[\omega_\chi] = \chi$. The relative two-form $\omega_c$ in $\mathcal{K}_\chi$ defines a global holomorphic section on $\mathcal{R}_\chi$:

\beq \omega_c \in \Omega^2_{\Upsilon_\chi}(\mathcal{R}_\chi)\otimes \mathcal{O}(2) \eeq
such that $(\omega_\chi,\omega_c)$ is a hyperk\"ahler triple over the generic locus on the base.

We summarize this discussion in the following:

\begin{proposition} The map $\widetilde{\Upsilon}_\chi : {\mathcal{R}}_\chi  \to \C^2 \times_{\mathfrak{h}_c/W} \mathfrak{h}_c$ defines a family of hyperk\"ahler ALE spaces over the generic locus on the base. The map fits into a commutative diagram:
	
	\[ \begin{tikzcd} \label{rody}
	{\mathcal{R}}_\chi \arrow[swap]{r}   \arrow[swap]{d}{{\widetilde{\Upsilon}_\chi}} 				 &   \mathcal{S} \arrow[swap]{d}{\Psi} \\   																				
	\C^2 \times_{\mathfrak{h}_c/W} \mathfrak{h}_c \arrow[rightarrow]{r} 			   	   &            \mathfrak{h}_c /W 		 
	\end{tikzcd}
	\]

Each fiber $(\widetilde{\Upsilon}_\chi)^{-1}(a,\chi_2+i\chi_3)$ is a deformation of the complex space $(\widetilde{\Psi^{-1}(0)},[a])$, and the generic fiber has a hyperk\"ahler triple induced from a relative triple $\ul{\omega}_{\text{unf}} = (\omega_\chi,\omega_c)$ varying smoothly over the generic locus in $\C^2 \times_{\mathfrak{h}_c/W} \mathfrak{h}_c$. 

\label{rodzao}
\end{proposition}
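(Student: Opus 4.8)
The plan is to assemble the diagram from the pieces constructed just above, and then verify each claimed property (flatness/surjectivity over the generic locus, identification of fibers, existence of the relative hyperk\"ahler triple) by tracing through the versality of Slodowy's family and the $\C^*$-equivariance built into Kronheimer's construction. Concretely, first I would recall that $\mathscr{Y} = \zeta^*\mathcal{S}$ and $\mathcal{K}_\chi = p_W^*\mathcal{S}$ (the latter via Theorem \ref{krony}), so by the universal property of fiber products the space $\mathcal{R}_\chi := \mathscr{Y}\times_{\mathcal{S}}\mathcal{K}_\chi$ comes equipped with a canonical map $\widetilde{\Upsilon}_\chi : \mathcal{R}_\chi \to \C^2\times_{\mathfrak{h}_c/W}\mathfrak{h}_c$, and the outer square of the diagram commutes by construction since both composites to $\mathfrak{h}_c/W$ factor through $\mathcal{S}\stackrel{\Psi}{\to}\mathfrak{h}_c/W$. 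This is essentially a diagram chase and should be routine.

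Next I would identify the fibers. Over a point $([a],\chi_2+i\chi_3)$, the factor from $\mathscr{Y}$ contributes the resolved space $\widetilde{\C^2/\Gamma}$ in complex structure $[a]$, and the factor from $\mathcal{K}_\chi$ contributes the Kronheimer fiber $S_{(\chi,\chi_2,\chi_3)}$; the fiber product over $\mathcal{S}$ glues these compatibly because both are simultaneous resolutions of the \emph{same} Slodowy fiber $\Psi^{-1}(p_W(\chi_2+i\chi_3))$. So the fiber is $S_{(\chi,\chi_2,\chi_3)}$ equipped with the complex structure $[a]$, which by Kronheimer's construction (Theorems \ref{krony}, \ref{kronaaaldo}) is a hyperk\"ahler ALE space precisely when $(\chi,\chi_2,\chi_3)$ is generic, i.e.\ over the generic locus of the base. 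Flatness and surjectivity of $\widetilde{\Upsilon}_\chi$ follow from flatness and surjectivity of $\Psi$ (hence of its pullbacks $\Phi$ and $\Theta_\chi$) together with the fact that fiber products of flat maps are flat.

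For the relative hyperk\"ahler triple: the family $\mathcal{K}_\chi$ carries the relative holomorphic symplectic form $\omega_c$ with $[\omega_c|_{S_{(\chi,\chi_2,\chi_3)}}] = \chi_2+i\chi_3$, and over the generic locus the resolution carries the K\"ahler form $\omega_\chi$ with $[\omega_\chi] = \chi$; pulling these back along the projection $\mathcal{R}_\chi \to \mathcal{K}_\chi$ and using the $\C\p^1$-direction coming from $\mathscr{Y}$ to twist $\omega_c$ into a section of $\mathcal{O}(2)$, one obtains $\ul{\omega}_{\text{unf}} = (\omega_\chi,\omega_c)$, which restricts to the standard Kronheimer hyperk\"ahler triple on each generic fiber by construction. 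Smoothness of this triple over the generic locus follows from the smooth (even holomorphic in the appropriate directions) dependence of the Kronheimer data on $\xi$, established in \cite{kronheimer1, kronheimer2}.

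The main obstacle I anticipate is not any single estimate but rather the bookkeeping needed to make the fiber-product identification honest: $\mathscr{Y}$, $\mathcal{K}_0$ and $\mathcal{K}_\chi$ are only defined after passing to normalizations, and the resolution map $r$ in the twistor picture versus the resolution $r_\chi$ in the Kronheimer picture must be checked to be compatible over $\mathcal{S}$ so that the two simultaneous resolutions patch to give a \emph{single} well-defined space $\mathcal{R}_\chi$ rather than merely a birational correspondence. Equivalently, one must confirm that the complex structure $[a]\in\C\p^1$ selected by the $\mathscr{Y}$-factor is compatible with the choice of splitting $\text{Im}\,\mathbb{H}\cong\R\oplus\C$ implicit in the $\mathcal{K}_\chi$-factor — this is exactly the point where Kronheimer's twistor construction \cite{kronheimer2} is used in an essential way, and it is what guarantees the fibers are genuinely hyperk\"ahler ALE spaces (all three symplectic forms present) rather than merely holomorphic symplectic. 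Once that compatibility is nailed down, the commutativity of the diagram and the existence of $\ul{\omega}_{\text{unf}}$ are immediate consequences of the constructions already recalled.
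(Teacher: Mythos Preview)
Your proposal is correct and follows essentially the same route as the paper. In the paper this proposition is explicitly introduced with ``We summarize this discussion in the following:'' and carries no separate proof; its content is precisely the fiber-product construction $\mathcal{R}_\chi = \mathscr{Y}\times_{\mathcal{S}}\mathcal{K}_\chi$, the identification of fibers via Kronheimer's theorems, and the relative form $\omega_c \in \Omega^2_{\Upsilon_\chi}(\mathcal{R}_\chi)\otimes\mathcal{O}(2)$ --- all of which you recover. Your final paragraph flagging the compatibility of the two simultaneous resolutions (twistor side vs.\ Kronheimer side) over $\mathcal{S}$ is a point the paper glosses over entirely, so if anything you are being more scrupulous than the text itself.
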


The notation $\ul{\omega}_{\text{unf}}$ is meant to emphasize that this element induces HK-structures on the \emph{unfoldings} $S_\xi$ of the singularity $\C^2/\Gamma$.

\subsection{Cameral Sections} \label{cameralsections}

Let $(p_0: M_0 \to Q,\nabla_0,\eta_0)$ be a coassociative ADE fibration of type $\Gamma$ with closed $G_2$-structure $\varphi_0$. We write $(\eta_0,\mu_0,\mf{H}_0)$ for its associated Donaldson data; in particular, $\mf{H}_0$ is the horizontal distribution associated to the connection $\nabla_0$ on $\mf{V}$. Let again $\mathfrak{g}$ be the compact Lie algebra associated to $\Gamma$, $\mathfrak{h}$ a Cartan subalgebra, $W$ the Weyl group and $r := \text{rank}(\mathfrak{g}) = \dim(\mathfrak{h})$. 

Our first goal is to use the cocycle $\zeta_\mf{V}$ of $\mf{V}$ to build a family version of the map ${\Upsilon}_\chi$ over $Q$. This consists of:
\begin{enumerate}
\item A complex fiber bundle $t: \mathcal{E}_c \to Q$ whose fibers can be naturally identified with the HK deformation spaces of the fibers of $p_0$. It is given by:

\[ \mathcal{E}_c := \sph\Lambda^{2,+}_\nabla \otimes \mathfrak{H}_c \]
where $\mathfrak{H}_c \to Q$ is a rank $r$ vector bundle with fibers $\mathfrak{h}_c$ endowed with a fiberwise $W$-action, to be defined in the proof of Lemma \ref{bundleball} (see also the next remark). There is also another fiber bundle:

\[ t_W : \mathcal{E}_c/W \to Q \]
and a natural projection $e : \mathcal{E}_c \to \mathcal{E}_c/W$.


\item A family of (possibly singular) hyperk\"ahler $4$-manifolds $u : \mathcal{U} \to \mathcal{E}_c$ with a structure of smooth fiber bundle on a dense open $\mathcal{U}^\circ \subset \mathcal{U}$.
\end{enumerate}



In this picture, the twistor $\C\p^1$ appearing in \ref{upsila} is identified with the unit spheres in the fibers of $\sph\Lambda^{2,+}_\nabla$ via the isomorphism $\p(\slashed{S}^+_\mf{V}) \cong \sph\Lambda^{2,+}(\mf{V})$. Note also that the twisting isomorphism $\eta_0 : TQ \cong \Lambda^{2,+}_\nabla$ and the induced metric on $Q$ allows us to identify:

\[ C^\infty(Q,\mathcal{E}_c) \subset \Omega^1(Q,\mathfrak{H}_c) \]

\beq C^\infty(Q,\mathcal{E}_c/W) \subset \Omega^1(Q,\mathfrak{H}_c/W) \label{tchatchatcha} \eeq
as $1$-forms of norm one in the induced metric. 

\begin{remark}
Let us clarify what we mean by $\Omega^1(Q,\mathfrak{H}_c/W)$. As will be clear from the proof of Lemma \ref{bundleball}, if we consider $\mf{V}$ without a preferred metric and orientation, the conic bundle $\mathfrak{H}_c/W$ can be seen as an associated bundle to a principal $C_{GL(2,\C)_-}(\Gamma)$-bundle via the determinant map

\[ \det : C_{GL(2,\C)_-}(\Gamma) \to \C^* \]
since this map allows us to make $C_{GL(2,\C)_-}(\Gamma)$ act on the cone $\mathfrak{h}_c/W$ by rescaling. Then, using Chevalley's restriction theorem: 

\[ \C[\mathfrak{g}_c]^{G_c} \cong \C[\mathfrak{h}_c]^W \] 
any choice of basis of $W$-invariant homogeneous polynomials $\left\{ p_1,\ldots,p_r \right\}$ on $\mathfrak{h}_c$ identifies $\mathfrak{h}_c/W$ with a $r$-dimensional complex vector space $D$ endowed with a $\C^*$-action given by:

\[ \lambda(v_1,\ldots,v_r) = (\lambda^{\deg(p_1)}v_1,\ldots,\lambda^{\deg(p_r)}v_r) \; \; \; , \; \; \; \forall \lambda \in \C^* \] 

Hence we can identify $\mathfrak{H}_c/W$ with a $\C^*$-vector bundle $\mathcal{D} \to Q$ with fibers $D$. As such, when we write $\Omega^1(Q,\mathfrak{H}_c/W)$ we really mean $\Omega^1(Q,\mathcal{D})$. Nevertheless, we choose to keep our notation as in \ref{tchatchatcha} since it emphasizes the geometric origin of $\mathcal{D}$, which will be important in applications.
\end{remark}

In our application to $G_2$-geometry, there will be a natural $G$-structure $\mathcal{P}_G$ on a principal $G_c$-bundle $\mathcal{P}_{G_c} \to Q$, inducing real versions of the objects above. So we consider a bundle of real Cartan subalgebras $\mathfrak{H} := \mathfrak{H}_c \cap Ad(\mathcal{P}_G)$ and also the underlying real bundles $\mathcal{E} := \mathcal{E}_c^\R$ and $\mathcal{E}_W := \mathcal{E}_c^\R/W$. We will prove that closed $\mathfrak{H}/W$-valued $1$-forms define $7$-manifolds with closed $G_2$-structures and a compatible structure of coassociative fibration by hyperk\"ahler ALE spaces over $Q$. If the form is also co-closed, then the $G_2$-structure will satisfy a certain harmonicity condition (see Theorem \ref{teoremasso}). Such manifolds can be realized geometrically inside the total space of $\mathcal{U}$. 

A section $s$ of $\mathcal{E}_W$ defines an object called a \emph{cameral cover} of $Q$, which is a Galois $|W|$-to-$1$ branched covering map:

\[ c_s : \widetilde{Q}_s \to Q \]

This is defined as follows: there is a projection map $e : \mathcal{E} \to \mathcal{E}_W$ which is a $|W|$-to-$1$ cover with Galois group $W$. The cameral cover is defined as the fiber product:

\beq \widetilde{Q}_\theta := \mathcal{E} \times_{\mathcal{E}_W,e,s} Q \eeq
and the covering map is the natural map to $Q$. Crucially, $\widetilde{Q}_s$ comes with an embedding $\widetilde{Q}_s \embedd \mathcal{E}$ given by the pullback $c_s^*s$. 

An important consequence of this discussion is that, in the absence of ramification: 

\beq \Omega^1(Q,\mathfrak{H}/W) \cong \Omega^1(\widetilde{Q}_s,\mathfrak{H})^W \label{important} \eeq

This isomorphism will be used repeatedly below. Note also that given $\widetilde{Q}_s$ and the two maps, one can recover the section $s$. For this reason, we refer to the section $s$ as a {\emph{cameral section}.  We will explain in \ref{higgsstructure} how a commuting Higgs field $\theta$ defines a cameral section $s_\theta$.

Another useful notion is that of a \emph{spectral cover}. These arise once one chooses a linear representation of $G$. Then a section $s$ of $\mathcal{E}/W$ can be seen as a matrix-valued one-form whose characteristic polynomial $f_s$, seen as a section of the tautological cotangent bundle $p_Q: T^*Q \to \text{tot}(T^*Q)$, defines a $|\Gamma|$-to-$1$ branched covering map:

\[ p_Q|_{f_\theta} : Z(f_s) = \left\{ x \in T^*Q ; f_s(x)=0 \right\} \to Q \]

This is called the spectral cover associated to the chosen representation. It can be seen as a Lagrangian submanifold of $T^*Q$ parametrizing the eigenvalues of $s$.

\subsection{Cameral Structures and Harmonic Higgs Bundles} \label{higgsstructure}

The goal of this section is to show that a commuting Higgs field $\theta$ defines a cameral section $s_\theta$.

Let $Q$ be a closed oriented Riemannian $3$-manifold and $\pi_c : \mathcal{P}_{G_c} \to Q$ a (right) principal $G_c$-bundle, with complex structure $J$ and endowed with a $G$-structure, i.e., a reduction to a principal $G$-bundle $\pi: \mathcal{P}_G \to Q$. Choosing a base point $p_0 \in \mathcal{P}_{G_c}$, this can be defined as:

\[ \mathcal{P}_G = \left\{ p \in \mathcal{P}_{G_c} \; | \; \exists g \in G \text{ s.t. } p=p_0g \right\} \]
and $\pi = \pi_c|_{\mathcal{P}_G}$.

\begin{definition} Let $\mathcal{T} \leq G$ be a maximal torus and $N := N_G(\mathcal{T})$ its normalizer. A (unramified) \emph{cameral structure} on $\pi: \mathcal{P}_G \to Q$ is a $G$-equivariant map:

\beq \sigma : \mathcal{P}_G \to G/N \eeq
where $G$ acts on $G/N$ on the left.
\end{definition}

Equivalently, a cameral structure is a section of the bundle $\mathcal{P}_G \times_G (G/N) \to Q$ associated to $\mathcal{P}_G$ via the left $G$-action on $G/N$.

Recall that the (standard) complete flag variety of $\C^n$ is given by $GL(n,\C)/B$ where $B$ is the Borel subgroup of lower triangular matrices. It is isomorphic to $SU(n)/T_n$, where $T_n \cong U(1)^{n-1}$ is the maximal torus of $SU(n)$. A point in this space is determined by a choice of maximal torus in $GL(n,\C)$ and a Borel subgroup containing it, or equivalently, a choice of Cartan subalgebra in $\mathfrak{gl}(n,\C)$ and a Borel subalgebra containing it.

Similarly, $G/\mathcal{T}$ parametrizes pairs of the form $(\mathfrak{h}_c,\mathfrak{b}_c)$ with $\mathfrak{b}_c \subset \mathfrak{g}_c$ a Borel subalgebra containing $\mathfrak{h}_c$. To remove the choice of $\mathfrak{b}_c$ one replaces $\mathcal{T}$ by its normalizer. Thus, the partial flag variety $G/N \subset Gr(r,\mathfrak{g})$ is the parameter space of Cartan subalgebras $\mathfrak{h}_c \subset \mathfrak{g}_c$ (or equivalently, real Cartan subalgebras $\mathfrak{h} \subset \mathfrak{g}$). Of course, it is also the parameter space of maximal tori in $G$.

As a consequence, a cameral structure on $\mathcal{P}_G$ can be equivalently prescribed as a $W$-bundle of maximal tori:

\beq \mathbb{T} \to Q \eeq
or as a $W$-bundle of Cartan subalgebras:

\beq \mathfrak{H} \to Q \eeq

This last bundle can be realized inside of $Ad(\mathcal{P}_G)$ as follows: consider the smooth map $\mathfrak{g}^{reg} \to Gr(r,\mathfrak{g})$ sending a regular (hence semi-simple) element $X$ to the Cartan subalgebra $\mathfrak{h}_X = \mathfrak{c}_\mathfrak{g}(X)$. It has a bundle version: 

\[ \kappa : Ad(\mathcal{P}_G)^{reg} \to Gr(r,Ad(\mathcal{P}_G)) \]

Then $\mathfrak{H} = \overline{\kappa^{-1}(\sigma(Q))}$. Note, however, that in general $Ad(\mathcal{P}_G)^{reg}$ has no global section, hence \emph{there is no global identification of the fibers of $\mathfrak{H}$ to a fixed Cartan $\mathfrak{h}$}.

Similarly, one can realize $\mathbb{T}$ as a sub-bundle of the \emph{conjugate bundle}: 

\[ \mathscr{C} = \mathcal{P}_G\times_G G \] 

One way to see this is by noting that every section of $\mathbb{T}$ defines an automorphism of $\mathcal{P}_G$.

Note also that a cameral structure defines a principal $N$-bundle $\mathcal{P}_N \subset \mathcal{P}_G$, so there is also a conjugate bundle $\mathscr{N} = \mathcal{P}_N\times_N N$.






We turn now to Higgs bundles. Recall that $G$ is a maximal compact Lie subgroup of $G_c$, so complex conjugation on $\mathfrak{g}_c$ is a Cartan involution inducing a Cartan decomposition $\mathfrak{g}_c = \mathfrak{g}\oplus i\mathfrak{g}$. As a consequence, there is a hermitian metric $k$ on $\mathcal{P}_{G_c}$ such that:

\beq Ad(\mathcal{P}_{G_c}) = Ad(\mathcal{P}_G) \oplus_k Ad(\mathcal{P}_G)^\perp \eeq


\begin{definition}
Let $\mathcal{P}_{G_c} \to Q$ be a principal $G_c$-bundle and $G \leq G_c$ a maximal compact subgroup. A \emph{principal $G$-Higgs bundle} is a pair $(\mathcal{P}_G,\theta)$ consisting of a $G$-structure $\mathcal{P}_G$ on $\mathcal{P}_{G_c}$ and $\theta \in \Omega^1(Q,Ad(\mathcal{P}_G)^\perp)$.
\end{definition}

Note that the complex structure $J$ on $Ad(\mathcal{P}_{G_c})$ maps $Ad(\mathcal{P}_G)^\perp$ to $Ad(\mathcal{P}_G)$, so by applying $J$ we can also think of $\theta$ as a one-form taking values in $Ad(\mathcal{P}_G)$. Thus, from now on we consider $\theta \in \Omega^1(Q,Ad(\mathcal{P}_G))$.

Assume that the condition $[\theta\wedge\theta]=0$ holds, and fix an open covering $\mathfrak{U}$ of $Q$ trivializing both $T^*Q$ and $Ad(\mathcal{P}_G)$. Let 

\[\theta^U = (\theta_1^U,\theta_2^U,\theta_3^U) \in \mathcal{C}^\infty(U,\mathfrak{g})^3 \] 
be the components of $\theta$ over $U \in \mathfrak{U}$. Since $\mathfrak{g}$ is compact and simple, for each $q \in U$ and $i =1,2,3$, $\theta_i^U(q)$ is diagonalizable. Since $[\theta_i^U,\theta_j^U]=0$, the three components of $\theta^U(q)$ can be simultaneously conjugated to any given Cartan subalgebra $\mathfrak{h} \subset Ad(\mathcal{P}_G)_q$.

If we are given a cameral structure on $\mathcal{P}_G$ (in the form of a sub-bundle of Cartan subalgebras $\mathfrak{H} \subset Ad(\mathcal{P}_G)$), we can simultaneously conjugate the components of $\theta^U$ to $\mathfrak{H}$ using a local section $g^U$of $\mathscr{C}$. However, there is an ambiguity in the choice of $g^U$ coming from automorphisms of $\mathfrak{H}$: a local section $n^U$ of $\mathscr{N}$ acts on $\mathscr{C}$ by conjugation, and on $\mathfrak{H}$ via the Weyl group. Hence $n^Ug^U$ also conjugates $\theta^U$ to $\mathfrak{H}$, and the answer is different if and only if $n^U$ defines a non-trivial local section of the flat bundle $\mathscr{N}/\mathbb{T} \to Q$, i.e., a non-trivial element $w \in W$.

Performing this construction over all open sets in $\mathfrak{U}$, the upshot is then that the condition $[\theta,\theta]=0$ gives us a cameral section $s_\theta \in \mathcal{C}^\infty(Q,\mathcal{E}_W)$ and a cameral cover $c_{\theta} : \widetilde{Q}_{\theta} \to Q$. We sum up this discussion in the following:

\begin{lemma}
Let $\mathcal{P}_G \subset \mathcal{P}_{G_c}$ be endowed with a cameral structure defined by a bundle of Cartan subalgebras $\mathfrak{H} \to Q$, and let $\mathcal{E} = T^*Q\otimes \mathfrak{H}$. 

Then, any Higgs field $\theta \in \Omega^1(Q,Ad(\mathcal{P}_{G_c})^\perp)$, defines a cameral section:

\[ s_\theta \in \mathcal{C}^\infty(Q,\mathcal{E}/W) \]
and hence also a  $W$-cameral cover:

\[ c_{\theta} : \widetilde{Q}_{\theta} \to Q \] \label{lemata}
\end{lemma}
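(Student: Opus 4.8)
The statement formalizes the construction sketched just before it, so the plan is to make precise three points: (i) that the pointwise simultaneous diagonalization of the components of $\theta$ into the prescribed Cartan bundle $\mathfrak H$ is well defined up to the fibrewise $W$-action and so produces a section $s_\theta$ of $\mathcal E/W$; (ii) that $s_\theta$ is smooth; (iii) that $\widetilde Q_\theta:=\mathcal E\times_{\mathcal E/W,\,e,\,s_\theta}Q$ is a Galois $W$-cover of $Q$. Only (ii) requires real work.

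\textbf{Construction of $s_\theta$.} Fix an open cover $\mathfrak U$ of $Q$ trivializing $T^*Q$ and $Ad(\mathcal P_G)$, and shrink each $U$ so that $\mathfrak H|_U$ becomes a fixed Cartan $\mathfrak h\subset\mathfrak g$ — possible because $\mathfrak H$ is pulled back along the cameral structure $\sigma:\mathcal P_G\to G/N$ from the tautological Cartan and $G\to G/N$ has local sections. Over $U$, write $\theta^U=(\theta_1^U,\theta_2^U,\theta_3^U)$. Since $\mathfrak g$ is compact each $\theta_i^U(q)$ is semisimple; since $[\theta\wedge\theta]=0$ the three commute; hence they span an abelian subalgebra of semisimple elements and lie in a common Cartan of $\mathfrak g$, which can be carried onto $\mathfrak h$ by some $g_q\in G$, with any two choices of $g_q$ differing by an element of $N=N_G(\mathcal T)$. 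Thus $(Ad(g_q)\theta_1^U(q),Ad(g_q)\theta_2^U(q),Ad(g_q)\theta_3^U(q))\in\mathfrak h^{\oplus 3}$ is well defined modulo the diagonal $W$-action; invariantly, $(\mathrm{id}_{T^*Q}\otimes Ad(g_q))\theta(q)$ is a well-defined point $s_\theta(q)\in(\mathcal E/W)_q$, independent of the trivializations and of $g_q$. Since the residual ambiguity is exactly the $W$-action by which one quotients, these local descriptions agree on overlaps and glue to a section $s_\theta:Q\to\mathcal E/W$.

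\textbf{Smoothness — the main obstacle.} The element $g_q$ cannot be chosen continuously near a point where $\theta$ is non-regular (colliding joint eigenvalues), so smoothness must be checked after the $W$-quotient. The plan is to identify $s_\theta$ locally with $Q\xrightarrow{\theta}\mathfrak g^{\oplus 3}\xrightarrow{\Pi}\mathfrak g^{\oplus 3}//G$, where $\Pi$ is the adjoint GIT quotient map, which is polynomial (in a faithful representation its coordinates are traces $\operatorname{tr}\big((\theta_1^U)^a(\theta_2^U)^b(\theta_3^U)^c\big)$, which by commutativity are the multisymmetric power sums of the joint eigenvalues) and hence smooth. Since $\theta(q)$ is a commuting tuple of semisimple elements it lies in $G\cdot\mathfrak h^{\oplus 3}$, whose closure $\mathcal C^0\subset\mathfrak g^{\oplus 3}$ is closed and $G$-invariant with $\Pi(\mathcal C^0)=\mathfrak h^{\oplus 3}/W$ closed in $\mathfrak g^{\oplus 3}//G$ — the closed orbit in each fibre of $\Pi$ over $\mathcal C^0$ is simultaneously semisimple and so meets $\mathfrak h^{\oplus 3}$. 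Therefore $\Pi\circ\theta$ is a smooth map into $\mathfrak g^{\oplus 3}//G$ with image in the closed subvariety $\mathfrak h^{\oplus 3}/W$, hence smooth as a map to $\mathfrak h^{\oplus 3}/W$ with its Chevalley structure; and the associated-bundle version of $\Pi$ identifies its image with $\mathcal E/W$ (the $G$-monodromy on $Ad(\mathcal P_G)$ is killed, while the $SO(3)$-monodromy on $T^*Q$ and the $W$-monodromy on $\mathfrak H$ match on both sides). So $s_\theta$ is a smooth section of $\mathcal E/W$, i.e. a cameral section.

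\textbf{The cameral cover.} This is formal: with $e:\mathcal E\to\mathcal E/W$ the $|W|$-sheeted branched Galois cover, set $\widetilde Q_\theta:=\mathcal E\times_{\mathcal E/W,\,e,\,s_\theta}Q$ and let $c_\theta:\widetilde Q_\theta\to Q$ be the projection. As a pullback of $e$ it is again a Galois cover with group $W$, unbranched exactly over the regular locus of $\theta$, which is the asserted $W$-cameral cover and completes the argument.
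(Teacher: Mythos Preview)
Your argument follows the same route as the paper: the lemma there is stated as a summary of the discussion immediately preceding it, which is precisely your step (i) (pointwise simultaneous conjugation into $\mathfrak H$, with the ambiguity absorbed by the diagonal $W$-action) together with the formal fibre-product definition of $\widetilde Q_\theta$ in your step (iii).

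Where you differ is in your step (ii). The paper phrases the conjugation as being performed by a \emph{local section} $g^U$ of the conjugate bundle $\mathscr C$, implicitly treating smoothness as obvious; it does not address what happens near points where $\theta$ is non-regular, where no such smooth $g^U$ exists. You identify this gap correctly and close it by factoring through the polynomial GIT quotient $\Pi:\mathfrak g^{\oplus 3}\to\mathfrak g^{\oplus 3}//G$ and observing that on the commuting semisimple locus the image lies in the closed affine subvariety $\mathfrak h^{\oplus 3}/W$. This is a genuine improvement: it makes the smoothness of $s_\theta$ independent of any choice of conjugator and valid across the branch locus, whereas the paper's phrasing is only literally correct on the regular open set. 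The trade-off is that your argument invokes a nontrivial fact---that $\mathfrak h^{\oplus 3}/W$ sits as a closed subvariety of $\mathfrak g^{\oplus 3}//G$ with the map induced by inclusion being injective on the semisimple commuting locus---which is true but not entirely elementary (it does not follow from the one-variable Chevalley isomorphism alone).
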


Finally, we note that a choice of representation $\rho: G \to GL(V)$ will lead to an associated $G$-bundle and a spectral cover $c_{\theta,\rho} : \widetilde{Q}_{\theta,\rho} \to Q$, which is always a finite quotient of $\widetilde{Q}_{\theta}$. 




\subsection{The Main Theorem} \label{mainsection}







The remainder of section \ref{deformation} is devoted to proving our main result:

\begin{theorem} 
\begin{enumerate} Let $(M_0 \to Q,\varphi_0)$ be a coassociative ADE fibration, $\mathcal{P}_{G_c}$ the trivial principal $G_c$-bundle and $\mathcal{P}_G \subset \mathcal{P}_{G_c}$ a $G$-structure.

\item There is a sub-bundle $t : \mathcal{E} \to Q$ of $T^*Q\otimes Ad(\mathcal{P}_{G_c})$ endowed with a $W$-action, and a family $u: \mathcal{U} \to \mathcal{E}/W$ of hyperk\"ahler ALE spaces with the following properties: 

	\begin{itemize}
		\item $\mathcal{U}|_{\ul{\mf{0}}(Q)} \cong M_0$ 
		\item $\mathcal{U}|_{t^{-1}(q)} \cong {\mathcal{R}}$
		\item $u$ is a smooth fiber bundle map on a dense open $\mathcal{U}^\circ \subset \mathcal{U}$
		
	\end{itemize}
	where $\ul{\mf{0}} : Q \to \mathcal{E}$ denotes the zero-section.
	
\item Given $\theta \in \Omega^1(Q,Ad(\mathcal{P}_G)^\perp)$ satisfying:
 \[ [\theta\wedge\theta]=0 \]
 let $s_\theta \in C^\infty(Q,\mathcal{E}/W)$ be the corresponding cameral section, and 
 \[ M_\theta := u^{-1}(s_\theta(Q)) \stackrel{t \circ u}{\to} Q \]
 
Assume there is a flat $G$-connection $\nabla$ on $\mathcal{P}_{G_c}$ such that:

\[ d_\nabla\theta=0 \]

Then, restricting to the smooth locus $M_\theta^\circ = M_\theta\cap \mathcal{U}^\circ$, one can construct a connection $\mf{H}_\theta$ on $M_\theta^\circ \to Q^\circ$ and:
	
	\beq	\twoparteq{\eta_\theta \in \Omega^2_{\mf{H_\theta}}(M_\theta^\circ/Q^\circ) \otimes \Omega^1(Q^\circ)}{\mu_\theta \in \Omega^3(Q^\circ)} \eeq
such that $(\eta_\theta,\mu_\theta, \mf{H}_\theta)$ satisfy Donaldson's criteria, and hence define a closed $G_2$-structure $\varphi_\theta := \eta_\theta+\mu_\theta$ on $M_\theta^\circ$ such that $M_\theta^\circ \to Q^\circ$ is a coassociative ALE fibration. In particular, $\varphi_0 = \eta_{\ul{\mf{0}}} + \mu_{\ul{\mf{0}}}$.



\item If there is a metric $k$ on $\mathcal{P}_{G_c}$ such that:

\[ d_\nabla^{\dagger_k}\theta=0 \]
then $[\eta_\theta] = d h_\theta$ for a harmonic affine section $h_\theta : Q \to H^2(M_\theta/Q,\R)$. In other words, $\varphi_\theta$ satisfies the adiabatic limit of the torsion-free condition in the sense of \cite{donaldson}.
	
	\end{enumerate}
	\label{teoremito}
\end{theorem}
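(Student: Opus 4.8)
The plan is to establish the three assertions (1), (2), (3) of the theorem in order: reduce (1) to a globalization over $Q$ of the Kronheimer--Slodowy twistor family of Proposition~\ref{rodzao}, reduce (2) to producing $(\eta_\theta,\mu_\theta,\mf H_\theta)$ and invoking Donaldson's Propositions~\ref{donaldao} and~\ref{donaldao2}, and reduce (3) to identifying the D-term with a harmonicity condition together with Proposition~\ref{donaldao4}. Throughout one uses that $\mathcal P_{G_c}$ is trivial, so $\mf H=Q\times\mf h$ and $\mathcal E=T^*Q\otimes\mf h$ carry the standard fibrewise $W$-action, and that $\eta_0\colon TQ\simto\Lambda^{2,+}_\nabla$ both induces the metric on $Q$ and identifies $C^\infty(Q,\mathcal E_c/W)$ with norm-one $\mf h_c/W$-valued $1$-forms. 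For (1) I would set $\mf H_c=Q\times\mf h_c$, $\mathcal E_c=\sph\Lambda^{2,+}_\nabla\otimes\mf H_c$, $\mathcal E=\mathcal E_c^{\R}$, $\mathcal E_W=\mathcal E/W$. Over each local trivialization of $\mf V$ in which $\Gamma$ acts linearly the family $\Upsilon_\chi\colon\mathcal R_\chi\to\C\p^1\times_{\mf h_c/W}\mf h_c$ of Proposition~\ref{rodzao} is available; identifying its twistor $\C\p^1$ with the unit sphere of $\Lambda^{2,+}_\nabla$ through $\p(\slashed S^+_{\mf V})\cong\sph\Lambda^{2,+}(\mf V)$ and gluing these along the cocycle $\zeta_{\mf V}$ of $\mf V$ — valued in $C_{GL(2,\C)_-}(\Gamma)$, which acts on $\mathcal R$ compatibly with the $\C^*$- and $W$-actions — produces $u\colon\mathcal U\to\mathcal E/W$. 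Then $\mathcal U|_{\ul{\mf 0}(Q)}\cong M_0$ follows from $\Psi^{-1}(0)\cong\C^2/\Gamma$ and the definition of a coassociative ADE fibration, $\mathcal U|_{t^{-1}(q)}\cong\mathcal R$ is the fibrewise content of Proposition~\ref{rodzao}, and smoothness of $u$ off the discriminant locus $\R^3\otimes\Delta$ is part of Kronheimer's construction (Theorem~\ref{krony}(1)).

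For (2): given $\theta$ with $[\theta\wedge\theta]=0$, Lemma~\ref{lemata} gives the cameral section $s_\theta\in C^\infty(Q,\mathcal E_W)$; set $M_\theta=u^{-1}(s_\theta(Q))$ and $M_\theta^\circ=M_\theta\cap\mathcal U^\circ$ over the regular locus $Q^\circ=\{q:\theta(q)\text{ regular}\}$. I would restrict the relative hyperk\"ahler triple $\ul\omega_{\mr{unf}}=(\omega_\chi,\omega_c)$ of Proposition~\ref{rodzao} to $M_\theta^\circ$; writing $\eta_\theta=\sum_i\omega_i(\ul x)\,dx_i$ in a local frame adapted to $(Q^\circ,\eta_0)$, the classes $[\omega_i]$ are, under the McKay identification $\mf h\cong H^2$ of the ALE fibre, the components of the simultaneous diagonalization of $(\theta_1,\theta_2,\theta_3)$ — this is where $[\theta\wedge\theta]=0$ enters. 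Hence $[\eta_\theta]=\xi_\theta$, the $\mathcal H$-valued $1$-form attached to $\theta$, and $d_\nabla\theta=0$ says exactly that $\xi_\theta$ is closed; on $Q^\circ$, regularity of $\theta$ makes $\xi_\theta$ an immersion with positive image (the three K\"ahler classes span a self-dual subspace), so one obtains a positive section $h_\theta\colon Q^\circ\to\mathcal H$ with $dh_\theta=\xi_\theta$. Proposition~\ref{donaldao2} then yields $\mf H_\theta$ with $d_{\mf H_\theta}\eta_\theta=0$ ($d_f\eta_\theta=0$ being automatic), and, since $H^1(\widetilde{\C^2/\Gamma})=0$, $\eta_\theta\sim O(1)$, and $\mf H_\theta$ is asymptotically flat near the zero section (using flatness of $\nabla$, which governs the degeneration of the ALE fibres as $\xi_\theta\to0$), a positive $\mu_\theta\in\Omega^3(Q^\circ)$ with $d_f\mu_\theta=-F_{\mf H_\theta}(\eta_\theta)$. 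By Proposition~\ref{donaldao}, $\varphi_\theta=\eta_\theta+\mu_\theta$ is a closed $G_2$-structure making $M_\theta^\circ\to Q^\circ$ a coassociative ALE fibration (Definition~\ref{coassade}), and $\theta=0$ recovers $(M_0,\varphi_0)$ by (1).

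For (3): assume also $d_\nabla^{\dagger_k}\theta=0$. The metric $k$ restricts on $\mf H$ to the $G$-invariant metric — a constant multiple of the Killing form of $\mf h$ — which under McKay $\mf h\cong H^2(\widetilde{\C^2/\Gamma},\R)$ is a multiple of the cup-product pairing; since $\eta_0$ makes the metric on $Q$ agree with the one defining $\dagger_k$, the D-term is equivalent to $d^*\xi_\theta=0$. As $\xi_\theta$ is already closed, it is a harmonic $\mathcal H$-valued $1$-form; $h_\theta$ being a $0$-form section, $\Delta h_\theta=d^*dh_\theta=d^*\xi_\theta=0$, so $h_\theta$ is harmonic and $[\eta_\theta]=dh_\theta$ as claimed. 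Finally, with $\eta_\theta$ hyperk\"ahler, $\mu_\theta=\lambda\,p^*v_Q$, and $h_\theta$ harmonic, Proposition~\ref{donaldao4} supplies the first five equations of~\ref{torcaonula}, while harmonicity of $h_\theta$ (the $\mathcal H$-valued analogue of the stationarity condition there) is exactly the remaining equation in the adiabatic limit; so $\varphi_\theta$ is adiabatically torsion-free in the sense of~\cite{donaldson}, and the Hodge-theoretic remarks after Propositions~\ref{donaldao2} and~\ref{donaldao4} show that the non-adiabatic equation $d_{\mf H_\theta}(*\eta_\theta)=0$ genuinely fails for ALE fibres, so ``adiabatic'' cannot be dropped.

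The hardest part, I expect, is the interface with Donaldson's analysis in step (2) in the non-compact ALE regime: first, obtaining a positive section $h_\theta$ globally on $Q^\circ$ — equivalently, showing the class $[\xi_\theta]\in H^1(Q^\circ;\mathcal H)$ vanishes on the parameter locus $\mathcal B$, or else constructing $\mf H_\theta$ by a local-to-global patching that bypasses a global potential; and second, verifying that the $\mf H_\theta$ produced by Proposition~\ref{donaldao2} is asymptotically flat on a fixed neighbourhood of the zero section, so that $d_f\mu_\theta\sim O(1)$ and $\mu_\theta$ can be made positive. This is precisely where flatness of $\nabla$ must be exploited and where the ALE (as opposed to compact-fibre) nature of the fibres is felt; by comparison, the gluing in (1) and the metric normalization in (3) are routine bookkeeping.
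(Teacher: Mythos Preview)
Your outline matches the paper's proof essentially step for step: (1) is Lemma~\ref{bundleball}, (2) is the unlabeled lemma in \S\ref{closedg2} together with Corollary~\ref{lemamaiscabul}, and (3) is Theorem~\ref{teoremasso} (phrased there as $\mathrm{MC}(h_\theta)=0$, i.e.\ minimality of the image of $h_\theta$, which for a graph is your harmonicity statement).

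On the difficulty you flag in (2): the paper does \emph{not} try to kill the class $[\xi_\theta]\in H^1(Q^\circ;\mathcal H_\theta)$. Instead it takes $h_\theta$ to be a section of the flat \emph{affine} bundle $\widehat{\mathcal H}_\theta$: the local primitives $h_i^V$ of the closed forms $\theta_i^V$ differ only by constants on overlaps, and these constants are absorbed into the affine structure group (an extension of $H^2(S,\R)$ by $W$). An affine section is all Proposition~\ref{donaldao2} requires, since only $dh_\theta$ enters there. Positivity of $h_\theta$ is vacuous for ALE fibres because the intersection form on $H^2(\widetilde{\C^2/\Gamma},\R)$ is definite --- see the remark immediately after Theorem~\ref{teoremasso}. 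As for asymptotic flatness of $\mf H_\theta$, the paper's argument (in Corollary~\ref{lemamaiscabul}) is as brief as yours: it simply observes that the decay hypotheses on $(\nabla_0,\eta_0)$ built into Definition~\ref{coassade} are inherited by $\eta_\theta$, so Proposition~\ref{donaldao2} applies.
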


\begin{corollary} \label{corolarao}
	The coassociative ADE fibration $(M_0,\varphi_0) \to Q$ admits a \emph{local moduli space of smoothings} given by ``closed'' cameral sections:
	
	\beq \moduli_{G_2}(M_0) := \Omega^1_{cl}(Q,\mathfrak{H}/W) \eeq
	parametrizing nearby closed $G_2$-structures on coassociative ALE fibrations $M_\theta \to Q$.
	
	Moreover, the subspace
	
	\[ \Omega^1_{k}(Q,\mathfrak{H}/W) \subseteq \moduli_{G_2}(M_0) \]
	of ``$k$-harmonic'' cameral sections parametrizes those $\varphi_\theta$ which also solve the adiabatic limit of the torsion-free condition.
\end{corollary}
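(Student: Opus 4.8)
The plan is to establish Theorem \ref{teoremito}, from which Corollary \ref{corolarao} is the packaging over all admissible Higgs fields. There are two pillars. The first is Donaldson's dictionary (Propositions \ref{donaldao}--\ref{donaldao2}): a closed $G_2$-structure on a coassociative fibration over $Q$ is the same thing as Donaldson data $(\mf{H},\eta,\mu)$ --- an Ehresmann connection, a fibrewise- and horizontally-closed hyperk\"ahler element $\eta$, and a positive horizontal $\mu$ with $d_f\mu=-F_{\mf{H}}(\eta)$ --- so the task reduces to producing such a triple from $(\theta,\nabla)$. The second is a family version over $Q$ of the twistorial Kronheimer/Slodowy unfolding $\Upsilon_\chi$ of \ref{upsila}: I will build an ambient space $u:\mathcal{U}\to\mathcal{E}/W$ into which every candidate deformation $M_\theta$ embeds as the restriction of $\mathcal{U}$ along the cameral section $s_\theta$, so that the relative triple $\ul{\omega}_{\mathrm{unf}}=(\omega_\chi,\omega_c)$ of Proposition \ref{rodzao} is available by pullback.

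Part (1). Since the structure group of $\mf{V}$ is reduced to $G_{\mf{V}}=SU(2)_+\times C_{SU(2)_-}(\Gamma)$, I complexify and observe that the total symmetry group of the Kronheimer family --- the $SU(2)_+$ acting on the twistor sphere, identified with a fibre of $\sph\Lambda^{2,+}_\nabla$ via $\p(\slashed{S}^+_{\mf{V}})\cong\sph\Lambda^{2,+}(\mf{V})$, and the reductive centralizer $Z\cong\C^*$ (trivial for types $D,E$) acting by the weighted dilations of $\mathfrak{h}_c$ through $\det:C_{GL(2,\C)_-}(\Gamma)\to\C^*$ --- is precisely what lets $\mathcal{R}$ be associated to the cocycle $\zeta_{\mf{V}}$ of $\mf{V}$. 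One sets $\mathfrak{H}_c\to Q$ to be the rank-$r$ bundle with fibre $\mathfrak{h}_c$ carrying the fibrewise $W$-action of Lemma \ref{bundleball}, $\mathcal{E}_c:=\sph\Lambda^{2,+}_\nabla\otimes\mathfrak{H}_c$ (reproducing fibrewise the base of $\Upsilon_\chi$), and $\mathcal{U}$ the associated bundle with fibre $\mathcal{R}$, then passes to real points to get $\mathcal{E},\mathcal{E}/W$. The three bullet points are then verifications: the fibre of $u$ over a point of $Q$ is $\mathcal{R}$ by construction; over the zero section one is unfolding the central fibre $\Psi^{-1}(0)\cong\C^2/\Gamma$, and the associated bundle reassembles to $\mathrm{tot}(\mf{V})/\Gamma=M_0$; and $u$ is a smooth fibre bundle precisely over the complement $\mathcal{U}^\circ$ of the preimage of the discriminant, a closed set of real codimension $\ge 2$.

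Parts (2)--(3). Given $\theta$ with $[\theta\wedge\theta]=0$, Lemma \ref{lemata} produces $s_\theta\in C^\infty(Q,\mathcal{E}/W)$, hence $M_\theta=u^{-1}(s_\theta(Q))$, which over the regular locus $Q^\circ$ is an ALE fibration; pulling $\ul{\omega}_{\mathrm{unf}}$ back along $s_\theta$ and transporting it through the twisting isomorphism $\eta_0:TQ\simto\Lambda^{2,+}_\nabla$ gives a vertical $\Omega^1(Q^\circ)$-valued self-dual $2$-form $\eta_\theta$, and since each fibre is holomorphic-symplectic in its complex structure one has $d_f\eta_\theta=0$ and hyperk\"ahlerity automatically. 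Now the hypotheses that $\nabla$ is flat and $d_\nabla\theta=0$ (so that, with $[\theta\wedge\theta]=0$, one has a commuting solution of the F-terms of \ref{pwequations}) are exactly the input to Proposition \ref{donaldao2}: on the cameral cover $\widetilde{Q}_\theta$ the $\mathfrak{H}$-valued $1$-form $\theta$ is honestly $d$-closed in flat frames, hence integrates to a $W$-equivariant affine section, which under the McKay identification $\mathfrak{h}\cong H^2(\widetilde{\C^2/\Gamma},\R)$ descends to a positive affine section $h_\theta:Q\to H^2(M_\theta/Q,\R)$ with $\partial h_\theta/\partial x_i=[\omega_i]$; Proposition \ref{donaldao2} then furnishes a connection $\mf{H}_\theta$ with $d_{\mf{H}_\theta}\eta_\theta=0$, asymptotically flat near the zero section --- and, because $\nabla$ is flat so the curvature of $\mf{H}_\theta$ stays bounded, on all of $M_\theta^\circ$ --- and, using that $\eta_\theta=O(1)$ and $H^1$ of an ALE space vanishes, a positive horizontal $\mu_\theta$ with $d_f\mu_\theta=-F_{\mf{H}_\theta}(\eta_\theta)$. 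Thus $(\eta_\theta,\mu_\theta,\mf{H}_\theta)$ is Donaldson data and $\varphi_\theta=\eta_\theta+\mu_\theta$ is the desired closed $G_2$-structure with $M_\theta^\circ\to Q^\circ$ coassociative; uniqueness of Donaldson data at $\theta=0$ identifies $\varphi_{\ul{\mf{0}}}=\varphi_0$.

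Part (4) and the Corollary. If in addition $d_\nabla^{\dagger_k}\theta=0$, then $h_\theta$ is both closed and co-closed, i.e. harmonic, so $[\eta_\theta]=dh_\theta$ for a harmonic affine section; by Proposition \ref{donaldao4} and the adiabatic form \ref{torcaonula} of the torsion-free equations, $\varphi_\theta$ then solves all of those equations except the last, $d_{\mf{H}}(*\eta)=0$, which for ALE fibres is obstructed at the level of $L^2$-cohomology (cf. \cite{mazzeo}) --- this is the ``adiabatic limit of $d*\varphi=0$'' in the statement. Since every step is natural in $\theta$, running the construction over $\Omega^1_{cl}(Q,\mathfrak{H}/W)$ yields Corollary \ref{corolarao}, with the $k$-harmonic subspace $\Omega^1_{k}(Q,\mathfrak{H}/W)$ cutting out those $\varphi_\theta$ that also satisfy the adiabatic torsion-free condition. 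I expect the crux to be Part (1) --- making the $W$-action, the twistor $\C\p^1$ from $SU(2)_+$, and the rescaling $\C^*$ from $C_{SU(2)_-}(\Gamma)$ descend coherently from the cocycle of $\mf{V}$, and verifying that the associated family genuinely restricts to $M_0$ over the zero section --- together with the non-compact input to Donaldson's existence argument for $\mu_\theta$, where one must check that flatness of $\nabla$ really keeps $F_{\mf{H}_\theta}(\eta_\theta)$ integrable over the whole of $M_\theta^\circ$ and not merely on a bounded neighbourhood of $Q$.
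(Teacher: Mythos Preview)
Your proposal is correct and follows essentially the same route as the paper: the Corollary is deduced from Theorem \ref{teoremito}, whose proof is organized into the construction of the ambient family $u:\mathcal{U}\to\mathcal{E}$ from the cocycle of $\mf{V}$ (Lemma \ref{bundleball}), the production of $\eta_\theta$ with $[\eta_\theta]=dh_\theta$ from the F-terms via the tautological section and local Torelli, the invocation of Proposition \ref{donaldao2} to obtain $(\mf{H}_\theta,\mu_\theta)$ (Corollary \ref{lemamaiscabul}), and finally the identification of the D-term with the mean-curvature/harmonicity condition on $h_\theta$ (Theorem \ref{teoremasso}). One small wording slip: in Part (4) it is $\theta$ that is closed and co-closed, whence the primitive $h_\theta$ is harmonic; and your parenthetical about the reductive centralizer $Z$ being trivial for types $D,E$ is a side remark that should not be conflated with the $\C^*$ coming from $\det:C_{GL(2,\C)_-}(\Gamma)\to\C^*$, which is present in all types and is what the paper actually uses to glue.
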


The key observation that makes Theorem \ref{teoremito} plausible is the fact that, locally, a cameral section takes values in $\R^3 \otimes \mathfrak{h}/W$, the parameter space of smoothings of the fibers $\R^4/\Gamma$ of $M_0$. We will see that the cocycle defining $\mf{V}$ gives us a cameral structure $\mathfrak{H} \subset Ad(\mathcal{P}_G)$, and hence also the bundle $\mathcal{E} = T^*Q\otimes\mathfrak{H}$. Then, by Lemma \ref{lemata}, any Higgs field $\theta$ gives a cameral section $s_\theta$, which can be thought as a choice of fiberwise smoothings for $M_0$.

The main ingredient to prove Theorem \ref{teoremito} will be, once $\mathcal{E}$ is constructed, to pullback the family $\Upsilon: \mathcal{R} \to \C^2\times_{\mathfrak{h}_c/W} \mathfrak{h}_c$ to $\mathcal{E}$. However, it is apparent that a global pullback is impossible since there is no map $\mathcal{E} \to \C^2\times_{\mathfrak{h}_c/W} \mathfrak{h}_c$. There are two ways to circumvent this issue: one is to work over the universal cover of $Q$, where $\mathcal{E}$ becomes trivial and one can take a global pullback. The drawback then is that one needs to solve for $\pi_1(Q)$-invariant Donaldson data. 

The second way, which is the route we take, is to work over a trivialization of $\mathcal{E}$ and construct $\mathcal{U}$ by gluing together local pullbacks of $\Theta$ using the cocycle of $\mf{V}$.





The proof of Theorem \ref{teoremito} will be carried out in the next three sections.

\subsection{A Fibration of Hyperk\"ahler Deformations} \label{hkdeformations}

Recall from the definition of $\mf{V} \to Q$ in section \ref{adeg2orbi} that $\mathcal{P}_+ \to Q$ is the principal spin bundle (where $SU(2)_+ = Spin(3)$) and that $\mathcal{P}_- \to Q$ is a second principal bundle with structure group $C_{SU(2)_-}(\Gamma)$. More precisely, $\Gamma$-invariance of $\mf{V}$ induces a reduction of the structure group of $\mathcal{P}_-$ to $C_{GL(2,\C)_-}(\Gamma)$, and the metric connection on $\mf{V}$ induces a further reduction to $C_{SU(2)_-}(\Gamma)$. Since we are now proceeding to deform $\mf{V}$ to produce ALE fibrations, the connection on $\mf{V}$ is treated as a variable, so in this section the structure group of $\mathcal{P}_-$ is $C_{GL(2,\C)_-}(\Gamma)$.

\begin{lemma}{(Theorem $4.9$, Part $1$)} \label{bundleball}
	There is a vector bundle $t : \mathcal{E} \to Q$ with $\text{rank}(\mathcal{E}) = 3r$ and a family $u: \mathcal{U} \to \mathcal{E}$ of hyperk\"ahler ALE spaces satisfying:
	
	\begin{enumerate}
		\item $\mathcal{U}|_{\ul{\mf{0}}(Q)} \cong M_0$
		\item $\mathcal{U}|_{t^{-1}(q)} \cong {\mathcal{K}}$
	\end{enumerate}
	where $\ul{\mf{0}} : Q \to \mathcal{E}$ denotes the zero-section.
\end{lemma}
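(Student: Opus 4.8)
The plan is to globalize the Kronheimer/twistor family of Proposition~\ref{rodzao} over $Q$, using the cocycle of $\mf{V}$ as gluing data. \textbf{Step 1 (the bundle of Cartan subalgebras).} First I would construct the rank-$r$ bundle $\mathfrak{H}_c \to Q$ of complex Cartan subalgebras, together with its real form $\mathfrak{H} := \mathfrak{H}_c \cap Ad(\mathcal{P}_G)$, carrying the fiberwise $W$-action anticipated in Section~\ref{cameralsections}. The $\Gamma$-invariance of $\mf{V}$ reduces the structure group of $\mathcal{P}_-$ to $C_{GL(2,\C)_-}(\Gamma)$, which (cf.\ Remark~\ref{centralizando}) is a complex torus; its determinant $\det : C_{GL(2,\C)_-}(\Gamma) \to \C^*$ realizes the scaling symmetry of the Brieskorn-Grothendieck miniversal deformation $\Psi$, and this $\C^*$ acts linearly (by scaling) on $\mathfrak{h}_c$ through Kronheimer's construction (Theorem~\ref{krony}), commuting with $W$. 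Writing $\mathcal{P}_-^{\det} \to Q$ for the $\C^*$-bundle obtained by pushing $\mathcal{P}_-$ forward along $\det$, the associated bundle $\mathfrak{H}_c := \mathcal{P}_-^{\det} \times_{\C^*} \mathfrak{h}_c$ is then a rank-$r$ complex vector bundle with a fiberwise $W$-action, and $\mathfrak{H}_c/W$ recovers the bundle $\mathcal{D}$ of the Remark in Section~\ref{cameralsections}.

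\textbf{Step 2 (the bundle $\mathcal{E}$ and the gluing of $\mathcal{U}$).} I would then set $\mathcal{E} := \Lambda^{2,+}\mf{V} \otimes \mathfrak{H}$; since $\Lambda^{2,+}\mf{V} \cong \mathrm{Sym}^2 \slashed{S}^{+}_{Q}$ depends only on the fixed spin bundle of $Q$, this is a genuine rank-$3r$ real vector bundle, and the twisting isomorphism $\eta$ with the induced metric gives $\mathcal{E} \cong T^*Q \otimes \mathfrak{H}$. The key point is that each fiber $\mathcal{E}_q \cong \R^3 \otimes \mathfrak{h}$ (with $\R^3 = \text{Im}\,\hh$ the adjoint $SU(2)_+$-representation) is precisely the parameter space of Kronheimer's hyperk\"ahler deformations of $\C^2/\Gamma$ (Theorems~\ref{krony} and~\ref{kronaaaldo}, Proposition~\ref{rodzao}), in such a way that the $SU(2)_+$-factor of the structure group acts as twistor rotations of $\text{Im}\,\hh$ while the $\det$-image of $C_{SU(2)_-}(\Gamma)$ acts on $\mathfrak{h}$ through the Slodowy symmetry group. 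To build $\mathcal{U}$ I would fix an open cover $\{U_\alpha\}$ of $Q$ trivializing $\mf{V}$, hence $\mathfrak{H}$ and $\mathcal{E}$, with cocycle $\psi_{\alpha\beta}$ valued in $SU(2)_+ \times C_{GL(2,\C)_-}(\Gamma)$, and over $\mathcal{E}|_{U_\alpha} \cong U_\alpha \times (\R^3 \otimes \mathfrak{h})$ set $\mathcal{U}_\alpha := U_\alpha \times \mathcal{K}$, where $\mathcal{K}$ is the universal Kronheimer/twistor family $\mathcal{R}$ of Proposition~\ref{rodzao} (a smooth ALE fibration away from the discriminant $\R^3 \otimes \Delta$). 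The $SU(2)$-action on Kronheimer's $Y_\Gamma$ rotating $(I,J,K)$ and the $\C^*$-equivariance of Theorem~\ref{krony} and Proposition~\ref{rodzao} furnish a fiberwise lift $\widehat{\psi}_{\alpha\beta}$ of $\psi_{\alpha\beta}$ to automorphisms of $\mathcal{K}$, which inherits the cocycle identity; gluing the $\mathcal{U}_\alpha$ along the $\widehat{\psi}_{\alpha\beta}$ produces $u : \mathcal{U} \to \mathcal{E}$, and away from the fiberwise discriminant loci (whose preimage is the dense open $\mathcal{U}^\circ \subset \mathcal{U}$) the map $u$ restricts to a smooth ALE fibration.

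\textbf{Step 3 (the two identifications).} Over a single fiber $t^{-1}(q)$, $\mathcal{U}|_{t^{-1}(q)}$ is by construction one chart's worth of the universal family, hence $\cong \mathcal{K}$. Over the zero section $\ul{\mf{0}}(Q) \cong Q$, the fiber of $\mathcal{K}$ over $0 \in \R^3 \otimes \mathfrak{h}$ is $\Psi^{-1}(0) \cong \C^2/\Gamma$, and the gluing reduces to the restriction of the $\widehat{\psi}_{\alpha\beta}$ to this central fiber; one checks that this restricted action agrees with the natural $C_{SU(2)_-}(\Gamma)$-action on $\mf{V}_q/\Gamma = \R^4/\Gamma = \C^2/\Gamma$ (both descending from the defining inclusion $\Gamma \leq SU(2)_-$ via McKay duality), whence $\mathcal{U}|_{\ul{\mf{0}}(Q)} \cong \text{tot}(\mf{V})/\Gamma = M_0$.

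\textbf{The main obstacle.} The real content lies in the equivariance underlying the gluing of Step~2: one must exhibit a genuine action of $SU(2)_+ \times C_{GL(2,\C)_-}(\Gamma)$ on the universal family $\mathcal{K}$ covering the linear action on $\R^3 \otimes \mathfrak{h}$. The $SU(2)_+$-part is standard (rotation of the twistor sphere of $Y_\Gamma$), but the $C_{GL(2,\C)_-}(\Gamma)$-part requires carefully matching, through the McKay correspondence, the determinant torus with the symmetry group $\C^* \times Z$ of the Slodowy slice, and --- crucially for the zero-section identification of Step~3 --- checking that the induced action on $\Psi^{-1}(0)$ is the standard one on $\C^2/\Gamma$. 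Granting these equivariances, the cocycle compatibility, the global construction of $\mathcal{U}$, and both identifications become routine bookkeeping.
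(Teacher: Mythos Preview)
Your proposal is correct and follows essentially the same route as the paper: construct $\mathfrak{H}_c$ as the associated bundle for the determinant character of $C_{GL(2,\C)_-}(\Gamma)$, build $\mathcal{E}$ with fibers the HK-deformation space $\R^3\otimes\mathfrak{h}$ (the paper writes this as the fiber product $\sph\Lambda^{2,+}_\nabla \times_{\mathfrak{H}_c/W}\mathfrak{H}_c$ with fibers $\C\p^1\times_{\mathfrak{h}_c/W}\mathfrak{h}_c$, but this is the same object), pull back the universal family $\mathcal{R}$ over a trivializing cover, and glue via a lift of the cocycle $\zeta_{\mf{V}}$ to $\mathrm{Aut}(\mathcal{R})$. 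The only notable difference is how the ``main obstacle'' is dispatched: the paper obtains the $C_{GL(2,\C)_-}(\Gamma)$-action on $\mathcal{R}$ directly from universality of the Slodowy slice (any automorphism of $\C^2/\Gamma$ extends to an automorphism $\rho_{\mathcal{S}}$ of $\mathcal{S}$, hence to $\mathcal{R}$ by base change), rather than by matching with the symmetry group $\C^*\times Z$ as you suggest.
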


\begin{proof}

Let $Ad(\mathcal{P}_G) \to Q$ be the adjoint bundle. Then at each fiber we have a subspace $\mathfrak{h} \subset \mathfrak{g}$, giving us a sub-bundle of Cartan subalgebras:

\beq \mathfrak{H} \to Q \eeq

This can be constructed explicitly as follows:

First, observe that the group $N_{GL(2,\C)_-}(\Gamma)$ acts on $\C^2$ by automorphisms normalizing $\Gamma$, so it descends to an action by automorphisms on $\C^2/\Gamma$. By universality of $\Psi : \mathcal{S} \to \mathfrak{h}_c/W$, we have that $N_{GL(2,\C)_-}(\Gamma) \leq Aut(\mathcal{S})$. In particular, it acts on the base $\mathfrak{h}_c/W$. Explicitly, the action maps $g \in N_{GL(2,\C)_-}(\Gamma)$ to $\rho_-(g) \in GL(\mathfrak{h}_c)^W$ such that:

\[ \rho_-(g)(X) = (\delta_g(X),\det(g)X) \]
where the first map can be described, using the McKay correspondence, as the map that sends an irrep $\gamma$ of $\Gamma$ to $\gamma \circ C_g$. So the upshot is that in the presence of a principal $N_{GL(2,\C)}(\Gamma)$-bundle, this construction produces a $W$-invariant bundle of Cartans $\mathfrak{H}_c$ and hence also a conic bundle $\mathfrak{H}_c/W$.

In our case of interest, $\mathfrak{H}_c$ is to be constructed from the cocycle of $\mf{V}$. It is clear that $\rho_-(g)$ is just the determinant action for $g \in C_{GL(2,\C)_-}(\Gamma)$. Hence the associated bundle $\mathfrak{H}_c$ is a $\C^*$-bundle, and its intersection with $Ad(\mathcal{P}_G)$ gives the desired bundle $\mathfrak{H} \to Q$ of real Cartans.




We define:

\beq t: \mathcal{E}_c = \sph\Lambda^{2,+}_\nabla \times_{\mathfrak{H}_c/W} \mathfrak{H}_c \to Q \eeq

Recalling that $\Lambda^{2,+}_\nabla$ is the adjoint bundle of $\mathcal{P}_+$, we note that the fibers of $\mathcal{E}$ are identified with $\C^2\times_{\mathfrak{h}_c/W} \mathfrak{h}_c$ - the base of $\Upsilon$. Crucially, the bundle $\mathcal{E}$ is the image of the cocycle $\zeta_\mf{V}$ of $\mf{V}$ via the map

\[ Ad \times \rho_- : G_\mf{V} \to Aut(\C\p^1\times_{\mathfrak{h}_c/W} \mathfrak{h}_c)  \]
where we recall that $G_\mf{V} = SU(2)_+ \times C_{GL(2,\C)_-}(\Gamma)$ is the structure group of $\mf{V}$. More precisely, because $\rho_-$ acts by dilations, all that matters is that $\zeta : \C^2 \to \mathfrak{h}_c/W$ is $(SU(2)_+,\C^*)$-equivariant, which it is since the $SU(2)_+$-action is nothing but a change of complex structure in $\mf{V}$ (this construction is a special case of a more general statement of existence of equivariant fiber products). 

Thus we conclude that the bundle $\mathcal{E}$ is associated to $\mathcal{P}_+\times_Q \mathcal{P}_-$ via the action:

\begin{align*}  \begin{array}{ c c c c c l l}
SU(2)_+\times C_{GL(2,\C)_-}(\Gamma) & \times & \C\p^1\times_{\mathfrak{h}_c/W} \mathfrak{h}_c & \to & \C\p^1\times_{\mathfrak{h}_c/W} \mathfrak{h}_c \\
(g_+,g_-) &  &  ([a],\chi_c) &  \mapsto & (Ad(g_+)[a],\det(g_-)\chi_c)
\end{array}
\end{align*} 
and is naturally a \emph{bundle of HK deformations over $Q$}. 

For $g=(g_+,g_-) \in G_\mf{V}$ and $b = ([a],\chi_c) \in \C\p^1\times_{\mathfrak{h}_c/W} \mathfrak{h}_c$ we denote: 

\[ G_g(b) := (Ad(g_+)[a],\det(g_-)\chi_c) \]

Now let $\mathfrak{U} = \left\{ U_i \subset Q; i \in I \right\}$ be an open cover of $Q$ trivializing both $\mathcal{P}_+$ and $\mathcal{P}_-$ (and in particular, trivializing $t$), and 
\[ \mathfrak{U}_\mathcal{E} := \left\{ U_i \times \C\p^1\times_{\mathfrak{h}_c/W} \mathfrak{h}_c ; U_i \in \mathfrak{U}\right\} \] 
an open cover of $\text{tot}(\mathcal{E}_c)$. We have projection maps:

\[ u_i : U_i \times(\C\p^1\times_{\mathfrak{h}_c/W} \mathfrak{h}_c) \to \C\p^1\times_{\mathfrak{h}_c/W} \mathfrak{h}_c \]

We define:

\beq \mathcal{U}_i := u_i^*(\mathcal{R}) \to U_i\times (\C\p^1\times_{\mathfrak{h}_c/W} \mathfrak{h}_c) \eeq

We would like to patch together the $\mathcal{U}_i$'s, and in order to do so we will define an action of $G_\mf{V}$ on $\mathcal{R}$. 


We know there is an action of $C_{GL(2,\C)_-}(\Gamma)$ by automorphisms on $\mathcal{S}$: 
\[ \rho_{\mathcal{S}} : C_{GL(2,\C)_-}(\Gamma) \times \mathcal{S} \to \mathcal{S} \]
making $\Psi$ equivariant with respect to the determinant action on the base. Since $\mathcal{R}$ is obtained from $\mathcal{S}$ by base change, we can define:

\begin{align*}  \begin{array}{ c c c c c c l l}
\rho_{\mathcal{R}} : & G_\mf{V} & \times &\mathcal{R} & \to & \mathcal{R} \\
& g &  &  (b,p) &  \mapsto & (G_g(b), \rho_{\mathcal{S}}(g_-)p)
\end{array}
\end{align*}

Let $\zeta_\mf{V}$ be the \v{C}ech cocycle of transition functions of $\mf{V}$ associated to the covering $\mathfrak{U}$ and let 

\[ (\rho_{\mathcal{R}})_* : \check{\mathrm{H}}^1(\mathfrak{U},G_\mf{V}) \to \check{\mathrm{H}}^1(\mathfrak{U},Aut(\mathcal{R})) \]
be the map on cocycles induced by $\rho_{\mathcal{R}}$. Now use the cocycle $(\rho_{\mathcal{R}})_*(\zeta_\mf{V})$ to glue the $(U_i,u_i)$'s over $\mathcal{E} := (\mathcal{E}_c)^\R$ into a global family:

\beq u : \mathcal{U} \to \mathcal{E} \eeq



\end{proof}

\subsection{Closed \texorpdfstring{G\textsubscript{2}} --Structures from Closed Cameral Sections} \label{closedg2}

Let $q=t\circ u$. We now have a diagram:

\beq \begin{tikzcd}
\mathcal{U} \arrow[rightarrow]{rd}{u} \arrow[rightarrow]{dd}{q} & \\
& \mathcal{E} \arrow[rightarrow]{ld}{t} \\
Q
\end{tikzcd}
\eeq

Consider the bundle $t: \mathcal{E} \to Q$. It comes with a fiberwise $W$-action which is free outside the zero-section, so there is a $|W|$-to-$1$ map $e: \mathcal{E} \to \mathcal{E}_W := \mathcal{E}/W$, inducing a fiber bundle:

\beq t_W : \mathcal{E}_W \to Q \eeq

Suppose one is given a cameral section $s : Q \to \mathcal{E}_W$ of $t_W$. As mentioned before, this is equivalent to giving a cameral cover $s^*e: s^*\mathcal{E} \to Q$, which we will write simply as $c : \widetilde{Q}_s \to Q$. So we see $s$ as a multisection of $t$ whose image is a Galois $W$-cover of $Q$. The cameral cover will be smooth if $s$ is chosen to be transversal to the discriminant locus $\Delta_\mathcal{E} = \Lambda^{2,+}_\nabla \otimes \Delta_\mathbb{H}$ of $e$, so we will always assume this is the case.

Now, suppose we have $\theta \in \Omega^1(Q,Ad(\mathcal{P}_G))$ such that $[\theta\wedge\theta]=0$. Then $\theta$ defines a cameral section:

\[ s_\theta : Q \to \mathcal{E}_W \] 
and a cameral cover

\[ c : \widetilde{Q}_\theta \to Q \] 

Moreover, $\theta$ determines a $7$-dimensional manifold:

\beq M_\theta := (e\circ u)^{-1}(s_\theta(Q)) \eeq 
which comes with a natural projection map: 
\beq \pi_\theta := q|_{M_\theta} : M_\theta \to Q \eeq

We will phrase most of our constructions below in terms of $\theta$ rather than $s_\theta$. This will cause no confusion as long as the reader remembers that if $\theta \neq \theta'$ are such that $s_\theta = s_{\theta'}$ (i.e., if $\theta'=w(\theta)$ for some $w \in W$), then the corresponding constructions are isomorphic. In other words, the $G_2$-manifolds $(M_\theta,\varphi_\theta)$ we will construct below are parametrized by $s_\theta$ rather than $\theta$.

An important consequence of \ref{important} that will be used repeatedly below is that a the pullback of $\mathcal{E}$ to $\widetilde{Q}_\theta$ has no $W$-action. Hence, locally on $\widetilde{Q}_\theta$, $c^*\theta$ is described by $r$ independent $1$-forms $\theta_1,\ldots \theta_r$.

For the next result, we will need the following construction: recall that $u : \mathcal{U} \to \mathcal{E}$ is a family of hyperk\"ahler ALE $4$-manifolds with a structure of smooth fiber bundle with fibers $S$ in the complement $ \mathcal{E}^\circ$ of a discriminant locus $\Delta_\mathcal{E}$ given by the fibration of root hyperplanes in $\mathfrak{H}_c$ over $Q$. Let $u^\circ : \mathcal{U}^\circ \to \mathcal{E}^\circ$ be the bundle map. Then there is a family of lattices over $\mathcal{E}^\circ$ given by $H^2(\mathcal{U}^\circ/\mathcal{E}^\circ,\Z)$. It gives rise to a flat bundle:

\[ \ell : \mathcal{H} \to \mathcal{E}^\circ \]
with $O(\mathcal{H}) = W$, and a flat \emph{affine} bundle:

\[ \hat{\ell} : \widehat{\mathcal{H}} \to \mathcal{E}^\circ \]
with the same fibers but with structure group an extension of $H^2(S,\R)$ by $W$.

If $M_\theta^\circ$ is now the part of $M_\theta$ contained in $\mathcal{E}^\circ$, we have by restriction the bundles:

\[ \ell_\theta: \mathcal{H}_\theta \to M_\theta^\circ \]
\[ \hat{\ell}_\theta : \widehat{\mathcal{H}}_\theta \to M_\theta^\circ  \]

\begin{lemma}{(Theorem $4.9$, Part $2$)}
Let $\theta \in \Omega^1(Q,Ad(\mathcal{P}_G))$ and let $\nabla$ be a flat $G$-connection on $\mathcal{P}_G$ such that (c.f. \ref{pwequations}):

 \begin{align} 
 \begin{array}{ c c l l l}
	[\theta\wedge\theta] & = & 0 \\
	d_\nabla\theta & = & 0 \\
	\end{array} 
\end{align}                                            

Then there is an element 

\[ \eta_\theta \in \Omega^2(M_\theta^\circ/Q)\otimes \pi_\theta^*\Omega^1(Q) \]
such that 

\beq [\eta_\theta] =dh_\theta \label{duuuh} \eeq 
for some affine section $h_\theta$ of $\widehat{\mathcal{H}}_\theta$.

Moreover, if $\theta=\ul{\mf{0}}$ is the zero section, then $\eta_{\ul{\mf{0}}}= \eta_0$.


	

\end{lemma}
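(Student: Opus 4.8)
The plan is to obtain $\eta_\theta$ by restricting to $M_\theta^\circ$ the relative hyperk\"ahler triple that $\mathcal{U}^\circ$ inherits from the Kronheimer family $\mathcal{R}$, re-read through the twisting isomorphism $\eta_0\colon TQ\cong\Lambda^{2,+}_\nabla$ as a vertical two-form valued in horizontal one-forms, and then to integrate its fiberwise cohomology class using the flat connection $\nabla$. \emph{Step 1: the relative triple on $\mathcal{U}^\circ$.} By Proposition \ref{rodzao}, over its generic locus $\mathcal{R}$ carries a relative hyperk\"ahler triple $\ul{\omega}_{\mathrm{unf}}=(\omega_\chi,\omega_c)$ with $\omega_c\in\Omega^2_{\Upsilon_\chi}(\mathcal{R}_\chi)\otimes\mathcal{O}(2)$. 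Under the gluing cocycle $(\rho_{\mathcal{R}})_*(\zeta_\mf{V})$ of Lemma \ref{bundleball}, the $SU(2)_+$-factor rotates the twistor $\C\p^1$ --- equivalently it rotates the triple as an element of $\mathrm{Im}\,\hh=\Lambda^{2,+}_\nabla$, which is exactly what the twist by $\mathcal{O}(2)$ records --- while the $C_{GL(2,\C)_-}(\Gamma)$-factor acts on $\mathcal{S}$ through $\rho_{\mathcal{S}}$ and hence rescales $\omega_\chi$ and $\omega_c$ through the character $\det$. Thus $\ul{\omega}_{\mathrm{unf}}$ is covariant for the cocycle and glues to a relative two-form on $\mathcal{U}^\circ$ over $\mathcal{E}^\circ$ with values in the pullback of $\Lambda^{2,+}_\nabla$; composing with $\eta_0^{-1}$ and the metric $\eta_0$ induces on $Q$, we regard it as taking values in the pullback of $T^*Q$, and its restriction along $M_\theta^\circ\embedd\mathcal{U}^\circ$ is the desired
\[ \eta_\theta\in\Omega^2(M_\theta^\circ/Q)\otimes\pi_\theta^*\Omega^1(Q), \]
which satisfies $d_f\eta_\theta=0$ since each fiber is hyperk\"ahler.

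\emph{Step 2: identifying $[\eta_\theta]$ and integrating.} Pull back along the cameral cover $c_\theta\colon\widetilde{Q}_\theta\to Q$; by \ref{important} the pullback $c_\theta^*\theta$ is locally given by $r$ honest one-forms $\theta_1,\dots,\theta_r$, the eigenvalues of $\theta$, so that the fiber over a point is the Kronheimer space $S_\xi$ with $\xi=(\theta_1,\theta_2,\theta_3)$ read through $\eta_0$. Combining the period normalizations $[\omega_\chi]=\chi$ and $[\omega_c|_{\mathrm{fiber}}]=\chi_2+i\chi_3$ built into Kronheimer's construction with the McKay identification $H^2(S,\R)\cong\mathfrak{h}$, the fiberwise cohomology class of $\eta_\theta$ is $\theta$ itself, viewed as a one-form with coefficients in the flat bundle $\mathcal{H}_\theta=H^2(M_\theta/Q,\R)$ (which is $\mathfrak{H}$ under McKay). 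The hypothesis $d_\nabla\theta=0$ says precisely that this one-form is closed for the flat connection $(\mathcal{H}_\theta,\nabla)$. Now $\widehat{\mathcal{H}}_\theta$, being built from the local system $H^2(\mathcal{U}^\circ/\mathcal{E}^\circ,\Z)$, is the flat affine bundle whose linearization is $(\mathcal{H}_\theta,\nabla)$ and whose translation cocycle is the one carried by the periods of $\ul{\omega}_{\mathrm{unf}}$, i.e.\ by $\theta$; in other words it is the bundle of local $\nabla$-primitives of $\theta$. Its tautological section $h_\theta$ --- a choice of local primitive, well defined globally because the ambiguity is a torsor under $\mathcal{H}_\theta$ --- then satisfies $dh_\theta=\theta=[\eta_\theta]$, which is \ref{duuuh}. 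Equivalently: any section $h_0$ of $\widehat{\mathcal{H}}_\theta$ obtained from a partition of unity has $dh_0$ representing the same class as $\theta$ in $H^1_{d_\nabla}(M_\theta^\circ,\mathcal{H}_\theta)$, so correcting $h_0$ by a section of $\mathcal{H}_\theta$ produces $h_\theta$ with $dh_\theta=\theta$.

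\emph{Step 3: the base point, and the main difficulty.} When $\theta=\ul{\mf{0}}$ the cameral section is the zero section, $M_{\ul{\mf{0}}}=M_0$, and the fiber of $\mathcal{U}$ over $0\in\mathcal{E}$ is $S_0=\C^2/\Gamma$; the relative triple $\ul{\omega}_{\mathrm{unf}}$ restricts over the zero section to the flat hyperk\"ahler triple on $\C^2/\Gamma\setminus\{0\}$, and $(\rho_{\mathcal{R}})_*(\zeta_\mf{V})$ restricts there to $\zeta_\mf{V}$ acting through $SU(2)_+\times C_{GL(2,\C)_-}(\Gamma)$ on $(\C^2/\Gamma,\ul{\omega})$ --- which is exactly the data defining $\eta_0$ on $M_0=\text{tot}(\mf{V})/\Gamma$ via the twisting isomorphism. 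Hence $\eta_{\ul{\mf{0}}}=\eta_0$ on $M_0\setminus Q$. I expect the main obstacle to be the identification in Step 2 of the flat affine bundle $\widehat{\mathcal{H}}_\theta$ coming from $H^2(\mathcal{U}^\circ/\mathcal{E}^\circ,\Z)$ with the bundle of local $\nabla$-primitives of $\theta$: this requires that the flat structure the Kronheimer family puts on $H^2$ of the fibers agrees with the one induced by the given flat connection $\nabla$, which is where one must use that $\nabla$ is compatible with the cocycle $\zeta_\mf{V}$ used to construct $\mathcal{E}$ and $\mathcal{U}$, and where the $W$-twist in the structure group of $\widehat{\mathcal{H}}_\theta$ --- reflecting the $W$-ambiguity of the cameral section --- has to be tracked with care.
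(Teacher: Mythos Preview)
Your proposal is correct but inverts the paper's order of construction, and this inversion is precisely what creates the obstacle you flag.

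The paper works bottom-up: it starts from the tautological section $\tau$ of $t^*\mathcal{E}$, identifies $t^*\mathfrak{H}\cong\mathcal{H}$ via McKay, and observes that $\tau\circ\theta$ is a one-form with values in fiberwise cohomology. On the cameral cover, $d_\nabla\theta=0$ becomes $d\theta_i^V=0$ for each of the $r$ scalar components, so local primitives $h_i^V$ exist; on overlaps they differ by constants, hence glue directly to an affine section $h_\theta$ of $\widehat{\mathcal{H}}_\theta$ with $\tau\circ\theta=dh_\theta$. Only then does the paper produce $\eta_\theta$, by invoking the Torelli theorem for ALE spaces to choose a local hyperk\"ahler triple $\ul{\omega}_\theta^U$ representing the three classes $(\alpha_1^\theta,\alpha_2^\theta,\alpha_3^\theta)$ determined by $\tau\circ\theta$, and setting $\eta_\theta^U=\sum(\omega_\theta^U)_i\,dx_i^U$.

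You instead build $\eta_\theta$ first, by restricting the glued relative triple $\ul{\omega}_{\mathrm{unf}}$ on $\mathcal{U}^\circ$, and then try to integrate $[\eta_\theta]$ to $h_\theta$. This is legitimate --- indeed your global construction of $\eta_\theta$ is arguably cleaner than the paper's local Torelli argument, since Kronheimer's family is exactly what realizes Torelli for ALE spaces --- but it forces you to identify the abstractly defined affine bundle $\widehat{\mathcal{H}}_\theta$ with the bundle of local $\nabla$-primitives of $\theta$, which is the ``main obstacle'' you correctly identify. The paper simply sidesteps this: it never needs to know in advance what the translation cocycle of $\widehat{\mathcal{H}}_\theta$ is, because the local primitives $h_i^V$ and their constant discrepancies \emph{define} the affine structure into which $h_\theta$ lands. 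Your partition-of-unity alternative in Step~2 is also unnecessary once one takes this route. In short: both approaches work, but the paper's local-primitives-first argument is more elementary and dissolves the difficulty you anticipated.
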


\begin{proof}

	Consider the tautological bundle:
	
	\[ t^*\mathcal{E} \to \text{tot}(\mathcal{E}) \]
endowed with the tautological section $\tau : \text{tot}(\mathcal{E}) \to t^*\mathcal{E}$. We use the McKay correspondence to identify $t^*\mathfrak{H} \cong \mathcal{H}$ over $\mathcal{E}^\circ$. Under this identification, $\tau \circ \theta$ is a $1$-form with values in cohomology classes whose periods are specified by $\theta$.

Let again $\mathfrak{U} = \left\{ U_i ;i \in I \right\}$ be a good open covering of $Q$ trivializing $\mathcal{E}$ and let $\mathfrak{V} = c^{-1}\mathfrak{U}$ be the pullback open covering of $\widetilde{Q}_\theta$.

Over $U \in \mathfrak{U}$, $\nabla$ is equivalent to the trivial connection so $d\theta^U = 0$. Let $V = c^{-1}(U)$. We have $\theta^V = (\theta_1^V,\ldots,\theta_r^V)$ where $\forall i = 1,\ldots,r$, $d\theta_i=0$. This implies that $d(\tau \circ \theta_i)=0$ and thus: 

\[ \tau \circ \theta_i = dh^V_i \] 
for some local function $h^V_i : V \subset \widetilde{Q}_\theta \to \R$. 

Because $\theta$ is a global section, given two open sets $V$, $V'$ and functions $h^V_i$, $h^{V'}_i$, one must have:

\[ dh^V_i|_{V\cap V'} = dh^{V'}_i|_{V\cap V'} \]
and hence:

\[ h^V_i|_{V\cap V'} = h^{V'}_i|_{V\cap V'} + c_{VV'}  \]
where $c_{VV'}$ is a constant.

Passing to the full open covering $\mathfrak{V}$ of $\widetilde{Q}_\theta$ and projecting down to $Q$, the $h_i$'s combine to an \emph{affine} section:

\[ h_\theta : Q \to \widehat{\mathcal{H}}_\theta \] 
such that $\tau \circ \theta = dh_\theta$.

Finally, choosing local frames $dx_i^U$ for $T^*Q$ (with $U \in \mathfrak{U}$), $\tau \circ \theta$ determines for every $x \in Q$ three elements $(\alpha_1^\theta,\alpha_2^\theta,\alpha_3^\theta)$ of $H^2(\pi_\theta^{-1}(Q))$. Since we are only looking over $M_\theta^\circ$, the Torelli theorem implies that there is a local hyperk\"ahler triple $\ul{\omega}_\theta^U$ representing $(\alpha_1^\theta,\alpha_2^\theta,\alpha_3^\theta)$. Since the local form:

\beq \eta_\theta^U = \sum_{i=1}^3 (\omega_\theta^U)_i dx_i^U \eeq
comes from $\tau \circ \theta$, it glues to a global form

\beq \eta_\theta \in \Omega^2(M_\theta^\circ/Q)\otimes \pi_\theta^*\Omega^1(Q) \eeq
which by definition satisfies $[\eta_\theta] =dh_\theta$. The statement for $\theta=\ul{\mf{0}}$ is clear from the construction.

\end{proof}

\begin{corollary} \label{lemamaiscabul}
Let $\theta$ and $\eta_\theta$ be as in the previous lemma. Then there is a connection $\mf{H}_\theta$ on $\pi_\theta$ and a positive $\mu_\theta \in \pi_\theta^*\Omega^3(Q)$ such that $(\eta_\theta,\mu_\theta,\mf{H}_\theta)$ satisfies Donaldson's criteria: 
	
	\beq \twoparteq{d_f\eta_\theta = 0}{d_{\mf{H}_\theta}\eta_\theta=0} \qquad \qquad \twoparteq{d_f\mu_\theta=-F_{\mf{H}_\theta}(\eta_\theta)}{d_{\mf{H}_\theta}\mu_\theta=0}  \label{donaldissimo}\eeq 
	
	It follows that
\beq \varphi_\theta := \eta_\theta + \mu_\theta \eeq
is a closed $G_2$-structure on $M_\theta^\circ$ making $\pi_\theta : M_\theta \to Q$ a (generically) coassociative ALE fibration.
	
\end{corollary}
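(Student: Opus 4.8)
The plan is to obtain $(\eta_\theta,\mu_\theta,\mf{H}_\theta)$ by applying Donaldson's Proposition~\ref{donaldao2} over $Q$ to the element $\eta_\theta$ constructed in the previous lemma, and then to read off the closed $G_2$-structure from Proposition~\ref{donaldao}. First I would check the input hypotheses of \ref{donaldao2}. By the construction of $\eta_\theta$, over a trivialising cover $\mathfrak{U}$ of $Q$ one has $\eta_\theta = \sum_{i=1}^3 (\omega^U_\theta)_i\, dx_i^U$ with each $(\omega^U_\theta)_i$ a genuine hyperk\"ahler $2$-form on the ALE fibre $S$; these are $d_f$-closed, so $d_f\eta_\theta=0$, and they span a maximal positive-definite subspace of the fibrewise $\Lambda^2$ at each point, i.e. $\eta_\theta$ is a hyperk\"ahler element. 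The affine section $h_\theta$ of $\widehat{\mathcal{H}}_\theta$ supplied by the lemma satisfies $[\eta_\theta]=dh_\theta$, hence the local compatibility $[\omega_i]=\partial h_\theta/\partial x_i$ holds tautologically; over the smooth locus $M_\theta^\circ$, where the cameral section $s_\theta$ meets the complement of the discriminant $\Delta_\mathcal{E}$, the classes $[\omega_i]$ are linearly independent, so $h_\theta$ is an immersion, and against the pairing on $H^2(M_\theta^\circ/Q)$ inherited from Kronheimer's construction it is a positive section, since the periods of a hyperk\"ahler triple span a positive $3$-plane.

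With these hypotheses in place, the first conclusion of Proposition~\ref{donaldao2} produces a connection $\mf{H}_\theta$ on $\pi_\theta : M_\theta^\circ\to Q^\circ$ with $d_{\mf{H}_\theta}\eta_\theta=0$; concretely $\mf{H}_\theta$ is a correction of the flat horizontal distribution $\mf{H}_0$ attached to $\nabla$ by a term built from fibrewise primitives of $\eta_\theta$ minus a fixed cohomology representative. To invoke the second conclusion I would verify the three asymptotic conditions: $H^1(S)=0$ since ALE spaces of type $\Gamma$ are simply connected; $\eta_\theta\sim O(1)$ since the fibrewise hyperk\"ahler forms are asymptotic, with polynomial decay, to the fixed flat forms on $\C^2/\Gamma$; and $\mf{H}_\theta$ asymptotically flat --- here I would use that the ALE asymptotic model is the \emph{same} for all fibres of the Kronheimer family, so the correction term defining $\mf{H}_\theta$ decays at infinity and $F_{\mf{H}_\theta}\to F_{\mf{H}_0}=0$, flatness of $\nabla$ being essential. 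By the remark following Proposition~\ref{donaldao2}, $d_f\mu=-F_{\mf{H}_\theta}(\eta_\theta)$ then has a solution, and since $F_{\mf{H}_\theta}(\eta_\theta)\sim O(1)$ the resulting fibrewise volume form can be made positive by multiplying the pulled-back volume form of $Q$ by a sufficiently large $\lambda : Q\to\R_+$; this is $\mu_\theta = \lambda\,\pi_\theta^* v_Q$, for which $d_{\mf{H}_\theta}\mu_\theta=0$ holds automatically as $\mf{H}_\theta$ has rank $3$. Thus $(\eta_\theta,\mu_\theta,\mf{H}_\theta)$ satisfies Donaldson's criteria \ref{donaldissimo}, so by Proposition~\ref{donaldao} the form $\varphi_\theta=\eta_\theta+\mu_\theta$ is a closed $G_2$-structure on $M_\theta^\circ$ making $\pi_\theta$ a coassociative ALE fibration; taking $\theta=\ul{\mf{0}}$ recovers $\varphi_0$.

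The main obstacle is the asymptotic analysis in the fibre direction: one must show $\mf{H}_\theta$ is --- globally on $M_\theta^\circ$, not merely near the zero section --- asymptotically flat, and that $F_{\mf{H}_\theta}(\eta_\theta)$ is bounded, since both Donaldson's argument for solving $d_f\mu=-F_{\mf{H}_\theta}(\eta_\theta)$ and the positivity of $\mu_\theta$ rest on this. This reduces to the uniformity of the ALE asymptotics of the Kronheimer family in the deformation parameter, together with the flatness of $\nabla$. A lesser technical point is pinning down the correct positivity pairing on the cohomology of the non-compact ALE fibres, as opposed to the negative-definite intersection form carried by the compact exceptional cycles.
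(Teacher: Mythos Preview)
Your approach matches the paper's exactly: apply Proposition~\ref{donaldao2} to the pair $(\eta_\theta, h_\theta)$ produced by the previous lemma to obtain $\mf{H}_\theta$ and $\mu_\theta$, then conclude via Proposition~\ref{donaldao}. The paper's own proof is two sentences and dispatches the decay hypotheses you flag with the phrase ``since $\nabla_0$ and $\eta_0$ satisfy the decay conditions by hypothesis, and hence so does $\eta_\theta$'' --- i.e.\ asymptotic flatness is built into Definition~\ref{coassade} and is simply asserted to be inherited by the deformed data, so your more careful unpacking of the hypotheses is a strict expansion of what the paper writes.
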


\begin{proof}

The first statement follows from the previous lemma and Proposition \ref{donaldao2}. The existence of $\mu_\theta$ also follows from \ref{donaldao2}, since $\nabla_0$ and $\eta_0$ satisfy the decay conditions by hypothesis, and hence so does $\eta_\theta$.

\end{proof}

As a consequence of these results:

\begin{corollary} The Higgs field $\theta$  defines a $7$-manifold $M_\theta$ with a closed $G_2$-structure $\varphi_\theta$ and a compatible fibration $M_\theta \to Q^\circ$ with coassociative ALE fibers, where $Q^\circ = \theta^{-1}(\mathcal{E}^\circ)$ is dense and open in $Q$. The pair $(M_{\theta},\varphi_\theta)$ is a deformation of $(M_0,\varphi_0)$ as coassociative fibrations.
\end{corollary}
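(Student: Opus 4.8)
The plan is to read this corollary as the synthesis of Lemma \ref{bundleball} and Corollary \ref{lemamaiscabul}, so that the real work is to assemble those statements and then upgrade the pointwise construction of $(M_\theta,\varphi_\theta)$ from $\theta$ into a genuine family over the space of closed cameral sections.

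\emph{Step 1 (the $G_2$-structure and the ALE fibration).} First I recall that by Lemma \ref{bundleball} the fibers of $u:\mathcal{U}\to\mathcal{E}$ over $\mathcal{E}^\circ=\mathcal{E}\setminus\Delta_\mathcal{E}$ are smooth hyperk\"ahler ALE spaces of type $\Gamma$ (via Theorems \ref{krony}, \ref{kronaaaldo}), so $M_\theta^\circ=(e\circ u)^{-1}(s_\theta(Q))\cap\mathcal{U}^\circ$ is a smooth $4$-manifold bundle over $Q^\circ=\theta^{-1}(\mathcal{E}^\circ)$, with $\pi_\theta$ its projection. Corollary \ref{lemamaiscabul} then supplies Donaldson data $(\eta_\theta,\mu_\theta,\mf{H}_\theta)$ satisfying \ref{donaldissimo}, so by Proposition \ref{donaldao} the form $\varphi_\theta=\eta_\theta+\mu_\theta$ is a closed $G_2$-structure on $M_\theta^\circ$ making $\pi_\theta$ coassociative with ALE fibers. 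This settles every clause of the statement except that $Q^\circ$ is dense and open, and the deformation claim.

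\emph{Step 2 (density of $Q^\circ$).} Here I note that $\Delta_\mathcal{E}$ is the fibration of root hyperplanes, which in each fiber of $\mathcal{E}$ (modeled on $\R^3\otimes\mathfrak{h}$) is the union $\bigcup_\upsilon \R^3\otimes\Pi_\upsilon$, of real codimension $3$; hence $\Delta_\mathcal{E}$ has real codimension $3$ in $\mathcal{E}$. Under the transversality convention on cameral sections adopted in \S\ref{closedg2}, $\theta$ meets $\Delta_\mathcal{E}$ transversally, so $\theta^{-1}(\Delta_\mathcal{E})$ has codimension $3$ in the $3$-manifold $Q$, i.e.\ is a finite point set; thus $Q^\circ=Q\setminus\theta^{-1}(\Delta_\mathcal{E})$ is open and dense, and the singular locus of $M_\theta$ lies over these isolated points.

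\emph{Step 3 (the family).} For the deformation statement I organize the $M_\theta$ into one space: let $\mathcal{B}$ be a neighborhood of the zero section inside $\Omega^1_{cl}(Q,\mathfrak{H}/W)$ (cf.\ Corollary \ref{corolarao}), put $\mathcal{F}:=\{(\theta,m)\in\mathcal{B}\times\mathcal{U}\,:\,m\in M_\theta\}$, and let $f:\mathcal{F}\to\mathcal{B}$ be the projection. The central fiber is $f^{-1}(\ul{\mf{0}})=M_{\ul{\mf{0}}}$, which by Lemma \ref{bundleball}(1) together with the identifications $\eta_{\ul{\mf{0}}}=\eta_0$, $\mu_{\ul{\mf{0}}}=\mu_0$ built into the construction (see Theorem \ref{teoremito}) equals $(M_0,\varphi_0)$. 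That $\theta\mapsto\varphi_\theta$ varies continuously (indeed smoothly) then follows by tracing continuity through each step of Corollary \ref{lemamaiscabul}: $\tau\circ\theta$ depends linearly on $\theta$; the hyperk\"ahler triples $\ul{\omega}_\theta^U$ produced from it via the Torelli theorem depend continuously on those cohomology classes; and likewise the connection $\mf{H}_\theta$ from Proposition \ref{donaldao2} and the positive factor $\lambda_\theta$ defining $\mu_\theta$. Hence $f$ realizes $(M_\theta,\varphi_\theta)$ as a deformation of $(M_0,\varphi_0)$ through closed $G_2$-structures on coassociative (generically ALE) fibrations over $Q$.

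\emph{Expected obstacle.} The delicate point is Step 3, not the $G_2$-geometry itself: since the discriminant in $Q$ moves with $\theta$ and the central fiber $M_0$ is a genuine (singular) orbifold, $f:\mathcal{F}\to\mathcal{B}$ is not a family of smooth manifolds, so one must fix the category in which ``deformation'' is meant. I would handle this by keeping everything inside the fixed ambient $\mathcal{U}$, so that $\mathcal{F}\subset\mathcal{B}\times\mathcal{U}$ is a closed subspace and $\varphi_\theta$ is the restriction of the globally defined positive form $\eta_\theta+\lambda_\theta v_Q$ on all of $M_\theta$ (exactly as in \S\ref{adeg2platy}); the $G_2$-positivity and coassociativity then hold off the isolated singular points, which is precisely the content claimed. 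A scheme-theoretic or analytic-space packaging is possible but unnecessary here and is left to future work.
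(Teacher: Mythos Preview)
Your proposal is correct and follows essentially the same approach as the paper. The paper gives no explicit proof of this corollary at all, simply stating it ``as a consequence of these results,'' and then immediately afterward packages the family via the evaluation map $\tau(q,s)=s(q)$ and the pullback $\mathcal{F}=\tau^*(e\circ u)^{-1}\mathcal{U}$ --- which is exactly your Step~3 description $\{(\theta,m):m\in M_\theta\}$ rewritten categorically. Your Step~2 density argument is actually more detailed than anything the paper provides; the paper only invokes the standing transversality convention from \S\ref{closedg2} without spelling out the codimension count.
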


We can now provide a good visualization of our family of $7$-manifolds. Consider the diagram:

\beq \begin{tikzcd}
\mathcal{F}  \arrow[rightarrow]{d}{w:=\tau^*(e\circ u)} \arrow[bend right=90,swap]{dd}{f} & \mathcal{U} \arrow[swap]{d}{e\circ u} \ar[bend left=30, near end]{dd}{q}  \\
Q \times C^\infty_{cl}(Q,\mathcal{E}_W) \arrow[rightarrow]{r}{\tau} \arrow[swap]{d}{\pi_2}  & \mathcal{E}_W  \arrow[swap]{d}{t_W}  \\
C^\infty_{cl}(Q,\mathcal{E}_W) & Q 
\end{tikzcd}
\eeq

Here, $\tau$ is the \emph{evaluation map}: $\tau(q,s) := s(q)$ and $\mathcal{F}$ is the pullback of $\mathcal{U}$ by $\tau$. From now on, we write $\mathcal{B} :=  C^\infty_{cl}(Q,\mathcal{E}_W)$. 

Our family of interest is $f : \mathcal{F} \to \mathcal{B}$. For every section $s \in \mathcal{B}$, $M_s = f^{-1}(s)$ is a $7$-manifold with a closed $G_2$-structure $\varphi_s$ and a generically coassociative ALE fibration $\pi_s := w|_{M_s} : M_s \to Q$ deforming $f^{-1}(\mf{0}) = (M_0,\varphi_0)$. Hence one should think of $\mathcal{B}$ as the local moduli space of deformations of $(M_0,\varphi_0)$ as a coassociative fibration over $Q$.

\subsection{Harmonic Metrics and the Adiabatic Limit} \label{aftersection}

So far, the geometric picture we have described is independent of the harmonic metric equation (also known as the D-term):

\beq d_\nabla^{\dagger_g} \theta = 0 \label{ugabuga} \eeq

Indeed, the deformations $\varphi_\theta$ of the closed $G_2$-structure $\varphi_0$ on $M_0$ constructed in the previous sections are parametrized by spectral/cameral covers $s_\theta$ associated to commuting solutions $(\nabla,\theta)$ to the F-term equations:

\[ F_\nabla = [\theta\wedge\theta] = 0 \]
\beq d_\nabla\theta=0 \label{opamermao}\eeq

We will show that equation \ref{ugabuga} is related to torsion-free deformations, i.e., those satisfying $d*_{\varphi_\theta} \varphi_\theta=0$ More precisely, we will show the following:

\begin{theorem}{(Theorem $4.9$, Part $3$)} Let $(M_0,\varphi_0)$ a coassociative ADE fibration and $(\nabla, \theta)$ a solution of \ref{opamermao} taking values in the McKay bundle $\mathcal{P}_G$. Let $c : \widetilde{Q}_\theta \to Q$ be its cameral cover and $(M_\theta,\varphi_\theta)$ the corresponding deformation of $(M_0,\varphi_0)$. 

Then a solution to the harmonic metric condition $d_\nabla^{\dagger_g} \theta = 0$ is equivalent to a solution to the \emph{adiabatic limit of the torsion-free condition} \cite{donaldson}: 

\beq \text{MC}(h_\theta)=0 \label{maxim}\eeq
where $h_\theta : Q \to \widehat{\mathcal{H}}_\theta$ is an affine section such that $\tau\circ \theta=dh_\theta$, and $\text{MC}$ stands for mean curvature. \label{teoremasso}
\end{theorem}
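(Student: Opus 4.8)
The plan is to reduce the statement to a purely fiberwise computation, following Donaldson's adiabatic analysis but transported through the cameral-cover picture. First I would recall the structure set up in Corollary~\ref{lemamaiscabul}: we have $\varphi_\theta = \eta_\theta + \mu_\theta$ with $\mu_\theta = \pi_\theta^*v_Q$ (a pullback volume form, by the last part of Proposition~\ref{donaldao4}), and $*_{\varphi_\theta}\varphi_\theta = *\eta_\theta + *\mu_\theta$, where $*\eta_\theta \in \Omega^2(M_\theta^\circ/Q)\otimes\Lambda^2\mf{H}_\theta^*$ and $*\mu_\theta$ is the fiberwise volume form. By Proposition~\ref{donaldao3}, torsion-freeness of $\varphi_\theta$ is the system $d_{\mf{H}_\theta}(*\mu_\theta)=0$, $d_f(*\eta_\theta) = -F_{\mf{H}_\theta}(*\mu_\theta)$, $d_{\mf{H}_\theta}(*\eta_\theta)=0$; the adiabatic limit $\epsilon\to 0$ of this (Proposition~\ref{donaldao4}, equations \ref{torcaonula}) collapses the first two equations automatically in our hyperk\"ahler ALE setting — as remarked after Proposition~\ref{donaldao4}, the $L^2$-Hodge theory of ALE spaces (Mazzeo--et al.) forces $d_{\mf{H}_\theta}(*\eta_\theta)=0$ to be the only surviving constraint, and it is precisely the statement that $h_\theta$ has vanishing mean curvature with respect to the cup-product form on $\widehat{\mathcal{H}}_\theta$. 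So the content of the theorem is the equivalence $d_{\mf{H}_\theta}(*\eta_\theta)=0 \iff d_\nabla^{\dagger_g}\theta = 0$.

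Next I would make this equivalence explicit. Work on the cameral cover $\widetilde{Q}_\theta$, where by \ref{important} the pullback $c^*\theta$ is given locally by $r$ ordinary closed $1$-forms $\theta_1,\dots,\theta_r$ and $c^*\eta_\theta = \sum_{i=1}^3 (\omega_\theta)_i\, dx_i$ with $[(\omega_\theta)_i]$ determined by $\tau\circ\theta_a$ through the McKay identification $t^*\mathfrak{H}\cong\mathcal{H}$. The key observation is that under $\eta_0 : TQ \xrightarrow{\cong} \Lambda^{2,+}_\nabla$ and the McKay dictionary $\mathfrak{h}\cong H^2(S,\R)$, the Hodge star $*_{\varphi_\theta}$ acting on $*\eta_\theta$ corresponds, fiberwise, to the hyperk\"ahler $*$ on the ALE fiber, which on the span of $[(\omega_\theta)_1],[(\omega_\theta)_2],[(\omega_\theta)_3]$ acts by the Gram matrix $G_{ab}=\langle\theta_a,\theta_b\rangle$ of the induced metric on $Q$ — exactly the intersection form of Donaldson. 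Then $d_{\mf{H}_\theta}(*\eta_\theta)=0$ becomes a first-order PDE on $Q$ for the map $x\mapsto(\theta_1(x),\dots,\theta_r(x))$ that, after unwinding, is $\sum_i \partial_i\big(G^{ij}(\text{stuff})\big)=0$; I would identify the left-hand side with $d_\nabla^{\dagger_g}\theta = *d_\nabla * \theta$. The cleanest route is: since $d_\nabla\theta=0$, the $3$-form $d_\nabla(*\eta_\theta)$ is built from $d_\nabla(*\theta)$ by the same linear-algebra correspondence that builds $\eta_\theta$ from $\theta$ and $*\eta_\theta$ from $*\theta$; because that correspondence (Torelli + McKay + $\eta_0$) is a fiberwise linear isomorphism onto its image, $d_\nabla(*\eta_\theta)=0 \iff d_\nabla(*\theta)=0 \iff d_\nabla^{\dagger_g}\theta=0$. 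Descending back from $\widetilde{Q}_\theta$ to $Q$ is automatic since everything is $W$-equivariant.

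Finally I would translate $d_{\mf{H}_\theta}(*\eta_\theta)=0$ into the mean-curvature statement $\mr{MC}(h_\theta)=0$: by construction $[\eta_\theta]=dh_\theta$ for the affine section $h_\theta$, so $[*\eta_\theta] = *\,dh_\theta$ where $*$ is the cup-product fiberwise Hodge star on $\widehat{\mathcal{H}}_\theta$; the cohomological shadow of $d_{\mf{H}_\theta}(*\eta_\theta)=0$ is then precisely that $h_\theta(Q)$ is a stationary (minimal) submanifold of the flat affine bundle with respect to the intersection form — this is literally the last line of Proposition~\ref{donaldao4}, and $\mr{MC}$ is its mean curvature. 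I expect the main obstacle to be the bookkeeping in the second step: one must check carefully that the fiberwise linear correspondence $\theta \rightsquigarrow \eta_\theta$, $*\theta\rightsquigarrow *\eta_\theta$ genuinely intertwines the codifferential $d_\nabla^{\dagger_g}$ on $Q$ with $d_{\mf{H}_\theta}$ composed with the fiberwise Hodge star — in particular that the metric on $Q$ induced by $\eta_0$ is the same metric whose Gram matrix realizes the fiberwise hyperk\"ahler $*$, so that no spurious conformal factor appears — and that the ALE decay hypotheses (used already in \ref{donaldao2}) are strong enough that the harmonic-forms obstruction to promoting a closed-to-torsion-free $G_2$-structure is exactly the failure of $d_{\mf{H}_\theta}(*\eta_\theta)=0$ and nothing more.
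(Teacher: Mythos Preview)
Your route through the Donaldson form $\eta_\theta$ and the equation $d_{\mf{H}_\theta}(*\eta_\theta)=0$ is substantially more elaborate than the paper's argument, and it introduces two technical issues that the paper's direct approach sidesteps entirely.

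The paper never touches $\eta_\theta$ or the connection $\mf{H}_\theta$ in this proof. It works purely on the cameral cover: pulling $\theta$ back to $\widetilde{Q}_\theta$ gives $r$ ordinary closed one-forms $\theta_1^V,\dots,\theta_r^V$ on each chart $V$; the D-term says each is co-closed, so $\theta_i^V = dh_i^V$ with $h_i^V$ harmonic. One then invokes the classical fact that an immersion into a flat space whose coordinate components are harmonic has minimal image, giving $\mr{MC}(h_\theta)=0$ after descending to $Q$. The converse runs backward. That is the whole argument.

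Your detour, by contrast, leans on two steps that are not secured. First, the last clause of Proposition~\ref{donaldao4} (the equivalence of $d_\mf{H}(*\eta)=0$ with the stationary-submanifold condition) is stated only for \emph{compact} fibers, and the remark you cite immediately after it is a \emph{caution} about the ALE case --- it says the connection cannot be modified by $L^2$-harmonic functions to achieve $d_\mf{H}(*\eta)=0$ --- not an assertion that the equivalence survives. Second, your claim that the correspondence $\theta \rightsquigarrow \eta_\theta$, $*\theta \rightsquigarrow *\eta_\theta$ intertwines $d_\nabla$ with $d_{\mf{H}_\theta}$ is not obvious at the level of forms: the passage from $\theta$ to $\eta_\theta$ involves choosing fiberwise hyperk\"ahler representatives of cohomology classes, which is not a pointwise linear map, so commutation with the horizontal differential on $M_\theta$ is a genuine question (and one you yourself flag as the ``main obstacle''). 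The paper avoids both by staying on $Q$ and arguing directly with the affine section $h_\theta$, never lifting to the seven-manifold at all.
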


\begin{remark} In the setup of \cite{donaldson}, the section $h_\theta$ takes values in $H^2(K3,\R)$, where the intersection product is indefinite. In that case, $h_\theta$ must also be positive, a condition that is void in the ALE setup. Another distinction is that, for the same reason, a solution to \ref{maxim} will be a minimal, rather than maximal, submanifold. 
\end{remark}







\begin{proof}




Suppose $g$ is a metric on $Ad(\mathcal{P}_G)$ such that $d_\nabla^{\dagger_g}\theta=0$. Then $c^*\theta \in \Omega^1(\widetilde{Q}_\theta,\mathfrak{H})$ so on an open set $V \in \mathfrak{V}$ we can write:

\[  \theta^V := c^*\theta|_V = (\theta_1^V,\ldots,\theta_r^V) \]
where each $1$-form $\theta_i^V \in \Omega^1(V)$ satisfies:

\beq d^{\dagger_g}\theta_i^V=0 \label{ubah}\eeq
with respect to the induced metric (also denoted $g$) on $\mathfrak{H} \subset Ad(\mathcal{P}_G)$.

Since we also have $d\theta_i^V=0$, we can write

\[ \theta_i^V = dh_i^V \]
for a function $h_i^V : V \to \R$. It follows from \ref{ubah} that $h_i^V$ is harmonic, so

\[ h^V = (h_1^V,\ldots,h_r^V) \in C^\infty(V,\mathfrak{H}) \]
is a $g$-harmonic function. Recalling that the McKay correspondence identifies $c_s^*\mathfrak{H} \cong \widehat{\mathcal{H}}_\theta$, projecting to $U = c(V)$ we get a $g$-harmonic affine function:

\[ h^U : U \to \widehat{\mathcal{H}}_\theta \]

Since $h_i^V = \pi_i|_{Im(h^V)}$ are restrictions of linear maps, $Im(h^V)$ is a minimal submanifold with respect to $g$, and so is $Im(h^U)$.

The converse is obtained by running the argument backwards.


\end{proof}

\section{Duality and Character Varieties} \label{duality}

The goal of this section is to prove the duality explicitly for the Hantzsche-Wendt $G_2$-orbifold $N_0 := \C^2/\Z_2\times_\K \G_6$ (c.f. Example \ref{exemplodobob}) and also verify that Corollary \ref{corolarao} computes the correct moduli space of $G_2$-structures for this example. Before we proceed to the calculation, we briefly explain the content of M-theory/IIA duality for this specific geometric context. 

\subsection{M-theory/IIA duality}

Assume $M$ is a $G_2$-orbifold containing a three-manifold of $A_n$ singularities\footnote{The description here only applies to singularities of type A. The type D case is slightly subtler \cite{sen} and the E case is unknown to the author.}, and that $\widetilde{M}$ is a desingularization of $M$ also with $G_2$-holonomy. M-theory compactified on $\widetilde{M}$ contains certain objects called ``$M2$-branes'' that can wrap the exceptional two-cycles of $\widetilde{M}$. The mass of a $M2$-brane is proportional to its area, hence in the limit where we blow down all cycles we get $n$ massless $M2$-branes located at the singularities of $M$.

Now assume $X$ is a Calabi-Yau threefold. In Type IIA superstrings compactified on $X$ there are objects called ``$D6$-branes'' that can wrap calibrated $3$-cycles, i.e. special Lagrangian submanifolds. These cycles come equipped with complex line bundles with a $U(1)$-connection on it. Given a configuration of $n$ $D6$-branes, one can have strings stretched between any two of them. Now the massless limit is obtained by smashing all $D6$-branes together, since the mass of a string is proportional to its length. This limit is described by a $SU(n)$-connection on a vector bundle over a special Langrangian submanifold.

To explain how $X$ is related to $M$, we note that the duality we aim to describe still holds after we take a ``weakly-coupled limit'' of the two theories. The limits are supersymmetric theories: 11-dimensional supergravity for M-theory and 10-dimensional supergravity (with branes) for type IIA. These two theories are related by dimensional reduction on a circle. Moreover, the supersymmetry condition determines the geometric structures these theories classify: they are determined by dimensionally reducing the equation for a 11 or 10 dimensional parallel spinor down to $7$ or $6$ dimensions, respectively. In the first case, one obtains stationary points of a $7$-dimensional analogue of the Chern-Simons functional $CS(\varphi_\C) := \int_M \varphi_\C \wedge d\varphi_\C$, which are exactly the integrable complexified $G_2$-structures. In the second case, one obtains the Hermitian-Yang-Mills equations.

This discussion motivates the following: suppose $M$ is a $G_2$-orbifold of type $A_n$ endowed with a $U(1)$-action by isometries with fixed set $Q \subset M$. The Calabi-Yau space $X$ is called a \emph{IIA dual} for $M$ if it satisfies the following \cite{ach} \cite{atiyahwitten}:
													
													\begin{enumerate}
													  \item $X := M/U(1)$ as topological spaces.
													  
													  \item The complex structure $J$ on $X$ has a real structure such that $Q/U(1) \cong Q$ is a totally real special Lagrangian submanifold.
													  
													  \item There are $n$ $D6$-branes ``wrapping'' $Q \subset X$.
													  
													  \end{enumerate}

													The IIA moduli space parametrizes the following objects:
													
													\begin{enumerate}
													\item Complex structures on $X$ in which $Q$ is totally real.
													\item Complexified K\"ahler structures on $X$
													\item A supersymmetric configuration of $n$ $D6$-branes wrapped on $Q$.
													\end{enumerate}
													 
Up to now the discussion has been general; now we focus on our main example. Let $N_0 = \C^2/\Z_2 \times_\K \G_6$ be the Hantzsche-Wendt $G_2$-orbifold. Its IIA dual is $X = T^*Q$ endowed with a rank $n$ vector bundle $E \to X$ and a Hermitian Yang-Mills connection on $E$. With the metric on $Q$ fixed, there is a unique complex structure on $T^*Q$ under which $Q$ is totally real. This is the complex structure that makes the semi-flat metric on $T^*Q$ a Calabi-Yau metric.

The Hermitian-Yang-Mills condition describes generic configurations of $n$ $D6$-branes. The massless limit where they wrap the special Lagrangian $Q$ is obtained by dimensional reduction of the HYM equations along the leaves of $T^*Q \to Q$. A quick computations shows that the result consists exactly of equations \ref{pwequations}.

\subsection{The Duality for the Hantzsche-Wendt \texorpdfstring{G\textsubscript{2}} --Orbifold} \label{exemploexemplar}
				          													
According to the previous discussion, the IIA moduli space is computed by solutions to \ref{pwequations}, i.e., by the character variety $\text{Char}(\pi,SL_2(\C))$ where $\pi = \pi_1(\G_6)$ is the Hantzsche-Wendt group. Recall that we have the exact sequence $1 \to \Z^3 \to \pi \to \K \to 1$, with $\K \cong \Z_2\times \Z_2$. From section \ref{charvarbieber}, we know that the character variety of the three-torus $\T$ is given by:
											          													
											          													\beq \text{Char}(\Z^3,SL(n,\C)) \cong (\C^*)^{3n-3}/\Sigma_n = \prod_{i=1}^3\big( (\C^*)^{n-1}/\Sigma_n \big) \label{galileoo} \eeq
											          													where $\Sigma_n$ acts by permutations on $(\C^*)^{n-1} \cong \left\{ z_1z_2\ldots z_n=1 \right\} \subset (\C^*)^n$.
											          													
											          													In section \ref{charvarbieber}, we determined that there is a map $r : \text{Char}(\pi,SL(n,\C)) \to \text{Char}(\Z^3,SL(n,\C))$. Moreover, there is a $\K$-action on this last space, given in terms of the presentation \ref{galileoo} by 
											          													
											          													\beq (i,j)[(z_1,z_2,z_3)] = [(z_1^i,z_2^j,z_3^{ij})] \label{ghty} \eeq
											          													where $i,j \in \left\{ \pm 1 \right\}$.
											          													
											          													The main result in section \ref{charvarbieber} was that $Im(r)$ is contained in $\text{Fix}(\K)$ (and equal under an additional condition). The image determines $\text{Char}(\pi,SL(n,\C))$ possibly up to a finite cover given by non-trivial representations of $\K$ mapping to the same element of $\Hom(\pi,SL(n,\C))$.
											          													
											          													When $n=2$ it is easy to describe $\text{Fix}(\K)$. Let $\alpha = (1,-1)$. Then:
											          													
											          													\[ \emph{Fix}(\alpha) = [(\pm 1, \C^*, \C^*)] \cup [(\C^*,\pm 1,\pm 1)] = (\C^2)_{z_2,z_3} \cup \C_{z_1}  \]
											          													Notice that the first factor is contributed by $-1 \in \Z_2$ and the second by $1 \in \Z_2$.
											          													
											          													We can play a similar game for the other two non-trivial elements of $\K$. Hence:
											          													
											          													\beq \text{Fix}(\K) = \bigcap_{(i,j,k) \in \langle (1,2,3) \rangle } \big( (\C^2)_{z_i,z_j}\cup\C_{z_k} \big) = \C_{z_1}\cup\C_{z_2}\cup\C_{z_3} \label{buquezao} \eeq
											          													
											          													Thus $\text{Fix}(\K)$ is a bouquet of three complex lines touching at a point, which we schematically denote by $\mf{Y}_\C$. The image $Im(r)$ can be computed directly from a presentation of $\pi$ to show that $r$ is in fact surjective onto $\text{Fix}(\K)$; essentially, this is because an element in the bouquet, say $(a,0,0)$ is a representation which is non-trivial only at a single generator, so will be automatically a representation of $\pi$. This determines one connected component of the character variety:
											          													
											          													
											          															\beq \text{Char}^0(\pi,SL(2,\C)) \cong \mf{Y}_\C := \C_{z_1}\cup\C_{z_2}\cup\C_{z_3} \label{finntroll} \eeq

											   Up to conjugation, there are also 3 other representations of $\K$ that map to $Ker(\overline{r})$. These correspond to rigid representations, and include the trivial representation of $\pi$.
											          													
											          													Now, the duality predicts that this character variety can be computed as the moduli space of complexified $G_2$-structures $\moduli_{G_2}^\C$ on $N_0=\C^2/\Z_2 \times_\K \G_6$. As mentioned in the introduction, the space $\moduli_{G_2}$ for this example was computed by Joyce \cite{joyce}. The smoothings of the singularity $\C^2/\Z_2$ are obtained either via resolution (blow-up) or deformation. For the full $M_0$, one needs to smooth the fibers consistently with the $\K$-action, i.e., the $\K$-action must lift to an action on the smooth fibers that is asymptotic to the original action. With these constraints, there are three families of smoothings: one family of resolutions, given by: 
					
					\beq Y_1 := \widetilde{\C^2/\Z_2} \times_\K \G_6 \eeq						          											
and two families of deformations, given by 
											          													
											          													  	\[  Y_2 := \left\{ (z_1,z_2,z_3,\epsilon) ; z_1^2+z_2^2+z_3^2=\epsilon \right\} \subset \C^3\times \R^+ \]
											          														\beq  Y_3 := \left\{ (z_1,z_2,z_3,\epsilon) ; z_1^2+z_2^2+z_3^2=-\epsilon \right\} \subset \C^3\times \R^+ \eeq

The resolved family is parametrized by the volume of the blown-up $\p^1$, and the two deformation families $Y_2$, $Y_3$ are parametrized by the volumes of the totally real $\sph^2 \subset Y_2$ and the totally imaginary $\sph^2 \subset Y_3$, respectively. The intuition here is that (say, in the resolved case) once we have resolved one fiber $\C^2/\Z_2$ we are free to choose a K\"ahler class in $\mathfrak{u}(1)$ up to scaling, and once that is chosen flatness of the vertical section $\ul{\omega}$ fixes the volume in all other fibers.

Therefore the moduli space of smooth $G_2$-structures is:

\beq \moduli_{G_2} = \mf{Y}_\R := \R^+_{x_1} \cup \R^+_{x_2} \cup \R^+_{x_3} \eeq
i.e., it consists of three copies of $\R^+$ touching at the origin. We recall that the complexified space $\moduli_{G_2}^\C$ is, away from the discriminant locus, a K\"ahler space admitting a Lagrangian torus fibration over $\moduli_{G_2}$. The torus fibers parametrize holonomies of the ``$C$-field'', i.e., elements in $H^3(M,U(1))$. So topologically, $ \moduli_{G_2}^\C = \mf{Y}_\C$. We have proved:

\begin{theorem} \emph{(M-theory/IIA duality for $N_0$)}: The moduli space of complexified $G_2$-structures on $N_0$ is isomorphic to the non-trivial component of the $SL(2,\C)$ character variety of $\G_6$:

\beq \moduli_{G_2}^\C \cong  \text{Char}^0(\pi,SL(2,\C)) \eeq

						\end{theorem}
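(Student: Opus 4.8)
The plan is to compute $\moduli_{G_2}^\C(N_0)$ directly from the (complexified) deformation theory of Theorem \ref{teoremito}, and then match the answer with $\text{Char}^0(\pi,SL(2,\C))$, which has already been identified with the bouquet $\mf{Y}_\C = \C_{z_1}\cup\C_{z_2}\cup\C_{z_3}$ in \ref{finntroll}. Concretely, I would first set up the ambient data of Theorem \ref{teoremito} for $Q = \G_6$, $\Gamma = \Z_2$: here $\mathfrak{g} = \mathfrak{su}(2)$, $\mathfrak{h}_c = \C$, $W = \Z_2$ acting by $\pm 1$, so $\mathfrak{h}_c/W \cong \C$, and the bundles $\Lambda^{2,+}_\nabla \cong T^*\G_6$, $\mathfrak{H}_c \to \G_6$ and $\mathcal{E}_c = \sph\Lambda^{2,+}_\nabla\times_{\mathfrak{H}_c/W}\mathfrak{H}_c$ are all determined by the flat $\K$-cocycle of $\mf{V}_0$ from Example \ref{exemplodobob}, whose holonomy acts on a flat frame of $T^*\G_6$ by the sign matrices $A,B$ of \ref{hwmatrizes} and correspondingly on the hyperk\"ahler triple by \ref{abcde}.

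The key step is then to enumerate the closed cameral sections valued in $\mathfrak{H}_c/W$ over $\G_6$. Because the F-term forces the underlying Higgs data to be flat ($d_\nabla\theta = 0$ with $\nabla$ flat), such a section is covariantly constant; writing it locally as $\theta = \sum_{i=1}^3 \theta_i\, dx_i$ with $\theta_i \in \mathfrak{h}_c$, the gluing condition on overlaps, dictated by $A,B$ acting simultaneously on the $dx_i$ and (through the $\Lambda^{2,+}$-monodromy) on the $\theta_i$, should admit exactly three families of nonzero solutions --- the ``monomial'' ones $\theta = \theta_i\, dx_i$ for $i=1,2,3$ --- each parametrized by $\theta_i \in \mathfrak{h}_c/W \cong \C$. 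These three families meet precisely at $\theta = 0$, i.e. at the central fiber $N_0$, so the moduli space of closed cameral sections is the bouquet $\C_{\theta_1}\cup\C_{\theta_2}\cup\C_{\theta_3}$. Matching with Joyce's description, the three branches are exactly his three smoothings $Y_1,Y_2,Y_3$ of $\C^2/\Z_2$ (the three choices of exceptional-sphere direction), and the real slice $\{\theta_i\in\R^+\}$ recovers $\moduli_{G_2} = \mf{Y}_\R$ together with the torus (here $U(1)$) fibration over it predicted by the M-theory $C$-field; this last point I would record as the consistency check alluded to in the text. Finally, since closed cameral sections are, by Theorem \ref{teoremito}, in bijection with the nearby closed $G_2$-structures on coassociative ALE fibrations deforming $(N_0,\varphi_0)$ --- i.e. with $\moduli_{G_2}^\C$ --- we obtain $\moduli_{G_2}^\C \cong \C_{\theta_1}\cup\C_{\theta_2}\cup\C_{\theta_3} = \mf{Y}_\C$, which by \ref{finntroll} equals $\text{Char}^0(\pi,SL(2,\C))$, as claimed.

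The main obstacle will be the bookkeeping in the middle step: one has to pin down the flat bundle $\mathfrak{H}_c \to \G_6$ (equivalently the induced $\K$-action on $\mathfrak{h}_c = \C$) precisely from the $\mf{V}_0$-cocycle, and then verify that, together with the sign action on $(dx_1,dx_2,dx_3)$, the covariantly-constant closed cameral sections are exactly the three one-parameter ``monomial'' families and that these glue into three affine lines joined transversally at $0$ --- in particular that no ``mixed'' flat section $\theta_i dx_i + \theta_j dx_j$ survives the gluing. A secondary point is to check that the identification $\moduli_{G_2}^\C \cong \mf{Y}_\C$ is one of analytic (not merely topological) spaces, but since both sides are manifestly bouquets of three affine lines meeting at a single point that is smooth on each branch, this is immediate once the parametrizations are matched. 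One should also note, as the text does, that the remaining components of $\text{Char}(\pi,SL(2,\C))$ --- the three rigid points coming from nontrivial $\K$-representations in $\mathrm{Ker}(\ol r)$ --- are not seen by this deformation-theoretic count and are conjecturally dual to three separate rigid $G_2$-orbifolds.
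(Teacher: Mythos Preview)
Your approach is essentially the paper's own, with the order of the two computations swapped. The paper first obtains $\moduli_{G_2}=\mf{Y}_\R$ from Joyce's explicit list of three $\K$-compatible smoothings $Y_1,Y_2,Y_3$, then passes to $\moduli_{G_2}^\C=\mf{Y}_\C$ via the $C$-field torus fibration, and only \emph{after} stating the theorem verifies that the flat-cameral-section description $\Gamma_{\text{flat}}(\G_6,T^*\G_6\otimes\mathfrak{u}(1)/\Z_2)$ reproduces $\mf{Y}_\R$ by the same $\text{Fix}(\K)$ computation. You instead take the cameral-section route as primary and cite Joyce as a consistency check; both lead to the same matching with \ref{finntroll}.

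One small correction to your bookkeeping: the $\K$-monodromy on $\mathfrak{H}_c$ is \emph{trivial} here (the $\mf{V}_0$-cocycle lands in $SO(4)$, and by the construction in Lemma \ref{bundleball} the induced action on $\mathfrak{h}_c$ is via the determinant), so there is no separate ``$\Lambda^{2,+}$-monodromy on the $\theta_i$'' as you wrote. The correct statement --- and the one the paper uses --- is that a flat section of $T^*\G_6\otimes\mathfrak{u}(1)/\Z_2$ is a constant one-form $\sum_i\theta_i\,dx_i$ invariant under $A,B$ \emph{up to the Weyl sign}, i.e.\ a simultaneous $\pm 1$-eigenvector of the matrices in \ref{hwmatrizes}. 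This is precisely what singles out the three monomial families and kills the mixed ones, and it is the real analogue of the multiplicative $\text{Fix}(\K)$ computation \ref{ghty}--\ref{buquezao} you already accept on the character-variety side.
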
					          													
											          													
Notice that the character variety description by itself does not tell us much about the $G_2$-structures per se. However, from the construction of our deformation family, we must have:											          													
\beq \moduli_{G_2} = \Gamma_{\text{flat}}(\G_6,T^*\G_6\otimes \mathfrak{u}(1)/\Z_2) \eeq
											          															
These parametrize $\Z_2$-equivalence classes of flat sections of $T^*Q$. Flat sections are those that are fixed by the monodromy group $\K$. Recall that the action of $\K=\langle \alpha, \beta \rangle$ on $T^*Q$ is given in local coordinates by: 

\[ \alpha(dx_1,dx_2,dx_3) = (-dx_1,-dx_2,dx_3) \]

\[ \beta(dx_1,dx_2,dx_3) = (-dx_1,dx_2,-dx_3) \]

Because of this, the computation of the fixed set here is completely analogous to the one performed above for the character variety (compare formula \ref{ghty}), the only differences being that the variables are now real and the action is additive rather than multiplicative. Hence, the fixed set is given by $\mf{Y}_\R$, in agreement with the result of Joyce.

In fact, the $G_2$-structures on the smoothings constructed by Joyce are not only closed but also \emph{torsion-free}. Thus they have metrics with $G_2$-holonomy. The techniques developed in this work do not attack the torsion-free condition, but presumably the condition can be reformulated as an analytic condition on the spectral covers. At least in the adiabatic limit proposed by Donaldson \cite{donaldson}, the relevant condition is that the spectral cover must be stationary under mean curvature flow. In this restricted context of flat three-manifolds, the solutions are exactly the flat sections of $\mathcal{E}_W$.  The importance of the adiabatic limit is that torsion-free solutions are supposed to be constructed as formal power series extending an adiabatic solution. So we propose that this could be reformulated in our context via appropriate perturbations of flat spectral covers.
										          													
										          														\begin{remark}
										          															We mentioned before that there are other connected components of $\text{Char}(\pi,SL(2,\C))$ given by three isolated points. Under duality, presumably these three points correspond to rigid $G_2$-structures on $N_0$, i.e. that admit no smoothings - or at least no smoothings preserving the structure of a coassociative fibration.
										          														\end{remark}
											          										
Now, the beauty of the character variety description is that it allows us to generalize the computation of the moduli space to higher rank groups. E.g., suppose now we would like to investigate $G_2$-structures on $\C^2/\Z_n \times_{D_8} \G_6$. Then we would need a generalization of \ref{buquezao} characterizing $Im(r) = \text{Fix}(\K)$. We conjecture the following formula holds for $\text{Fix}(\K)$:

											          													\beq \text{Fix}(\K) \cong \bigcup_{K,K' \leq\K} \bigg( \big(\bigslant{T^3}{K}\big)^{\Sigma_n} \cap \big(\bigslant{T^3}{K'}\big)^{\Sigma_n} \bigg) \label{buruguzinha} \eeq
											          													where $\Sigma_n$ is the permutation group and the union is over distinct proper subgroups of $\K$. 
											          													
											          													For $n=2$ a simple calculation shows that the intersecting factors are perpendicular $\C^2$'s, resulting in an union of three $\C$'s as in formula \ref{buquezao}. Hence, formula \ref{buruguzinha} is correct for $n=2$. Computations by the author suggest that $\text{Char}^0(\pi,SL(3,\C))$ is also given by a trivalent vertex, with components not $\C$ but the reduced scheme associated to a rather complicated scheme. In our forthcoming work \cite{barb} we will address this computation from the perspective of Hilbert scheme of points on the SYZ mirror of $X$.
											          													


										\section{Spectral Correspondence} \label{spectrality}
										
													In this section we prove Theorem \ref{flatspec}, a spectral correspondence for harmonic Higgs bundles $(E, k, \nabla, \theta)$ with structure group $G_c = GL(n,\C)$ satisfying the following conditions: 
													\begin{enumerate}
												\item $[\theta\wedge\theta]=0$
												\item The spectral cover $S_\theta$ in  the fundamental representation of $GL(n,\C)$ is either \emph{unramified} or \emph{totally ramified}.
													\end{enumerate}

												We start with the linear theory. Let $V$ be a complex vector space of dimension $n$ and $\phi \in End(V)$. When $\phi$ is diagonalizable, it can be reconstructed from its eigenvalues
												\[ \ul{\lambda} = \left\{ \lambda_1, \ldots \lambda_n \right\} \] 
												the decomposition of $V$ into $\phi$-eigenlines 
												\[ V = L_1 \oplus \ldots \oplus L_n \] 
												and a surjective function $m: \ul{L} \to \ul{\lambda}$. We refer to $(\underline{\lambda}, \underline{L},m)$ as the spectral data associated to $(V,\phi)$.
													
		 Let $p_i(\phi)$ be the coefficient of $\lambda^{n-i}$ in the expansion of $\det(\lambda\mf{1}-\phi) \in \C[t]$. Consider the map:
													
													\begin{alignat}{2} f : &    End(V)    \parbox{1.6cm}{\rightarrowfill}      \C^n \\
																	  		&    \phantom{\hspace{.5cm}}   \phi      \longmapsto           (p_1(\phi),\ldots,p_n(\phi))
													\end{alignat}
													
													Then it is clear that the eigenvalues $\left\{ \lambda_1, \ldots \lambda_n \right\}$ of $\phi$ depends only on $h(\phi)$. The map $f$ is a prototype of the Hitchin map \ref{hit} defined below.
													
													Now, let $Q$ be a manifold, and $E \to Q$ a rank $n$ complex vector bundle. Suppose $\phi \in \Gamma(Q,End(E))$. Then to each $\phi_q : E_q \to E_q$ we can associate its spectral data $(\underline{\lambda}_q,\underline{L}_q)$. We think of $\phi$ as a ``twisted family'' of endomorphisms parametrized by $Q$.
													
												As $(\underline{\lambda}_q,\underline{L}_q)$ varies over $Q$, it defines: 
													
														\begin{itemize}
															\item a (possibly singular) subspace of $Q\times \C$:
													
																	\begin{align*} \widetilde{Q} & = \left\{ (q,\lambda) ; \lambda \text{ is an eigenvalue of } \phi_q \right\} \\
																												& = \left\{ (q,\lambda) ; \det(\lambda \mf{1}_{E_q} - \phi_q) = 0 \right\} 
																\end{align*}
																	called the \emph{spectral cover} of $Q$ associated to $\phi$. It comes equipped with a generically $n : 1$ covering map $\pi : \widetilde{Q} \to Q$. 
																	
																	\begin{itemize}
																		\item If $\phi$ is \emph{generic} - i.e., diagonalizable with distinct eigenvalues at every point - then $\pi$ is unramified. 
																	
																		\item If $\phi$ has repeated eigenvalues, then its \emph{ramification locus} is given by:
																	
																				\[ \Delta_\pi = \{ q \in Q |  \phi_q \text{ has a multiple eigenvalue} \} \]
																	\end{itemize}
															
														\item A \emph{spectral line bundle}:
															
															\beq m : \mathcal{L} \to \widetilde{Q}  \eeq
																defined as follows: consider the matching maps $m_q : (L_q)_i \mapsto (\lambda_q)_i$. Then $\mathcal{L} = \sqcup_{q \in Q, i} (L_q)_i$ and $m|_{L_q} = m_q$.
														\end{itemize}
														
 Conversely, in nice cases (e.g. if $Q$ is an algebraic variety and $\phi$ is regular, i.e., has one Jordan block per eigenvalue), then given $(\til{Q},\mathcal{L})$, Higgs data can be recovered by $E = \pi_*\mathcal{L}$ and $\phi = \pi_*\tau$, where the tautological section $\tau : \til{Q} \to End(\mathcal{L})$ is defined as $\tau(q,\lambda) = \lambda\mf{1}_\mathcal{L}$.

													\begin{remark}
													\begin{enumerate}
\item														If $\phi$ is  irregular - i.e., has multiple Jordan blocks per eigenvalue - then the pushforward of the spectral line bundle by $m$ will not recover $E$: one needs to consider a more general sheaf $\mathcal{L}' \to \widetilde{Q}$ such that on the locus $\Delta_\phi \subseteq Q$ where $\phi_q$ is irregular, $\mathcal{L}'_{q,\lambda_q}$ jumps in rank due to the presence of multiple eigenlines for the same eigenvalue. Such a locus is codimension two in $Q$. In particular, when $Q$ is a $3$-manifold, $\Delta_\phi$ is a graph in $Q$. We will have more to say about this in the next section.

\item If one is not working with algebraic spaces (as is our case in this paper), then it is important to be careful with the pushforwards. This is the main reason we will consider only unramified or totally ramified Higgs bundles. Fortunately, this will suffice for the applications in \cite{barb}. However, as we explain in the next section, partial ramification is expected to play a fundamental role in $G_2$-geometry.
\end{enumerate}
													\end{remark}
													
													This is the rawest form of the spectral correspondence. One can also consider more general notions of Higgs bundles: one can ``twist'' $\phi$ by requiring its coefficients to be valued in a sheaf of commutative groups $\mathcal{F}$ over $Q$, and also require $\phi$ to satisfy some constraint (e.g., being compatible with fixed geometric structures on $Q$ or $E$). In this situation, the spectral data must be suplemented with additional structure in order to reconstruct $(E,\phi)$. We will be interested in the case of harmonic Higgs bundles over a $3$-manifold $Q$, i.e., $\mathcal{F}$ is the sheaf $\Omega^1_Q$ and we impose equations \ref{pwequations} together with $[\theta\wedge\theta]=0$.

													\begin{definition} Let $(E,k,\nabla,\theta)$ be a harmonic Higgs bundle over $Q$. The \emph{spectral cover} associated to $\theta$ is the space $S_\theta \subset T^*Q$ defined by:
													
															\beq S_\theta = \left\{ (q,s_q) ; \det( s_q \otimes \mf{1}_{E_q} - \theta_q ) = 0 \right\} \eeq
												\end{definition}
													
													\begin{definition} \emph{Spectral Data - unramified case:} Let $(E,k,A, \theta,\ell)$ be a rank $n$ harmonic Higgs bundle over a compact Riemannian $3$-manifold $Q$ with Levi-Civita connection $\delta$. Assume $\theta$ is generic. We define \emph{spectral data} to be:
																				\begin{enumerate}
																				\item A $n$-sheeted, unramified covering map $\pi: S_\theta \to Q$ given by the characteristic polynomial of $\theta$.
																			\item A line bundle $\mathcal{L} \to S_\theta$ determined by the eigenlines of $\theta_q$
																				\item A hermitian metric $\widetilde{k}$ on $\mathcal{L}$ determined by $k$
																				\item A hermitian flat connection $\til{A}$ on $\mathcal{L}$ determined by $A$
																				\item A Lagrangian embedding $\ell : S_\theta \to T^*Q$
																				\end{enumerate}
													\end{definition}
													
													
											\begin{definition} \emph{Spectral Data - totally ramified case:} With the same notation as before, suppose $\theta$ is central - i.e., diagonalizable with all eigenvalues equal. Then its spectral data is as before, except that $\mathcal{L}$ is replaced by a rank n complex vector bundle $\mathcal{E} \to S_\theta$. Moreover, note that $\pi$ is now totally ramified.
													\end{definition}

												We now come to the main result of this section. Let \textbf{Higgs} be the set of harmonic Higgs bundles $(E,k,A, \theta)$ over a compact Riemannian $Q$ and \textbf{Spec} the set of spectral data $(\pi,\mathcal{L},\widetilde{k},\til{A},\ell)$ on $Q$.

																\begin{proposition} \label{flatspec} \emph{(Spectral correspondence for harmonic Higgs bundles)} There is a bijection:
													
																				\beq \textbf{Higgs} \longleftrightarrow \textbf{Spec} \eeq
																			where harmonic Higgs bundles are taken to be either unramified or totally ramified, and the spectral data is chosen appropriately for each case.
																\end{proposition}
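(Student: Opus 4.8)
The plan is to build the bijection in both directions and verify that the two constructions are mutually inverse, treating the unramified and totally ramified cases in parallel. The crucial simplification afforded by hypothesis (2) is that in both allowed cases the pushforward $\pi_*$ behaves well: in the unramified case $\pi$ is an honest $n$-sheeted covering and $\pi_*\mathcal{L}$ is locally free of rank $n$; in the totally ramified (central) case $\theta_q = \lambda_q \cdot \mf{1}_{E_q}$ everywhere, so $S_\theta$ is the graph of the $1$-form $\lambda = \tfrac{1}{n}\operatorname{tr}\theta \in \Omega^1(Q)$ and $\pi$ is a diffeomorphism onto its image, making $\pi_*\mathcal{E} = \mathcal{E}$ trivially well-defined. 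In either case there is no rank jumping of the spectral sheaf, which is precisely the phenomenon (codimension-two irregular locus $\Delta_\phi$, as in the Remark) that the hypotheses are designed to avoid.

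\textbf{From Higgs to Spec.} Given $(E,k,\nabla,\theta)$ with $[\theta\wedge\theta]=0$, first diagonalize: since the structure group is $GL(n,\C)$ and the three local components $\theta_1^U,\theta_2^U,\theta_3^U$ commute and are (in the generic case) each diagonalizable with simple joint spectrum, the joint eigenspace decomposition $E|_U = \bigoplus_i L_i$ is well-defined and globalizes over $S_\theta$ to the spectral line bundle $\mathcal{L}$ (resp. the rank-$n$ bundle $\mathcal{E}$ in the central case, where $E$ pulls back untouched). The Lagrangian embedding $\ell: S_\theta \embedd T^*Q$ is built from the defining equation $\det(s_q\mf{1}_{E_q}-\theta_q)=0$: I would check that $S_\theta$ is Lagrangian by observing that on each sheet it is locally the graph $dh_i^V$ of the local harmonic potentials produced exactly as in Theorem \ref{teoremito} (using $d_\nabla\theta=0$), and graphs of closed $1$-forms are Lagrangian. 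The metric $\widetilde{k}$ is the restriction of $k$ to each eigenline, and $\til{A}$ is the connection induced on $\mathcal{L}$ by $\nabla$; one checks $\til{A}$ is flat because the $d_\nabla\theta=0$ equation forces the eigenline sub-bundles to be $\nabla$-parallel along $S_\theta$ after pulling back, and the D-term $d_\nabla^{\dagger_k}\theta=0$ together with flatness gives harmonicity of the potentials, hence the stated compatibility. This assembles the tuple $(\pi,\mathcal{L},\widetilde{k},\til{A},\ell)\in\textbf{Spec}$.

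\textbf{From Spec to Higgs and inversion.} For the reverse direction I set $E := \pi_*\mathcal{L}$ (resp. $E := \mathcal{E}$), define $\theta := \pi_*\tau$ where $\tau(q,s) = s\cdot\mf{1}$ is the tautological $1$-form ($\ell$ enters here, since $\tau$ is literally the restriction to $S_\theta$ of the canonical $1$-form on $T^*Q$), take $k := \pi_*\widetilde{k}$ and $\nabla := \pi_*\til{A}$; flatness of $\til{A}$ and the Lagrangian condition on $\ell$ then give $F_\nabla = 0$ and $[\theta\wedge\theta]=0$ and $d_\nabla\theta = 0$, while harmonicity of $\til{A}$ relative to $\widetilde k$ gives $d_\nabla^{\dagger_k}\theta=0$, so $(E,k,\nabla,\theta)\in\textbf{Higgs}$. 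That the two assignments are inverse is then a pointwise statement: over a point $q\notin\Delta_\pi$ (which is all of $Q$ under our hypotheses), it reduces to the linear algebra fact recalled at the start of the section, namely that a diagonalizable $\phi\in End(V)$ is recovered from $(\ul\lambda,\ul L,m)$, combined with the standard fact that $\pi_*$ and ``restrict to fibers'' are mutually inverse for finite unramified covers (resp. trivial in the ramified-to-a-diffeomorphism case).

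\textbf{Main obstacle.} The routine-looking but genuinely delicate point is the smooth (non-algebraic) nature of everything: $\pi_*$ of a bundle along a covering is fine, but verifying that the connection and metric glue correctly across the sheets — and that no monodromy of the covering obstructs the descent of $\til A$ to a well-defined $\nabla$ on $E = \pi_*\mathcal{L}$ — requires the cover to be a genuine covering space, which is exactly why partial ramification is excluded; at a partial ramification point the eigenline bundle would not extend and $\pi_*\mathcal{L}$ would fail to be locally free. So the heart of the argument is checking that, under hypothesis (2), the naive constructions on the complement of $\Delta_\pi$ actually extend over all of $Q$ (trivially in the unramified case since $\Delta_\pi=\emptyset$, and in the totally ramified case because $\pi$ is a diffeomorphism so there is nothing to extend). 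Once that is in place, the correspondence is a formal consequence of the linear spectral picture applied fiberwise and the compatibility of $\pi_*$ with the geometric structures.
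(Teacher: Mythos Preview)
Your proposal is correct and follows essentially the same route as the paper: construct $\mathcal{L}$ as a subbundle of $\pi^*E$ via eigenline decomposition, obtain $\til{k}$ and $\til{A}$ by restriction, verify the Lagrangian condition from $d_\nabla\theta=0$ (the paper does this via an explicit coordinate computation $\omega|_{\ell(Q)}=d\theta=0$ after gauging away $A$, rather than by invoking Theorem~\ref{teoremito}), and then reverse everything via $\pi_*$ using that $\pi$ is a local diffeomorphism. Your treatment of the totally ramified case and the discussion of the obstacle are more explicit than the paper's, but the underlying argument is the same.
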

																
																\begin{proof}

																Given a harmonic Higgs bundle $(E,k,A,\theta)$, we already know how to construct $\pi : S_\theta \to Q$, the Larangian embedding $\ell$ and $\mathcal{L} \to S_\theta$ such that $E = \pi_*\mathcal{L}$. Note that $\mathcal{L}$ is a subbundle of $\pi^*E$, so the metric and flat connection can be defined by pullback and restriction: $\til{k} := \pi^*k|_{\mathcal{L}}$ and\footnote{Note that unlike the case of general connections, one can restrict a flat connection to a sub-bundle simply by considering the subsheaf of locally constant sections taking values in that sub-bundle.} $\til{A} := \pi^*A|_{\mathcal{L}}$.

															Now, use the hamonicity condition on $k : \til{Q} \to G/K$ to identify (locally) $\theta=dk$. The condition $\nabla_A\theta = d\theta + A\wedge\theta = 0$ can be written as equations for the components of $\theta$ under the identification. Since $\nabla_A^2=0$, we can locally gauge away $A$, so that that the equations become $d\theta=0$.  Let $(x_i,y_i)$ be coordinates in $T^*Q$ such that the standard symplectic form is $\omega=\sum dy_i\wedge dx_i$ and dualize $\theta=\sum\theta_i(x)dx_i$ via the semi-flat metric on $T^*Q$ to obtain $\theta'=\sum\theta_i(x)dy_i$. The embedding given by $\ell(q,s_q) = \det(s_q\otimes \mf{1}_E - \theta_q)$ is Lagrangian if and only if:
																
																\beq \omega|_{\ell(Q)} = \sum_{i=1}^3 d\theta' \wedge dx_i = \sum_{i=1}^3 d(\theta_idx_i) = d\theta = 0 \eeq

															Conversely, given spectral data $(\pi, \mathcal{L},\til{k},\til{A},\ell)$, one constructs $E$ and $\theta$ as usual, and $k = \pi_*\til{k}$, $A=\pi_*\til{A}$ are well-defined because $\pi$ is a local diffeomorphism. The conditions $\nabla_A^2=0$ and $\nabla_Ah=0$ follow from the same conditions for $(\til{h},\til{A})$. Finally, the condition $\nabla_A\theta=0$ is obtained simply by reversing the above argument for the section $\ell$ to be Lagrangian.
																
																\end{proof}
													

												\begin{definition} \label{hit}
											For a general choice of complex reductive Lie group $G_c$ with $r = \text{rank}(G_c)$, a choice of basis of $Ad$-invariant polynomials $\left\{ p_1,\ldots,p_r \right\}$ of degrees $d_1,\ldots,d_r$ in $\mathfrak{g}_c$ defines a \emph{Hitchin map}:
													
											\begin{align*} \mathscr{H} : \textbf{Higgs} & \to \bigoplus_{i=1}^r \mathcal{C}^\infty(Q,\text{Sym}^{d_i}(\slashed{S}_Q)^\R) \\
																						 (E,h,A,\theta) & \mapsto (p_1(\theta),\ldots,p_r(\theta)) \end{align*}
													\end{definition}
												
												In \cite{barb} we will study the Hitchin map in a specific example related to the mirror $X^\vee$ of $X=T^*\G_6$, where we will show that $\mathscr{H}$ can be used to define a smooth model of the SYZ fibration of $X^\vee$. 
												
												For now, we observe two differences from the holomorphic category: one, $\mathscr{H}$ is not surjective in general; and two, even when restricted to its image, $\mathscr{H}$ will not define an integrable system structure, since character varieties of compact three-manifolds do not come equipped with symplectic structures.

\section{Associatives, Ramified Higgs Fields and Isolated Singularities} \label{remakedaremarca}

Our main point in this work is that the spectral/cameral cover approach provides a conceptual framework to study the deformation theory of coassociative fibrations. In line with this idea, we will now suggest that two seemingly different proposals \cite{danonao}, \cite{joycekarigiannis} for constructing isolated (i.e. codimension-seven) singularities on $G_2$-manifolds both fall under the umbrella of \emph{ramified Higgs fields}. 

We will start by explaining the relation between our construction and the work of Joyce and Karigiannis \cite{joycekarigiannis}. Their paper shows how to construct \emph{integrable} $G_2$-structures on a $G_2$-orbifold $M \to Q$ of type $A_1$. The essential feature in their argument is the presence of a closed and co-closed (possibly $\Z_2$-twisted) \emph{non-vanishing} one-form $\theta_{JK}$ on the singular stratum $Q$. In the untwisted setting, the existence of $\theta_{JK}$ is equivalent to the condition that $Q$ fibers over the circle \cite{tischler} - a strong restriction on its topology. In the twisted case, it is equivalent to a $\Z_2$-cover of $Q$ fibering over $\sph^1$.

This picture is consistent with our construction. The Joyce-Karigiannis $\Z_2$-twisted one-form $\theta_{JK}$ is really a commuting $SU(2)$-Higgs field on $Q$, since $\mathfrak{h}_{SU(2)} = \mathfrak{u}(1) \cong \R$ so sections of $\mathcal{E}_W = T^*Q\otimes\mathfrak{u}(1)/\Z_2$ are exactly the $\Z_2$-twisted forms. The closed and co-closed conditions are just the last two equations in \ref{pwequations}.

Note that $\theta_{JK}$ is required to be non-vanishing. It was suggested in \cite{joycekarigiannis} that an isolated vanishing point should lead to an isolated conical singularity. In our language, ``vanishing'' means that there is a point in $Q$ where the two sheets of the spectral cover intersect. In other words, the Higgs field is ramified at that point. 

Now, to relate with the proposal in \cite{danonao}, we recall that a spectral cover $\widetilde{S}_\theta \to Q$ is determined by a Higgs field $\theta$, which is related to a positive branched section $h_\theta$ encoding the $G_2$-structure of the total space. The crucial point is equation \ref{duuuh} above, which says that \emph{locally}, the sheets of $\widetilde{S}_\theta \subset T^*Q$ are identified with the graph of $dh_\theta$. In particular, the spectral cover is Lagrangian in $T^*Q$. Now, recall that in the setup of \cite{donaldson} the ramifying locus of $h_\theta$ is a knot or link $K \subset Q$ over which the coassociative fibers acquire conical singularities. In our picture, $K$ is a subset where some sheets of $\widetilde{S}_\theta$ intersect. According to \cite{donaldson} it is expected that, at least in the so-called adiabatic limit, given two different points $x, y \in K$ one could associate to $h_\theta$ a certain ``matching gradient path'' $\gamma : [0,1] \to Q$ connecting $\gamma(0)=x$ and $\gamma(1)=y$, such that the union of vanishing cycles over $\gamma$  is an associative $\sph^3$. Deforming $K$ along such a path would bring $x$ and $y$ together, creating a point-like singularity. Geometrically, this picture describes a ``collision'' of two singular Kovalev-Lefschetz fibers producing an isolated singularity via a ``blow-down'' of the associative $\sph^3$.

The interpretation in the spectral cover picture is the following: if a matching gradient path between $x$ and $y$ exists, then \emph{$\widetilde{S}_\theta$ has non-trivial ramification \textbf{of the same order} at $x$ and $y$}. Here, the matching condition guarantees that vanishing cycles match bijectively between neighborhoods of $x$ and $y$. Notice that any subset of sheets only need to intersect at one of the points, since $h_\theta\circ \gamma$ can interpolate between components midway along $\gamma$ and thus match vanishing cycles from different components. 

Assuming for simplicity that $\theta$ is unramified on an open neighborhood $U$ of $\gamma((0,1))$, we can unambiguosly label the different sheets of $\widetilde{S}_\theta$ over $U$; call $\mathfrak{S}$ the set of such sheets. Assume also that the ramification loci over $x$ and $y$ consists of single points. For $p \in \overline{U}$, let $\textbf{Ram}_p$ denote the subset of $\mathfrak{S}$ of sheets containing a ramification point over $p$. Finally, assume that $\textbf{Ram}_x \neq \textbf{Ram}_y$. Then, if there is a deformation of $(K,h_\theta)$ bringing $x$ and $y$ together, these sheets come together in the limit point forming an enhanced singularity, described by the enhanced ramification of the deformed $\theta$ at the limit point. Clearly, this setup cannot be engineered with group $SU(2)$, since the only non-regular element of $\mathfrak{su}(2)$ is $0$.

Although most of this discussion is non-rigorous, our main point here is that this establishes a conceptual relation between the constructions in \cite{joycekarigiannis} and \cite{donaldson}, and this is our proposed answer to question $(8.\text{v})$ in the first paper. We should also mention that generically, one would expect the spectral cover to ramify at finitely many points, so isolated singularities arising from collision of links seem to be rather special from this perspective.

In \cite{galerona} we provided an explicit local construction of a ramified Higgs field arising from collision of links (see equations $(3.44)$ and $(3.45)$ in that paper). The example is in $\R^3$ with gauge group $SU(3)$. The spectral cover ramifies over the x-axis and y-axis and vanishes at the origin. In the $G_2$-picture, this is expected to describe a collision of two $A_1$-singularities.

Another interesting feature of the spectral cover interpretation is a possible Floer-theoretic description of associative spheres. Assuming that in the adiabatic limit all associatives can be obtained through matching vanishing cycles as we outlined, one could attempt to count them by looking at the differential in the Morse-Novikov cohomology generated by the critical points. From the spectral cover perspective, one looks at the Lagrangian Floer cohomology $HF^\bullet(Q,\theta(Q))$, i.e. one should count $J$-holomorphic disks bounded by sheets of the spectral cover. Since the Morse-Novikov complex computes Lagrangian Floer cohomology of $T^*Q$, the two counts must agree. We recall that Joyce's proposed definition of ``$G_2$-quantum cohomology'' \cite{qcoh} relies on the strong similarities between counting associative spheres and counting $J$-holomorphic discs bounded by Lagrangians. The upshot of our discussion is that for  coassociative ALE fibrations, the two constructions are not merely similar but \emph{exactly equal}. Hence, the quantum geometry of this class of $G_2$-manifolds can be probed with existing techniques, and as such they provide a good testing ground for Joyce's conjectures.

\section{Future Directions}

\begin{enumerate}

\item \textbf{$G_2$-metrics:} The most natural extension of this work would be to identify which fibers of the deformation family have \emph{integrable} $G_2$-structures. The proposal of Donaldson \cite{donaldson} to attack the torsion-free condition consists in solving the adiabatic limit and then perturbing the solution via a formal power series. The adiabatic solution consists of a spectral cover stationary under mean curvature flow. It would be interesting to understand what the perturbations mean geometrically.

\item \textbf{Higgs Bundles and Branched Covers:} A theorem of Hilden-Montesinos \cite{monte} says that every closed orientable 3-manifold is a 3-fold branched cover over $\sph^3$ with branched set over some knot $K$. It would be interesting to understand the behavior of harmonic Higgs fields under pullbacks/pushforwards by such covering maps. Once ramified spectral data is well-understood in this context, this could be a potential source of several interesting examples.

Related to this, consider the following: it is known that $\G_6$ is a twofold cover of $\sph^3$ branched over the Borromean rings $L$. Is there a spectral cover profile that coincides with this covering? The author believes this question is related to possible topological transitions between $G_2$-spaces \cite{aganagic}. Essentially, the idea is that if one could construct a coassociative $A_1$-fibration over the flat Borromean orbifold $(\sph^3,L)$ with $G_2$-structure $\varphi$, then a deformation of the branching locus $L$ induces a deformation of $\varphi$ itself. One could imagine deforming $L$ into a wedge of three circles such that the Higgs field becomes ill-defined at the center, corresponding to an isolated conical singularity. We then deform $L$ again, but now the three unknots are fully linked. The new spectral cover should correspond to the homology sphere $\sph^3/Q_8$, where $Q_8$ is the quaternion group. The spectral cover has transitioned from a flat to a spherical geometry. We will provide further evidence for this transition in \cite{barb}.

\item \textbf{Derived Integrable Systems?} We started this work by claiming our main construction is a $G_2$ analogue of the works of Szendr\H{o}i \cite{szendroi} and Diaconescu-Donagi-Pantev \cite{ddp} on families of ADE-fibered Calabi-Yau threefolds. However, the result of \cite{ddp} is much stronger: there is an isomorphism of \emph{integrable systems} between the Jacobian fibration of the Calabi-Yau family and the dual Hitchin fibration. The integrable system structure is encoded in the geometry of the base of the fibrations. One way to state the theorem is to say that the two bases are isomorphic special K\"ahler manifolds. Note that a crucial reason this is possible is the fact that the Hitchin moduli space is hyperk\"ahler: besides its natural K\"ahler structure, there is a second K\"ahler structure (with respect to a different complex structure) corresponding to the character variety of the base Riemann surface. 

In our setup, the same statement is hopeless, since character varieties of three-manifolds are not symplectic. However, according to \cite{ptvv}, they admit \emph{$(-1)$-shifted symplectic structures}, a generalization of symplectic structures in the context of derived geometry. Moreover, in the case of compact $G_2$-manifolds, Karigiannis and Leung \cite{leung} prove that $\moduli^\C_{G_2}$ is K\"ahler and admits a Lagrangian fibration over an \emph{affine Hessian manifold} (a real version of special K\"ahler). One is then led to ponder if there is a statement in derived geometry that would relate the K\"ahler and shifted symplectic structures, effectively completing our analogy.

\end{enumerate}


\begin{thebibliography}{99}


\bibitem[Ach02]{modulifixing} B. Acharya, \emph{A Moduli Fixing Mechanism in M theory}, arXiv:hep-th/0212294.

\bibitem[Ach98]{acharya} B. Acharya, \emph{M theory, Joyce Orbifolds and Super Yang-Mills}, Adv. Theor. Math. Phys. 3 (1999) 227-248.

\bibitem[Ach00]{ach} B. Acharya, \emph{On Realising N$=1$ Super Yang-Mills in M theory}, arXiv:hep-th/0011089.

\bibitem[AW01]{acharyawitten} B. Acharya, E. Witten, \emph{Chiral Fermions from Manifolds of G2 Holonomy}, arXiv:hep-th/0109152

\bibitem[AV01]{aganagic} M. Aganagic, C. Vafa, \emph{Mirror Symmetry and a $G_2$-flop}, JHEP 0305 (2003) 061.



\bibitem[AW01a]{atiyahwitten} M. Atiyah, E. Witten, \emph{M-theory Dynamics on a Manifold of $G_2$ Holonomy}, Advances in Theoretical and Mathematical Physics, 6 (1) (2001) pp. 1-106. 


\bibitem[BCH+19]{galerona} R. Barbosa, M. \v{C}vetic, J. Heckman, C. Lawrie, E. Torres, G. Zoccarato, \emph{T-branes and $G_2$ Backgrounds}, Phys. Rev. D 101, 026015 (2020)

\bibitem[Bar]{barb} R. Barbosa, \emph{Mirror Symmetry and Harmonic Spectral Covers}, in preparation.







\bibitem[BFM02]{borel} A. Borel, R. Friedman, J. Morgan, \emph{Almost Commuting Elements in Compact Lie Groups}, Mem. Amer. Math. Soc. 157 (2002), no. 747.


\bibitem[BCHS18]{sakura} A. Braun, S. Cizel, M. Hubner, S. Sch\"afer-Nameki, \emph{Higgs Bundles for M-theory on $G_2$-manifolds}, arXiv:1812.06072 [hep-th]


\bibitem[Bri68]{brieskorn} E. Brieskorn, \emph{Die Auflosung der rationalen Singularitaten holomorphen Abbildungen}, Math. Ann. 178 (1968), 255—270. 



\bibitem[CR03]{conway} J. Conway, J. Rossetti, \emph{Describing the Platycosms}, arXiv:math/0311476 [math.DG].

\bibitem[Cor88]{corlette} K. Corlette, \emph{Flat $G$-bundles with Canonical Metrics}, J. Differential Geom. 28 (1988), no. 3.




\bibitem[DDD+06]{hofman} D.-E. Diaconescu, R. Donagi, R. Dijkgraaf, C. Hofman, T. Pantev, \emph{Geometric Transitions and Integrable Systems}, Nucl.Phys. B752 (2006) 329-390.


\bibitem[DDP06]{ddp} D.-E. Diaconescu, R. Donagi, T. Pantev, \emph{Intermediate Jacobians and ADE Hitchin Systems}, Math. Res. Lett. 14 (2007), no. 5, 745–756.


\bibitem[DG02]{gaitsgory} R. Donagi, D. Gaitsgory, \emph{The Gerbe of Higgs Bundles}, Transform. Groups 7 (2002) 109-153


\bibitem[Don16]{donaldson} S. Donaldson, \emph{Adiabatic Limits of Co-associative Kovalev-Lefschetz Fibrations}, Algebra, geometry, and physics in the 21st century, 1–29, 
Progr. Math., 324, Birkhäuser/Springer, Cham, (2017).


\bibitem[Don19]{danonao} S. Donaldson, \emph{$G_2$ Geometry and Adiabatic Limits}, Talk at the Simons Foundation, September 2019.

\bibitem[Don87]{pato} S. Donaldson, \emph{Twisted Harmonic Maps and the Self-duality Equations}, Proc. London Math. Soc. (3) 55 (1987).


\bibitem[FL14]{florentino} C. Florentino, S. Lawton, \emph{Topology of Character Varieties of Abelian Groups}, Topology Appl. 173 (2014), 32–58.





\bibitem[HHM04]{mazzeo} T. Hausel, E. Hunsicker, R. Mazzeo, \emph{Hodge Cohomology of Gravitational Instantons}, Duke Math. J., 122 (2004), 485-548.




\bibitem[HKLR87]{hitchinkarlhedelindstromrocek} N. Hitchin, A. Karlhede, U. Lindstr\"om, M. Ro\v{c}ek, \emph{Hyper-K\"ahler Metrics and Supersymmetry}, Comm. Math. Phys. Volume 108, Number 4 (1987), 535-589.

\bibitem[Joy00]{joyce} D. Joyce, \emph{Compact Manifolds with Special Holonomy}, Oxford University Press (2000).

\bibitem[Joy16]{qcoh} D. Joyce, \emph{Conjectures on Counting Associative $3$-folds in $G_2$-manifolds}, arXiv:1610.09836 [math.DG]


\bibitem[JK17]{joycekarigiannis} D. Joyce, S. Karigiannis, \emph{A New Construction of Compact Torsion-free $G_2$-manifolds by Gluing Families of Eguchi–Hanson Spaces}, arXiv:1707.09325 [math.DG]

\bibitem[KS99]{kac} V. Kac, A. Smilga, \emph{Vacuum Structure in Supersymmetric Yang-Mills Theories with any Gauge Group}, The many faces of the superworld, 185–234, World Sci. Publ., River Edge, NJ (2000). 


\bibitem[KL07]{leung} S. Karigiannis, N. Leung, \emph{Hodge Theory for $G_2$-manifolds: Intermediate Jacobians and Abel-Jacobi Maps}, Proc. Lond. Math. Soc. (3) 99 (2009), no. 2, 297–325.

\bibitem[KN79]{kempfness} G. Kempf, L. Ness, \emph{The Length of Vectors in Representation Spaces}, Algebraic geometry (Proc. Summer Meeting, Univ. Copenhagen, Copenhagen, 1978), Lecture Notes in Math., 732, Berlin, New York: Springer-Verlag, pp. 233–243

\bibitem[Kov05]{kovalev} A. Kovalev, \emph{Coassociative $K3$ Fibrations of Compact $G_2$-Manifolds}, arXiv:math/0511150 [math.DG]


\bibitem[Kro89]{kronheimer2} P. Kronheimer, \emph{A Torelli-type Theorem for Gravitational Instantons}, J. Differential Geom. Volume 29, Number 3 (1989), 685-697.

\bibitem[Kro89a]{kronheimer1} P. Kronheimer, \emph{The Construction of ALE Spaces as Hyper-K\"ahler Quotients}, J. Differential Geom. Volume 29, Number 3 (1989), 665-683.





\bibitem[Loo84]{looijenga} E. Looijenga, \emph{Isolated Singular Points on Complete Intersections}, London Math. Soc. Lecture Note Series 77, Cambridge University Press (1984).


\bibitem[Mon74]{monte} J. Montesinos, \emph{A Representation of Closed, Orientable 3-manifolds as 3-fold Branched Coverings of $S^3$}, Bull. Amer. Math. Soc. 80 (1974), 845–846.



\bibitem[PTVV11]{ptvv} T. Pantev, B. Toen, M. Vaquie, G. Vezzosi, \emph{Shifted Symplectic Structures}, arXiv:1111.3209 [math.AG]

\bibitem[PW11]{pw} T. Pantev, M. Wijnholt, \emph{Hitchin's Equations and $M$-Theory Phenomenology}, J. Geom. Phys. 61 (2011), no. 7, 1223–1247. 




\bibitem[Scz12]{szczepanski} A. Sczcepa\'nski, \emph{Geometry of Crystallographic Groups}, Vol. 4, Series Algebra and Discrete Mathematics, World Scientific (2012).

\bibitem[Sen97]{sen} A. Sen, \emph{A Note on Enhanced Gauge Symmetries in M- and String Theory}, JHEP 9709:001 (1997) .

\bibitem[Sik12]{sikora} A. Sikora, \emph{Character Varieties}, Trans. Amer. Math. Soc. 364 (2012), 5173-5208.

\bibitem[Sik13]{sikorabeliano} A. Sikora, \emph{Character Varieties of Abelian Groups}, Math. Z. 277 (2014), no. 1-2, 241–256.


\bibitem[Slo80]{slodowy} P. Slodowy, \emph{Simple Singularities and Simple Algebraic Groups}, Springer-Verlag Berlin Heidelberg (1980).


\bibitem[Ste65]{steinberg} R. Steinberg, \emph{Regular Elements of Semisimple Algebraic Groups}, Inst. Hautes Etudes Sci. Publ. Math. 25 (1965), 49-80.

\bibitem[Sze04]{szendroi} B. Szendr\H{o}i, \emph{Artin Group Actions on Derived Categories of Threefolds}, J. reine angew. Math. 572 (2004), 139—166.




\bibitem[Tha01]{thaddeus} M. Thaddeus, \emph{Mirror Symmetry, Langlands Duality, and Commuting Elements of Lie Groups}, Internat. Math. Res. Notices 2001, no. 22, 1169–1193. 

\bibitem[Tis70]{tischler} D. Tischler,  \emph{On Fibering Certain Foliated Manifolds Over $S^1$}, Topology, 9 (1970), 153-154.

\bibitem[Tju70]{tjurina} G. Tjurina, \emph{Resolution of Singularities of Plane Deformations of Double Rational Points}, Funct. Anal. Appl., 4:1 (1970), 68–73.





\end{thebibliography}
\end{document}